\def\dom{{\rm dom \,}}
\def\beq{\begin{equation}}
\def\eeq{\end{equation}}
\def\E{\mathbb{E}}
\def\R{\mathbb{R}}
\newtheorem{assumption}{Assumption}
\newtheorem{example}{Example}
\def\ba{\begin{array}}
\def\ea{\end{array}}
\def\beann{\begin{eqnarray*}}
\def\eeann{\end{eqnarray*}}
\def\bea{\begin{eqnarray}}
\def\eea{\end{eqnarray}}
\def\BT{\begin{theorem}}
\def\ET{\end{theorem}}
\def\BL{\begin{lemma}}
\def\EL{\end{lemma}}
\def\BC{\begin{corollary}}
\def\EC{\end{corollary}}
\def\BE{\begin{example}}
\def\EE{\end{example}}
\def\BD{\begin{definition}}
\def\ED{\end{definition}}
\def\BR{\begin{remark}}
\def\ER{\end{remark}}
\def\BAS{\begin{assumption}}
\def\EAS{\end{assumption}}
\def\BI{\begin{itemize}}
\def\EI{\end{itemize}}
\def\BMP{\begin{minipage}{9.5cm}}
\def\EMP{\end{minipage}}
\def\MPT{\begin{minipage}{11.5cm}}
\def\EPT{\end{minipage}}
\def\la{\langle}
\def\ra{\rangle}
\theoremstyle{plain}
\newtheorem{theorem}{Theorem}[section]
\newtheorem{corollary}[theorem]{Corollary}
\newtheorem{lemma}[theorem]{Lemma}
\theoremstyle{definition}
\newtheorem{definition}{Definition}
\theoremstyle{remark}
\newtheorem{remark}{Remark}
\begin{document}



\title{Tensor Methods for Finding Approximate Stationary Points of Convex Functions}

\author{
\name{G.N. Grapiglia\textsuperscript{a}$^{\ast}$\thanks{$^\ast$Corresponding author. Email: grapiglia@ufpr.br}, Yu. Nesterov\textsuperscript{b}}
\affil{\textsuperscript{a}Departamento de Matem\'atica, Universidade Federal do Paran\'a, Centro Polit\'ecnico, Cx. postal 19.081, 81531-980, Curitiba, Paran\'a, Brazil;\\
\textsuperscript{b}Center for Operations Research and Econometrics (CORE), Catholic University of Louvain
(UCL), 34 voie du Roman Pays, 1348 Louvain-la-Neuve, Belgium}
\received{August 18, 2020}
}

\maketitle

\begin{abstract}
In this paper we consider the problem of finding $\epsilon$-approximate stationary points of convex functions that are $p$-times differentiable with $\nu$-H\"{o}lder continuous $p$th derivatives. We present tensor methods with and without acceleration. Specifically, we show that the non-accelerated schemes take at most $\mathcal{O}\left(\epsilon^{-1/(p+\nu-1)}\right)$ iterations to reduce the norm of the gradient of the objective below a given $\epsilon\in (0,1)$. For accelerated tensor schemes we establish improved complexity bounds of $\mathcal{O}\left(\epsilon^{-(p+\nu)/[(p+\nu-1)(p+\nu+1)]}\right)$ and $\mathcal{O}\left(|\log(\epsilon)|\epsilon^{-1/(p+\nu)}\right)$, when the H\"{o}lder parameter $\nu\in [0,1]$ is known. For the case in which $\nu$ is unknown, we obtain a bound of  $\mathcal{O}\left(\epsilon^{-(p+1)/[(p+\nu-1)(p+2)]}\right)$ for a universal accelerated scheme. Finally, we also obtain a lower complexity bound of $\mathcal{O}\left(\epsilon^{-2/[3(p+\nu)-2]}\right)$ for finding $\epsilon$-approximate stationary points using $p$-order tensor methods.
\end{abstract}

\begin{keywords}
unconstrained minimization; high-order
methods, tensor methods; H\"older condition; worst-case complexity
\end{keywords}

\begin{classcode}49M15; 49M37; 58C15; 90C25; 90C30
\end{classcode}

\section{Introduction}
\setcounter{equation}{0}

\subsection{Motivation}

In this paper we study tensor methods for unconstrained optimization, i.e., methods in which the iterates are obtained by the (approximate) minimization of models defined from high-order Taylor approximations of the objective function. This type of methods is not new in the Optimization literature (see, e.g., \cite{SCH,BOU,BAES}). Recently, the interest for tensor methods has been renewed by the work in \cite{Birgin}, where $p$-order tensor methods were proposed for unconstrained minimization of \textit{nonconvex functions} with Lipschitz continuous $p$th derivatives. It was shown that these methods take at most $\mathcal{O}\left(\epsilon^{-\frac{p+1}{p}}\right)$ iterations to find an $\epsilon$-approximate first order stationary point of the objective, generalizing the bound of $\mathcal{O}\left(\epsilon^{-3/2}\right)$, originally established in \cite{NP} for the Cubic Regularization of Newton's Method ($p=2$). After \cite{Birgin}, several high-order methods have been proposed and analyzed for nonconvex optimization (see, e.g., \cite{CGT2,CHEN1,CHEN2,Mart}), resulting even in worst-case complexity bounds for the number of iterations that $p$-order methods need to generate approximate $q$th order stationary points \cite{CGT1,CGT3}.

More recently, in \cite{NES6}, $p$-order tensor methods with and without acceleration were proposed for unconstrained minimization of \textit{convex functions} with Lipschitz continuous $p$th derivatives. As it is usual in Convex Optimization, these methods aim the generation of a point $\bar{x}$ such that $f(\bar{x})-f^{*}\leq\epsilon$, where $f$ is the objective function, $f^{*}$ is its optimal value and $\epsilon>0$ is a given precision. Specifically, it was shown that the non-accelerated scheme takes at most $\mathcal{O}(\epsilon^{-1/p})$ iterations to reduce the functional residual below a given $\epsilon>0$, while the accelerated scheme takes at most $\mathcal{O}(\epsilon^{-1/(p+1)})$ iterations to accomplish the same task. Auxiliary problems in these methods consist in the minimization of a $(p+1)$-regularization of the $p$th order Taylor approximation of the objective, which is a multivariate polynomial. A remarkable result shown in \cite{NES6} (which distinguish this work from \cite{BAES}) is that, in the convex case, the auxiliary problems in tensor methods become convex when the corresponding regularization parameter is sufficiently large. Since \cite{NES6}, several high-order methods have been proposed for convex optimization (see, e.g., \cite{DOI2,DOI3,GN3,JIA}), including near-optimal methods \cite{BUB,GAS,JIA1,NES10,NES11} motivated by the second-order method in \cite{MS}. In particular, in \cite{GN3}, we have adapted and generalized the methods in \cite{GN,GN2,NES6} to handle convex functions with $\nu$-H\"{o}lder continuous $p$th derivatives. It was shown that the non-accelerated schemes take at most $\mathcal{O}(\epsilon^{-1/(p+\nu-1)})$ iterations to generate a point with functional residual smaller than a given $\epsilon\in (0,1)$, while the accelerated variants take only $\mathcal{O}(\epsilon^{-1/(p+\nu)})$ iterations when the parameter $\nu$ is explicitly used in the scheme. For the case in which $\nu$ is not known, we also proposed a universal accelerated scheme for which we established an iteration complexity bound of $\mathcal{O}(\epsilon^{-p/[(p+1)(p+\nu-1)]})$.

As a natural development, in this paper we present variants of the $p$-order methods ($p\geq 2$) proposed in \cite{GN3} that aim the generation of a point $\bar{x}$ such that $\|\nabla f(\bar{x})\|_{*}\leq\epsilon$, for a given threshold $\epsilon\in (0,1)$. In the context of nonconvex optimization, finding approximate stationary points is usually the best one can expect from local optimization methods. In the context of convex optimization, one motivation to search for approximate stationary points is the fact that the norm of the gradient may serve as a measure of feasibility and optimality when one applies the dual approach for solving constrained convex problems (see, e.g., \cite{NES8}). Another motivation comes from the inexact high-order proximal-point methods, recently proposed in \cite{NES10,NES11}, in which the iterates are computed as approximate stationary points of uniformly convex models.

Specifically, our contributions are the following:
\begin{enumerate}
\item We show that the non-accelerated schemes in \cite{GN3} take at most $\mathcal{O}\left(\epsilon^{-1/(p+\nu-1)}\right)$ iterations to reduce the norm of the gradient of the objective below a given $\epsilon\in (0,1)$, when the objective is convex, and $\mathcal{O}\left(\epsilon^{-(p+\nu)/(p+\nu-1)}\right)$ iterations, when $f$ is nonconvex. These complexity bounds extend our previous results reported in \cite{GN} for regularized Newton methods (case $p=2$). Moreover, our complexity bound for the nonconvex case agrees in order with the bounds obtained in \cite{Mart} and \cite{CGT2} for different tensor methods.
\item[]
\item For accelerated tensor schemes we establish improved complexity bounds of $\mathcal{O}\left(\epsilon^{-(p+\nu)/[(p+\nu-1)(p+\nu+1)]}\right)$, when the H\"{o}lder parameter $\nu\in [0,1]$ is known. This result generalizes the bound of $\mathcal{O}\left(\epsilon^{-2/3}\right)$ obtained in \cite{NES8} for the accelerated gradient method ($p=\nu=1$). In contrast, when $\nu$ is unknown, we prove a bound of  $\mathcal{O}\left(\epsilon^{-(p+1)/[(p+\nu-1)(p+2)]}\right)$ for a universal accelerated scheme.
\item[]
\item For the case in which $\nu$ and the corresponding H\"{o}lder constant are known, we propose tensor schemes for the composite minimization problem. In particular, we prove a bound of $\mathcal{O}\left(R^{\frac{p+\nu-1}{p+\nu}}|\log_{2}(\epsilon)|\epsilon^{-\frac{1}{p+\nu}}\right)$ iterations, where $R$ is an upper bound for the initial distance to the optimal set. This result generalizes the bounds obtained in \cite{NES8} for first-order and second-order accelerated schemes combined with a regularization approach ($p=1,2$ and $\nu=1$). We also prove a bound of $\mathcal{O}\left(S^{\frac{p+\nu-1}{p+\nu}}|\log_{2}(\epsilon)|\epsilon^{-1}\right)$ iterations, where $S$ is an upper bound for the initial functional residual.
\item[]
\item Considering the same class of difficult functions described in \cite{GN3}, we derive a lower complexity bound of $\mathcal{O}\left(\epsilon^{-2/[3(p+\nu)-2]}\right)$ iterations (in terms of the initial distance to the optimal set), and a lower complexity bound of $\mathcal{O}\left(\epsilon^{-2(p+\nu)/[3(p+\nu)-2]}\right)$ iterations (in terms of the initial functional residual), for $p$-order tensor methods to find $\epsilon$-approximate stationary points of convex functions with $\nu$-H\"{o}lder continuous $p$th derivatives. These bounds generalize the corresponding bounds given in \cite{CARMON} for first-order methods ($p=\nu=1$).
\end{enumerate}

The paper is organized as follows. In section 2, we define our problem. In section 3, we present complexity results for tensor schemes without acceleration. In section 4, we present complexity results for accelerated schemes. In section 5 we analyze tensor schemes for the composite minimization problem. Finally, in section 6, we establish lower complexity bounds for tensor methods find $\epsilon$-approximate stationary points of convex functions under the H\"{o}lder condition. Some auxiliary results are left in the Appendix.

\subsection{Notations and Generalities}

Let $\E$ be a finite-dimensional
real vector space, and $\E^*$ be its {\em dual} space. 
We denote by $\la s, x \ra$ the value of the linear functional 
$s \in \E^*$ at point $x \in \E$. Spaces $\E$ and $\E^{*}$ are
equipped with conjugate Euclidean norms:
\begin{equation}
\| x \| = \la B x, x \ra^{1/2},\; x\in \E, \quad \| s
\|_* \; = \; \la s, B^{-1} s \ra^{1/2}, \; s \in \E^*.
\label{eq:0.1}
\end{equation}
where  $B:\E \to \E^*$ is a self-adjoint positive definite operator ($B \succ 0$).
For a smooth function $f:\E\to \R$, denote by $\nabla f(x)$
its gradient, and by $\nabla^{2}f(x)$ its Hessian evaluated at 
point $x\in\E$. Then $\nabla f(x)\in\E^{*}$ and 
$\nabla^{2}f(x)h\in\E^{*}$ for $x,h\in\E$.

For any integer $p\geq 1$, denote by
\begin{equation*}
D^{p}f(x)[h_{1},\ldots,h_{p}]
\end{equation*} 
the directional derivative of function $f$ at $x$ along directions
$h_{i}\in\E$, $i=1,\ldots,p$. For any $x\in\text{dom}\,f$ and $h_{1},h_{2}\in\E$ we have
\begin{equation*}
Df(x)[h_{1}]=\langle\nabla f(x),h_{1}\rangle\quad\text{and}\quad D^{2}f(x)[h_{1},h_{2}]=\langle\nabla^{2}f(x)h_{1},h_{2}\rangle.
\end{equation*}
If $h_{1}=\ldots=h_{p}=h\in\E$, we denote $D^{p}f(x)[h_{1},\ldots,h_{p}]$ by $D^{p}f(x)[h]^{p}$. Using this notation, the $p$th order Taylor approximation of function $f$ at $x\in\E$ can be written as follows:
\begin{equation}
f(x+h)=\Phi_{x,p}(x+h)+o(\|h\|^{p}),
\label{eq:1.1}
\end{equation}
where
\begin{equation}
\Phi_{x,p}(y)\equiv f(x)+\sum_{i=1}^{p}\dfrac{1}{i!}D^{i}f(x)[y-x]^{i},\,\,y\in\E.
\label{eq:1.2}
\end{equation}
Since $D^{p}f(x)[\,.\,]$ is a 
symmetric $p$-linear form, its norm is defined as:
\begin{equation*}
\|D^{p}f(x)\|=\max_{h_{1},\ldots,h_{p}}\left\{\left|D^{p}f(x)[h_{1},\ldots,h_{p}]\right|\,:\,\|h_{i}\|\leq 1,\,i=1,\ldots,p\right\}.
\end{equation*}
It can be shown that (see, e.g., Appendix 1 in \cite{NES7})
\begin{equation*}
\|D^{p}f(x)\|=\max_{h}\left\{\left|D^{p}f(x)[h]^{p}\right|\,:\,\|h\|\leq 1\right\}.
\end{equation*}
Similarly, since $D^{p}f(x)[.\,,\ldots,\,.]-D^{p}f(y)[.,\ldots,.]$ is also a symmetric $p$-linear form for fixed $x,y\in\E$, it follows that
 \begin{equation*}
\|D^{p}f(x)-D^{p}f(y)\|=\max_{h}\left\{\left|D^{p}f(x)[h]^{p}-D^{p}f(y)[h]^{p}\right|\,:\,\|h\|\leq 1\right\}.
\end{equation*}

\section{Problem Statement}

In this paper we consider methods for solving the following minimization problem
\begin{equation}
\min_{x\in\E}\,f(x),
\label{eq:2.1}
\end{equation}
where $f:\E\to\R$ is a convex function $p$-times differentiable. We assume that (\ref{eq:2.1}) has at least one optimal solution $x^{*}\in\E$. As in \cite{GN3}, the level of smoothness of the objective $f$ will be characterized by the family of H\"{o}lder constants
\begin{equation}
H_{f,p}(\nu)\equiv\sup_{x,y\in\E}\left\{\dfrac{\|D^{p}f(x)-D^{p}f(y)\|}{\|x-y\|^{\nu}}\right\},\,\,0\leq \nu\leq 1.
\label{eq:2.3}
\end{equation}
From (\ref{eq:2.3}), it can be shown that, for all $x,y\in\E$, 
\begin{equation}
|f(y)-\Phi_{x,p}(y)|\leq\dfrac{H_{f,p}(\nu)}{p!}\|y-x\|^{p+\nu},
\label{eq:2.4}
\end{equation}
\begin{equation}
\|\nabla f(y)-\nabla\Phi_{x,p}(y)\|_{*}\leq\dfrac{H_{f,p}(\nu)}{(p-1)!}\|y-x\|^{p+\nu-1},
\label{eq:2.5}
\end{equation}
and
\begin{equation}
\|\nabla^{2}f(y)-\nabla^{2}\Phi_{x,p}(y)\|_{*}\leq\dfrac{H_{f,p}(\nu)}{(p-2)!}\|y-x\|^{p+\nu-2}.
\label{eq:ale1}
\end{equation}
Given $x\in\E$, if $H_{f,p}(\nu)<+\infty$ and $H\geq H_{f,p}(\nu)$, by (\ref{eq:2.4}) we have
\begin{equation}
f(y)\leq \Phi_{x,p}(y)+\dfrac{H}{p!}\|y-x\|^{p+\nu},\,\,y\in\E.
\label{eq:2.6}
\end{equation}
This property motivates the use of the following class of models of $f$ around $x\in\E$:
\begin{equation}
\Omega_{x,p,H}^{(\alpha)}(y)=\Phi_{x,p}(y)+\dfrac{H}{p!}\|y-x\|^{p+\alpha},\,\,\alpha\in [0,1].
\label{eq:2.7}
\end{equation}
Note that, by (\ref{eq:2.6}), if $H\geq H_{f,p}(\nu)$ then $f(y)\leq \Omega_{x,p,H}^{(\nu)}(y)$ for all $y\in\E$.

\section{Tensor Schemes Without Acceleration}

Let us consider the following assumption:
\begin{itemize}
\item[\textbf{H1}] $H_{f,p}(\nu)<+\infty$ for some $\nu\in [0,1]$.
\end{itemize}

\noindent Regarding the smoothness parameter $\nu$, there are only two possible situations: either $\nu$ is known, or $\nu$ is unknown. In order to cover both cases in a single framework, as in \cite{GN3}, we shall consider the parameter
\begin{equation}
\alpha=\left\{\begin{array}{ll} \nu,&\text{if}\,\,\nu\,\,\text{is known},\\
                                1, &\text{if}\,\,\nu\,\,\text{is unknown}.
              \end{array}
       \right.
\label{eq:3.1}
\end{equation}
\begin{mdframed}
\noindent\textbf{Algorithm 1. Tensor Method (Algorithm 2 in \cite{GN3})}
\\[0.2cm]
\noindent\textbf{Step 0.} Choose $x_{0}\in\E$, $H_{0}>0$, $\theta\geq 0$ and $\epsilon\in (0,1)$.  Set $\alpha$ by (\ref{eq:3.1}) and $t:=0$.\\
\noindent\textbf{Step 1.} If $\|\nabla f(x_{t})\|_{*}\leq\epsilon$, STOP.\\
\noindent\textbf{Step 2.} Set $i:=0$.\\
\noindent\textbf{Step 2.1} Compute an approximate solution $x_{t,i}^{+}$ to 
\begin{equation}
\min_{y\in\E}\,\Omega_{x_{t},p,2^{i}H_{t}}^{(\alpha)}(y)
\label{eq:3.2}
\end{equation}
such that
\begin{equation}
\Omega_{x_{t},p,2^{i}H_{t}}^{(\alpha)}(x_{t,i}^{+})\leq f(x_{t})\quad\text{and}\quad \|\nabla\Omega_{x_{t},p,2^{i}H_{t}}^{(\alpha)}(x_{t,i}^{+})\|_{*}\leq\theta\|x_{t,i}^{+}-x_{t}\|^{p+\alpha-1}.
\label{eq:3.3}
\end{equation}
\noindent\textbf{Step 2.2.} If either $\|\nabla f(x_{t,i}^{+})\|_{*}\leq\epsilon$ or 
\begin{equation} 
f(x_{t})-f(x_{t,i}^{+})\geq\dfrac{1}{8(p+1)!(2^{i}H_{t})^{\frac{1}{p+\alpha-1}}}\|\nabla f(x_{t,i}^{+})\|_{*}^{\frac{p+\alpha}{p+\alpha-1}},
\label{eq:3.5}
\end{equation}
holds, set $i_{t}:=i$ and go to Step 3. Otherwise, set $i:=i+1$ and go to Step 2.1.\\
\noindent\textbf{Step 3.} Set $x_{t+1}=x_{t,i_{t}}^{+}$ and $H_{t+1}=2^{i_{t}-1}H_{t}$.\\
\noindent\textbf{Step 4.} Set $t:=t+1$ and go back to Step 1.
\end{mdframed}

\begin{remark}
If $\nu$ is unknown, by (\ref{eq:3.1}) we set $\alpha=1$ in Algorithm 1. The resulting algorithm is a universal scheme that can be viewed as a generalization of the universal second-order method (6.10) in \cite{GN}. Moreover, it is worth mentioning that for $p=3$ and $\alpha=\nu$, one case use Gradient Methods with Bregman distance \cite{GN4} to approximately solve (\ref{eq:3.2}) in the sense of (\ref{eq:3.3}).
\end{remark}

For both cases ($\nu$ known or unknown), Algorithm 1 is a particular instance of Algorithm 1 in \cite{GN3} in which $M_{t}=2^{i_{t}}H_{t}$ for all $t\geq 0$. Let us define the following function of $\epsilon$:
\begin{equation}
N_{\nu}(\epsilon)=\left\{\begin{array}{ll} \max\left\{\dfrac{3H_{f,p}(\nu)}{2},3\theta (p-1)!\right\},&\text{if}\,\,\nu\,\,\text{is known},\\
                                          \max\left\{\theta,\left(\dfrac{3H_{f,p}(\nu)}{2}\right)^{\frac{p}{p+\nu-1}}4^{\frac{1-\nu}{p+\nu-1}}\right\}\epsilon^{-\frac{1-\nu}{p+\nu-1}},&\text{if}\,\,\nu\,\,\text{is unknown}.
                         \end{array}
                  \right.
\label{eq:3.6}
\end{equation}
The next lemma provides upper bounds on $M_{t}$ and on the number of calls of the oracle in Algorithm 1.
\begin{lemma}
\label{lem:3.1}
Suppose that H1 holds. Given $\epsilon\in (0,1)$, assume that $\left\{x_{t}\right\}_{t=0}^{T}$ is a sequence generated by Algorithm 1 such that
\begin{equation}
\|\nabla f(x_{t})\|_{*}>\epsilon,\quad t=0,\ldots,T.
\label{eq:3.7}
\end{equation}
Then,
\begin{equation}
H_{t}\leq\max\left\{H_{0},N_{\nu}(\epsilon)\right\},\quad\text{for}\,\,t=0,\ldots,T,
\label{eq:3.8}
\end{equation}
and, consequently, 
\begin{equation}
M_{t}=2^{i_{t}}H_{t}\leq 2\max\left\{H_{0},N_{\nu}(\epsilon)\right\},\quad\text{for}\,\,t=0,\ldots,T-1,
\label{eq:3.9}
\end{equation}

Moreover, the number $O_{T}$ of calls of the oracle after $T$ iterations is bounded as follows:
\begin{equation}
O_{T}\leq 2T+\log_{2}\max\left\{H_{0},N_{\nu}(\epsilon)\right\}-\log_{2}H_{0}.
\label{eq:3.10}
\end{equation}
\end{lemma}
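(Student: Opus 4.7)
My plan is to derive all three bounds from a single critical-threshold statement for the inner loop, and then to exploit the simple recursion $H_{t+1} = 2^{i_t-1}H_t$ prescribed in Step 3.

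\textbf{Step (a): A critical value for $2^iH_t$.} The heart of the argument is to show that if $\|\nabla f(x_t)\|_* > \epsilon$ and $2^iH_t \geq N_\nu(\epsilon)$, then inner iteration $i$ is accepted, i.e., either $\|\nabla f(x_{t,i}^+)\|_*\leq \epsilon$ or condition (\ref{eq:3.5}) holds. I would start from the inexact optimality condition in (\ref{eq:3.3}) and use (\ref{eq:2.5}) to bound
\[
\|\nabla f(x_{t,i}^+)\|_{*}\leq \left(\tfrac{H_{f,p}(\nu)}{(p-1)!}\|x_{t,i}^+-x_t\|^{\nu-\alpha}+\theta+\tfrac{(p+\alpha)\,2^iH_t}{p!}\right)\|x_{t,i}^+-x_t\|^{p+\alpha-1},
\]
and combine this with the descent estimate $f(x_t)-f(x_{t,i}^+)\geq \frac{2^iH_t}{p!}\|x_{t,i}^+-x_t\|^{p+\alpha}$ that follows from $\Omega_{x_t,p,2^iH_t}^{(\alpha)}(x_{t,i}^+)\leq f(x_t)$ and (\ref{eq:2.6}) evaluated at $y=x_{t,i}^+$. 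When $\alpha=\nu$, the first bracketed term is a constant and the threshold in $N_\nu(\epsilon)$ comes from demanding that $2^iH_t$ dominates both $H_{f,p}(\nu)$ and $\theta(p-1)!$ up to a small absolute factor. When $\alpha=1\neq\nu$ (unknown $\nu$), the term $\|x_{t,i}^+-x_t\|^{\nu-1}$ must be absorbed; using the lower bound $\|x_{t,i}^+-x_t\|^{p+\nu-1}\geq \epsilon/\text{const}$, which is forced by $\|\nabla f(x_{t,i}^+)\|_*>\epsilon$, produces the factor $\epsilon^{-(1-\nu)/(p+\nu-1)}$ appearing in (\ref{eq:3.6}). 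This is the step I expect to be the main obstacle, and it reproduces the argument already given for Algorithm~1 in \cite{GN3}, since our scheme is just that algorithm with $M_t=2^{i_t}H_t$.

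\textbf{Step (b): Bound on $H_t$.} I would prove (\ref{eq:3.8}) by induction on $t$. The base case is trivial. For the inductive step, recall $H_{t+1}=2^{i_t-1}H_t$. If $i_t=0$, then $H_{t+1}=H_t/2\leq H_t$. If $i_t\geq 1$, then inner iteration $i_t-1$ was rejected, so by the contrapositive of (a) one has $2^{i_t-1}H_t<N_\nu(\epsilon)$, i.e., $H_{t+1}<N_\nu(\epsilon)$. In either case $H_{t+1}\leq\max\{H_t,N_\nu(\epsilon)\}\leq\max\{H_0,N_\nu(\epsilon)\}$ by the induction hypothesis, which closes the induction.

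\textbf{Step (c): Bound on $M_t$.} At acceptance, either $i_t=0$ so $M_t=H_t\leq\max\{H_0,N_\nu(\epsilon)\}$, or $i_t\geq1$ and $2^{i_t-1}H_t<N_\nu(\epsilon)$ by (a), hence $M_t=2^{i_t}H_t<2N_\nu(\epsilon)$. In either case $M_t\leq 2\max\{H_0,N_\nu(\epsilon)\}$, giving (\ref{eq:3.9}).

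\textbf{Step (d): Oracle count.} Each outer iteration $t$ invokes the oracle exactly $i_t+1$ times (for $i=0,1,\dots,i_t$), so $O_T=T+\sum_{t=0}^{T-1}i_t$. From $H_{t+1}=2^{i_t-1}H_t$ I get $i_t=1+\log_2 H_{t+1}-\log_2 H_t$, and telescoping yields $\sum_{t=0}^{T-1}i_t=T+\log_2 H_T-\log_2 H_0$. Combining with (\ref{eq:3.8}) gives (\ref{eq:3.10}).
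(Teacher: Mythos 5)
Your proof is correct and follows essentially the same route as the paper's: induction on $t$ using the line-search threshold to bound $H_{t+1}$, the identity $M_t=2^{i_t}H_t=2H_{t+1}$ for (\ref{eq:3.9}), and the telescoping of $i_t=1+\log_2H_{t+1}-\log_2H_t$ for (\ref{eq:3.10}). The only difference is that the paper imports your Step (a) wholesale from Lemmas A.2 and Corollary A.5 of \cite{GN3} rather than rederiving it; in your sketch of that step the descent estimate should read $f(x_t)-f(x_{t,i}^{+})\geq\frac{2^iH_t-H_{f,p}(\nu)}{p!}\|x_{t,i}^{+}-x_t\|^{p+\nu}$ (when $\alpha=\nu$), a constant-factor correction that is exactly what the margins built into $N_\nu(\epsilon)$ in (\ref{eq:3.6}) are designed to absorb.
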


\begin{proof}
Let us prove (\ref{eq:3.8}) by induction. Clearly it holds for $t=0$. Assume that (\ref{eq:3.8}) is true for some $t$, $0\leq t\leq T-1$. If $\nu$ is known, then by (\ref{eq:3.1}) we have $\alpha=\nu$. Thus, it follows from H1 and Lemma A.2 in \cite{GN3} that the final value of $2^{i_{t}}H_{t}$ cannot be bigger than 
$2\max\left\{(3/2)H_{f,p}(\nu),3\theta (p-1)!\right\}$, since otherwise we should stop the line search earlier. Therefore,
\begin{equation*}
H_{t+1}=\frac{1}{2}2^{i_{t}}H_{t}\leq\max\left\{\dfrac{3H_{f,p}(\nu)}{2},3\theta (p-1)!\right\}=N_{\nu}(\epsilon)\leq\max\left\{H_{0},N_{\nu}(\epsilon)\right\},
\end{equation*}
that is, (\ref{eq:3.8}) holds for $t=t+1$. On the other hand, if $\nu$ is unknown, we have $\alpha=1$. In view of  (\ref{eq:3.7}), Corollary A.5 \cite{GN3} and $\epsilon\in (0,1)$, we must have
\begin{equation*}
2^{i_{t}}H_{t}\leq 2\max\left\{\theta,\left(\dfrac{3H_{f,p}(\nu)}{2}\right)^{\frac{p}{p+\nu-1}}\left(\dfrac{4}{\epsilon}\right)^{\frac{1-\nu}{p+\nu-1}}\right\}\leq 2N_{\nu}(\epsilon).
\end{equation*}
Consequently, it follows that
\begin{equation*}
H_{t+1}=\frac{1}{2}2^{i_{t}}H_{t}\leq N_{\nu}(\epsilon)\leq\max\left\{H_{0},N_{\nu}(\epsilon)\right\},
\end{equation*}
that is, (\ref{eq:3.8}) holds for $t+1$. This completes the induction argument. Using (\ref{eq:3.8}), for $t=0,\ldots,T-1$ we get $M_{t}=2H_{t+1}\leq 2\max\left\{H_{0},N_{\nu}(\epsilon)\right\}$. Finally, note that at the $t$th iteration of Algorithm 1, the oracle is called $i_{t}+1$ times. Since $H_{t+1}=2^{i_{t}-1}H_{t}$, it follows that $i_{t}-1=\log_{2}H_{t+1}-\log_{2}H_{t}$. Thus, by (\ref{eq:3.8}) we get
\begin{eqnarray*}
O_{T}&=&\sum_{t=0}^{T-1}(i_{t}+1)=\sum_{t=0}^{T-1}\,2+\log_{2}H_{t+1}-\log_{2}H_{t}=2T+\log_{2}H_{T}-\log_{2}H_{0}\\
&\leq & 2T+\log_{2}\max\left\{H_{0},N_{\nu}(\epsilon)\right\}-\log_{2}H_{0},
\end{eqnarray*}
and the proof is complete.
\end{proof}

\noindent Let us consider the additional assumption:
\begin{itemize}
\item[\textbf{H2}] The level sets of $f$ are bounded, that is, $\max_{x\in \mathcal{L}(x_{0})}\|x-x^{*}\|\leq D_{0}\in (1,+\infty)$ for $\mathcal{L}(x_{0})\equiv\left\{x\in\E\,:\,f(x)\leq f(x_{0})\right\}$, with $x_{0}$ being the starting point.
\end{itemize}
The next theorem gives global convergence rates for Algorithm 1 in terms of the functional residual.
\begin{theorem}
\label{thm:june3.1}
Suppose that H1 and H2 are true and let $\left\{x_{t}\right\}_{t=0}^{T}$ be a sequence generated by Algorithm 1 such that, for $t=0,\ldots,T$, we have
\begin{equation*}
\|\nabla f(x_{t,i}^{+})\|_{*}>\epsilon,\quad i=0,\ldots,i_{t}.
\end{equation*}
Let $m$ be the first iteration number such that
\begin{equation*}
f(x_{m})-f(x^{*})\leq 4[8(p+1)!]^{p+\alpha-1}\max\left\{H_{0},N_{\nu}(\nu)\right\}D_{0}^{p+\alpha},
\end{equation*}
and assume that $m < T$. Then
\begin{equation}
m\leq\dfrac{1}{\ln\left(\frac{p+\alpha}{p+\alpha-1}\right)}\ln\max\left\{1,\log_{2}\dfrac{f(x_{0})-f(x^{*})}{2[8(p+1)!]^{p+\alpha-1}\max\left\{H_{0},N_{\nu}(\epsilon)\right\}D_{0}^{p+\alpha}}\right\}
\label{eq:june3.13}
\end{equation}
and, for all $k$, $m<k\leq T$, we have
\begin{equation}
f(x_{k})-f(x^{*})\leq\dfrac{[24p(p+1)!]^{p+\alpha-1}2\max\left\{H_{0},N_{\nu}(\epsilon)\right\}D_{0}^{p+\alpha}}{(k-m)^{p+\alpha-1}}.
\label{eq:june3.14}
\end{equation}
\end{theorem}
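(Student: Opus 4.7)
The plan is a two-phase analysis of the functional residual $\delta_t := f(x_t)-f(x^*)$. First, I would combine the acceptance condition (\ref{eq:3.5}) with the bound $M_t\leq 2\max\{H_0,N_\nu(\epsilon)\}$ from Lemma~\ref{lem:3.1} to obtain the uniform descent inequality
\[
\delta_t - \delta_{t+1}\;\geq\;\frac{\|\nabla f(x_{t+1})\|_{*}^{r}}{8(p+1)!\,(2\max\{H_0,N_\nu(\epsilon)\})^{1/(p+\alpha-1)}},\qquad r:=\frac{p+\alpha}{p+\alpha-1}.
\]
The hypothesis of the theorem forces (\ref{eq:3.5}) to trigger at every accepted step, so $f(x_{t+1})\leq f(x_t)\leq f(x_0)$ and each iterate stays in $\mathcal{L}(x_0)$; H2 then gives $\|x_{t+1}-x^*\|\leq D_0$, and convexity yields $\delta_{t+1}\leq D_0\|\nabla f(x_{t+1})\|_{*}$. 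Substituting produces the single-variable recursion $\delta_t\geq \delta_{t+1}+\delta_{t+1}^{r}/A$ with $A:=8(p+1)!\,(2\max\{H_0,N_\nu(\epsilon)\})^{1/(p+\alpha-1)}D_0^{r}$.

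For Phase~1, I would rescale by $u_t := \delta_t/\Delta$ with $\Delta:=A^{p+\alpha-1}$. Using $r(p+\alpha-1)=p+\alpha$, this collapses to $\Delta=2[8(p+1)!]^{p+\alpha-1}\max\{H_0,N_\nu(\epsilon)\}D_0^{p+\alpha}$---exactly the denominator inside the $\log_2$ in (\ref{eq:june3.13})---and the recursion reduces to $u_t \geq u_{t+1}(1+u_{t+1}^{r-1})$. Since the target residual in the theorem equals $2\Delta$, the condition $\delta_m\leq 2\Delta$ is equivalent to $u_m\leq 2$. While $u_{t+1}\geq 1$, I drop the `$1$' and get the super-geometric contraction $u_t\geq u_{t+1}^{r}$; chaining gives $\ln u_0\geq r^{m}\ln u_m$, and the first $m$ with $u_m\leq 2$ satisfies $r^m\geq\log_2 u_0$, which rearranges into the bound (\ref{eq:june3.13}) (the $\max\{1,\cdot\}$ absorbs the trivial case $u_0\leq 2$).

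For Phase~2 ($k>m$), I would first note that $u_{t+1}\leq 2$ combined with $r-1=1/(p+\alpha-1)\leq 1$ forces $\delta_t\leq (1+2^{r-1})\delta_{t+1}\leq 3\delta_{t+1}$, so consecutive residuals remain comparable. Then applying the classical potential-function trick to $g_t:=1/\delta_t^{r-1}$, the mean value theorem gives $g_{t+1}-g_t\geq (r-1)/(3A)$ (using $\xi\leq\delta_t\leq 3\delta_{t+1}$ in the integral representation $g_{t+1}-g_t = (r-1)\int_{\delta_{t+1}}^{\delta_t}\xi^{-r}d\xi$). Telescoping from $m$ to $k$, discarding $g_m\geq 0$, inverting, and raising to the power $1/(r-1)=p+\alpha-1$ produces $\delta_k\leq[3A/(r-1)]^{p+\alpha-1}/(k-m)^{p+\alpha-1}$; expanding via $A^{p+\alpha-1}=\Delta$ and $p+\alpha-1\leq p$ recovers the announced constant $[24p(p+1)!]^{p+\alpha-1}$ of (\ref{eq:june3.14}). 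The main obstacle is precisely this constant bookkeeping---aligning the exponents $r$, $r-1$, and $p+\alpha-1$ through the rescaling $\Delta=A^{p+\alpha-1}$ and the final root extraction so that the powers of $D_0$, $(p+1)!$, and $\max\{H_0,N_\nu(\epsilon)\}$ land in exactly the form stated.
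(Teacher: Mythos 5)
Your overall architecture is sound and is, in substance, a self-contained reconstruction of the result the paper merely cites: the paper's own proof is one line, bounding $M_t\le 2\max\{H_0,N_\nu(\epsilon)\}$ via Lemma \ref{lem:3.1} and then invoking Theorem 3.1 of \cite{GN3}. Your derivation of the recursion $\delta_t\ge\delta_{t+1}+\delta_{t+1}^{r}/A$ from (\ref{eq:3.5}), H2 and convexity is correct (the hypothesis does force (\ref{eq:3.5}) to fire at every accepted step, so the iterates stay in $\mathcal{L}(x_0)$), the rescaling $\Delta=A^{p+\alpha-1}$ does reduce it to $u_t\ge u_{t+1}(1+u_{t+1}^{r-1})$, and Phase 1 is essentially right; the only blemish there is an off-by-one, since your chain controls $\ln u_{m-1}>\ln 2$ rather than $\ln u_m$, so what you actually get is $m\le 1+\log_r\log_2 u_0$ rather than (\ref{eq:june3.13}) verbatim.

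The genuine gap is the opening claim of Phase 2, that $\delta_t\le(1+2^{r-1})\delta_{t+1}\le 3\delta_{t+1}$. This does not follow from anything you have: the recursion is only a \emph{lower} bound on the per-step decrease, and nothing in Algorithm 1 prevents a single step from overshooting so that $\delta_{t+1}\ll\delta_t$. Without that comparability your mean-value estimate $g_{t+1}-g_t\ge(r-1)/(3A)$ is unjustified, because the integrand $\xi^{-r}$ must be evaluated near the upper limit $\delta_t$. The standard repair is a case split. If $\delta_{t+1}\ge\delta_t/2$, then $g_{t+1}-g_t=(r-1)\int_{\delta_{t+1}}^{\delta_t}\xi^{-r}d\xi\ge(r-1)(\delta_t-\delta_{t+1})\delta_t^{-r}\ge(r-1)2^{-r}/A$. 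If instead $\delta_{t+1}<\delta_t/2$, then $g_{t+1}\ge 2^{r-1}g_t$, and since in Phase 2 one already knows $g_t\ge(2\Delta)^{-(r-1)}=2^{-(r-1)}/A$, the increment is at least $\bigl(1-2^{-(r-1)}\bigr)/A\ge(r-1)/(3A)$ using $1-2^{-q}\ge q(\ln 2)2^{-q}$ for $q\in(0,1]$. Either way the potential grows by at least $(r-1)/(4A)$ per step, and the telescoping then yields (\ref{eq:june3.14}) with your bookkeeping, up to the value of the absolute constant (a naive accounting gives $32p$ in place of $24p$; recovering the paper's exact constant requires slightly sharper estimates in the first case). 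With that substitution your proof is complete.
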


\begin{proof}
By Lemma \ref{lem:3.1}, this result follows from Theorem 3.1 in \cite{GN3} with $M_{\nu}=2\max\left\{H_{0},N_{\nu}(\epsilon)\right\}$.
\end{proof}

Now, we can derive global convergence rates for Algorithm 1 in terms of the norm of the gradient.
\begin{theorem}
\label{thm:main3.1}
Under the same assumptions of Theorem \ref{thm:june3.1}, if $T=m+3s$ for some $s\geq 1$, then
\begin{equation}
g_{T}^{*}\equiv\min_{0\leq t\leq T}\|\nabla f(x_{t})\|_{*}\leq 2\left[\dfrac{288p(p+1)!D_{0}}{T-m}\right]^{p+\alpha-1}\max\left\{H_{0},N_{\nu}(\epsilon)\right\}.
\label{eq:3.12}
\end{equation}
Consequently,
\begin{equation}
T< m+\kappa_{1}^{(\nu)}[288p(p+1)!D_{0}]\epsilon^{-\frac{1}{p+\nu-1}},
\label{eq:3.13}
\end{equation}  
with
\begin{equation*}
\kappa_{1}^{(\nu)}=\left\{\begin{array}{ll} \left(2\max\left\{H_{0},\dfrac{3H_{f,p}(\nu)}{2},3\theta (p-1)!\right\}\right)^{\frac{1}{p+\nu-1}},&\text{if}\,\,\nu\,\,\text{is known},\\
                                          \left(2\max\left\{H_{0},\theta,\left(\dfrac{3H_{f,p}(\nu)}{2}\right)^{\frac{p}{p+\nu-1}}4^{\frac{1-\nu}{p+\nu-1}}\right\}\right)^{\frac{1}{p}},&\text{if}\,\,\nu\,\,\text{is unknown}.
                         \end{array}
                  \right.
\end{equation*}
\end{theorem}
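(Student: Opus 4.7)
The plan is to convert the functional residual bound of Theorem \ref{thm:june3.1} into a gradient-norm bound by a midpoint telescoping argument. Concretely, I would split $[m,T]$ at $k_{0}:=m+s$ (so that $T-k_{0}=2s$), apply Theorem \ref{thm:june3.1} at $k=k_{0}$ to upper-bound $f(x_{k_{0}})-f(x^{*})$ by a multiple of $s^{-(p+\alpha-1)}$, and then telescope the per-iteration descent (\ref{eq:3.5}) over $t=k_{0},\ldots,T-1$ to lower-bound the same residual in terms of $g_{T}^{*}$.

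Under the hypothesis $\|\nabla f(x_{t,i}^{+})\|_{*}>\epsilon$, the first disjunct of Step 2.2 fails at every accepted iteration, so (\ref{eq:3.5}) must hold with $i=i_{t}$ for all $t\leq T$. Combining this with Lemma \ref{lem:3.1} (which gives $M_{t}=2^{i_{t}}H_{t}\leq 2\max\{H_{0},N_{\nu}(\epsilon)\}$), the trivial bound $\|\nabla f(x_{t+1})\|_{*}\geq g_{T}^{*}$, and $f(x_{T})\geq f(x^{*})$, the telescoping sum $\sum_{t=k_{0}}^{T-1}[f(x_{t})-f(x_{t+1})]$ yields
\[
f(x_{k_{0}})-f(x^{*})\;\geq\;\frac{2s\,(g_{T}^{*})^{(p+\alpha)/(p+\alpha-1)}}{8(p+1)!\,[2\max\{H_{0},N_{\nu}(\epsilon)\}]^{1/(p+\alpha-1)}}.
\]
Pairing this with the upper bound from Theorem \ref{thm:june3.1}, substituting $s=(T-m)/3$, and raising the resulting inequality for $(g_{T}^{*})^{(p+\alpha)/(p+\alpha-1)}$ to the power $(p+\alpha-1)/(p+\alpha)$ then delivers (\ref{eq:3.12}); the consolidation of constants into $2[288p(p+1)!]^{p+\alpha-1}$ uses the factorization $288=3\cdot 24\cdot 4$ and the fact that raising a quantity $\geq 1$ to a sub-unit exponent decreases it.

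For the consequent bound (\ref{eq:3.13}), I would invert (\ref{eq:3.12}): forcing its right-hand side to be at most $\epsilon$ gives $T-m\leq 288p(p+1)!\,D_{0}\,[2\max\{H_{0},N_{\nu}(\epsilon)\}]^{1/(p+\alpha-1)}\epsilon^{-1/(p+\alpha-1)}$. In the known-$\nu$ case ($\alpha=\nu$, with $N_{\nu}(\epsilon)$ independent of $\epsilon$) this is already the advertised form. In the unknown-$\nu$ case ($\alpha=1$), the factor $\epsilon^{-(1-\nu)/(p+\nu-1)}$ hidden inside $N_{\nu}(\epsilon)$ is pulled out using $\epsilon^{-(1-\nu)/(p+\nu-1)}\geq 1$, after which the identity $\frac{1-\nu}{p(p+\nu-1)}+\frac{1}{p}=\frac{1}{p+\nu-1}$ collapses the two fractional exponents into the stated uniform rate $\epsilon^{-1/(p+\nu-1)}$ together with the second branch of $\kappa_{1}^{(\nu)}$.

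The main obstacle is the bookkeeping through the fractional exponentiation, in particular verifying that the aggregated constant coming out of the two combined displays is always dominated by $2[288p(p+1)!]^{p+\alpha-1}$ uniformly in $p\geq 2$ and $\alpha\in[0,1]$, and handling the $\epsilon$-dependent factor of $N_{\nu}(\epsilon)$ in the unknown-$\nu$ branch so that a single expression for $\kappa_{1}^{(\nu)}$ covers both cases. Once these points are checked, the rest is a mechanical calculation.
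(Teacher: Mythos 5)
Your proposal is correct and follows essentially the same route as the paper: combine the functional-residual decay from Theorem \ref{thm:june3.1} with the telescoped per-iteration decrease (\ref{eq:3.5}) (valid at accepted iterations because the gradient-norm stopping test fails), then invert and split the known/unknown-$\nu$ cases using (\ref{eq:3.6}) and the exponent identity $\frac{1-\nu}{p(p+\nu-1)}+\frac{1}{p}=\frac{1}{p+\nu-1}$. The only deviation is the split point --- you evaluate the residual bound at $m+s$ and telescope over $2s$ iterations, while the paper uses $m+2s$ and telescopes over $s$ iterations --- and a short check (using $p+\alpha\geq 2$) confirms your variant still fits under the constant $2[288p(p+1)!D_{0}]^{p+\alpha-1}\max\{H_{0},N_{\nu}(\epsilon)\}$.
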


\begin{proof}
By Theorem \ref{thm:june3.1}, we have
\begin{equation}
f(x_{k})-f(x^{*})\leq \dfrac{[24p(p+1)!]^{p+\alpha-1}2\max\left\{H_{0},N_{\nu}(\epsilon)\right\}D_{0}^{p+\alpha}}{(k-m)^{p+\alpha-1}},
\label{eq:3.14}
\end{equation}
for all $k$, $m<k\leq T$. In particular, it follows from (\ref{eq:3.5}) and (\ref{eq:3.14}) that
\begin{eqnarray*}
\dfrac{[24p(p+1)!]^{p+\alpha-1}2\max\left\{H_{0},N_{\nu}(\epsilon)\right\}D_{0}^{p+\alpha}}{(2s)^{p+\alpha-1}}&\geq & f(x_{m+2s})-f(x^{*})\\
           & = & f(x_{T})-f(x^{*})+\sum_{k=m+2s}^{T-1}(f(x_{k})-f(x_{k+1}))\\
           &\geq & \dfrac{s}{8(p+1)!}\left[\dfrac{1}{2\max\left\{H_{0},N_{\nu}(\epsilon)\right\}}\right]^{\frac{1}{p+\alpha-1}}(g_{T}^{*})^{\frac{p+\alpha}{p+\alpha-1}}.
\end{eqnarray*}
Therefore,
\begin{eqnarray*}
(g_{T}^{*})^{\frac{p+\alpha}{p+\alpha-1}}&\leq & \dfrac{8(p+1)![24p(p+1)!]^{p+\alpha-1}\left(2\max\left\{H_{0},N_{\nu}(\epsilon)\right\}\right)^{\frac{p+\alpha}{p+\alpha-1}}D_{0}^{p+\alpha}}{2^{p+\alpha-1}s^{p+\alpha}}\\
& \leq & \dfrac{(96p)^{p+\alpha-1}3^{p+\alpha}[(p+1)!]^{p+\alpha}\left(2\max\left\{H_{0},N_{\nu}(\epsilon)\right\}\right)^{\frac{p+\alpha}{p+\alpha-1}}D_{0}^{p+\alpha}}{(T-m)^{p+\alpha}},
\end{eqnarray*}
and so (\ref{eq:3.12}) holds. By assumption, we have $g_{T}^{*}>\epsilon$. Thus, by (\ref{eq:3.12}) we get
\begin{equation*}
\epsilon<2\left(\dfrac{288p(p+1)!D_{0}}{T-m}\right)^{p+\alpha-1}\max\left\{H_{0},N_{\nu}(\epsilon)\right\}
\end{equation*}
\begin{equation}
\Longrightarrow T<m+[288p(p+1)!D_{0}]\left(2\max\left\{H_{0},N_{\nu}(\epsilon)\right\}\right)^{\frac{1}{p+\alpha-1}}\epsilon^{-\frac{1}{p+\alpha-1}}.
\label{eq:3.15}
\end{equation}
Finally, by analyzing separately the cases in which $\nu$ is known and unknown, it follows from (\ref{eq:3.15}) and (\ref{eq:3.6}) that (\ref{eq:3.13}) is true.
\end{proof}

\begin{remark}
\label{rem:ale1}
Suppose that the objective $f$ in (\ref{eq:2.1}) is nonconvex and bounded from below by $f^{*}$. Then, it follows from (\ref{eq:3.5}) and (\ref{eq:3.9}) that
\begin{equation*}
f(x_{t})-f(x_{t+1})\geq\dfrac{1}{8(p+1)!}\left[\dfrac{1}{2\max\left\{H_{0},N_{\nu}(\epsilon)\right\}}\right]^{\frac{1}{p+\alpha-1}}\epsilon^{\frac{p+\alpha}{p+\alpha-1}},\,\,t=0,\ldots,T-1.
\end{equation*}
Summing up these inequalities, we get
\begin{equation*}
f(x_{0})-f^{*}\geq f(x_{0})-f(x_{T})\geq\dfrac{T}{8(p+1)!}\left[\dfrac{1}{2\max\left\{H_{0},N_{\nu}(\epsilon)\right\}}\right]^{\frac{1}{p+\alpha-1}}\epsilon^{\frac{p+\alpha}{p+\alpha-1}}
\end{equation*}
and so, by (\ref{eq:3.6}), we obtain $T\leq\mathcal{O}\left(\epsilon^{-\frac{p+\nu}{p+\nu-1}}\right)$.
This bound generalizes the bound of $\mathcal{O}\left(\epsilon^{-\frac{2+\nu}{1+\nu}}\right)$ proved in \cite{GN} for $p=2$. It agrees in order with the complexity bounds proved in \cite{Mart} and \cite{CGT2} for different universal tensor methods.
\end{remark}

\section{Accelerated Tensor Schemes}

The schemes presented here generalize the procedures described in \cite{NES8} for $p=1$ and $p=2$. Specifically, our general scheme is obtained by adding Step 2 of Algorithm 1 at the end of Algorithm 4 in \cite{GN3}, in order to relate the functional decrease with the norm of the gradient of $f$ in suitable points:
\begin{mdframed}
\noindent\textbf{Algorithm 2. Adaptive Accelerated Tensor Method}
\\[0.2cm]
\noindent\textbf{Step 0.} Choose $x_{0}\in\E$, $\tilde{H}_{0},H_{0}>0$, $\theta\geq 0$ and $\epsilon\in (0,1)$. Set $\alpha$ by (\ref{eq:3.1}) and define function $\psi_{0}(x)=\frac{1}{p+\alpha}\|x-x_{0}\|^{p+\alpha}$. Set $v_{0}=z_{0}=x_{0}$, $A_{0}=0$ and $t:=0$.\\
\noindent\textbf{Step 1.} If $\min\left\{\|\nabla f(x_{t})\|_{*},\|\nabla f(z_{t})\|_{*}\right\}\leq\epsilon$, STOP.\\
\noindent\textbf{Step 2.} Set $i:=0$.\\
\noindent\textbf{Step 2.1.} Compute the coefficient $a_{t,i}>0$ by solving equation 
\begin{equation*}
a_{t,i}^{p+\alpha}=\dfrac{1}{2^{(3p-1)}}\left[\dfrac{(p-1)!}{2^i\tilde{H}_{t}}\right](A_{t}+a_{t,i})^{p+\alpha-1}.
\end{equation*}
\noindent\textbf{Step 2.2.} Set $\gamma_{t,i}=\dfrac{a_{t,i}}{A_{t}+a_{t,i}}$ and compute $y_{t,i}=(1-\gamma_{t,i})x_{t}+\gamma_{t,i}v_{t}$.\\
\noindent\textbf{Step 2.3} Compute an approximate solution $x_{t,i}^{+}$ to $\min_{x\in\E}\,\Omega_{y_{t,i},p,2^{i}\tilde{H}_{t}}^{(\alpha)}(x)$, such that
\begin{equation*}
\Omega_{y_{t,i},p,2^{i}\tilde{H}_{t}}^{(\alpha)}(x_{t,i}^{+})\leq f(y_{t,i})
\quad\text{and}\quad\|\nabla\Omega_{y_{i,t},p,2^{i}\tilde{H}_{t}}^{(\alpha)}(x_{t,i}^{+})\|_{*}\leq\theta\|x_{t,i}^{+}-y_{t,i}\|^{p+\alpha-1}.
\end{equation*}
\noindent\textbf{Step 2.4.} If either condition $\|\nabla f(x_{t,i}^{+})\|_{*}\leq\epsilon$ or 
\begin{equation*}
\la \nabla f(x_{t,i}^{+}),y_{t,i}-x_{t,i}^{+}\ra\geq\dfrac{1}{4}\left[\dfrac{(p-1)!}{2^{i}\tilde{H}_{t}}\right]^{\frac{1}{p+\alpha-1}}\|\nabla f(x_{t,i}^{+})\|_{*}^{\frac{p+\alpha}{p+\alpha-1}}
\end{equation*}
holds, set $i_{t}:=i$ and go to Step 3. Otherwise, set $i:=i+1$ and go back to Step 2.1.\\
\noindent\textbf{Step 3.} Set $x_{t+1}=x_{t,i_{t}}^{+}$ and $\tilde{H}_{t+1}=2^{i_{t}-1}\tilde{H}_{t}$.\\
\noindent\textbf{Step 4.} Define $\psi_{t+1}(x)=\psi_{t}(x)+a_{t}\left[f(x_{t+1})+\langle\nabla f(x_{t+1}),x-x_{t+1}\rangle\right]$ and compute $v_{t+1}=\arg\min_{x\in\E}\,\psi_{t+1}(x)$.\\
\noindent\textbf{Step 5.} Set $\bar{z}_{t}=\arg\min\left\{f(y)\,:\,y\in\left\{z_{t},x_{t+1}\right\}\right\}$ and $j:=0$.\\
\noindent\textbf{Step 6.} Set $j:=0$.\\
\noindent\textbf{Step 6.1} Compute an approximate solution $z_{t,j}^{+}$ to $\min_{y\in\E}\,\Omega_{\bar{z}_{t},p,2^{j}H_{t}}^{(\alpha)}(y)$ such that
\begin{equation*}
\Omega_{\bar{z}_{t},p,2^{j}H_{t}}^{(\alpha)}(z_{t,j}^{+})\leq f(\bar{z}_{t})
\quad\text{and}\quad\|\nabla\Omega_{\bar{z}_{t},p,2^{j}H_{t}}^{(\alpha)}(z_{t,j}^{+})\|_{*}\leq\theta\|z_{t,j}^{+}-\bar{z}_{t}\|^{p+\alpha-1}.
\end{equation*}
\noindent\textbf{Step 6.2} If either $\|\nabla f(z_{t,j}^{+})\|_{*}\leq\epsilon$ or 
\begin{equation}
f(\bar{z}_{t})-f(z_{t,j}^{+})\geq\dfrac{1}{8(p+1)!(2^{j}H_{t})^{\frac{1}{p+\alpha-1}}}\|\nabla f(z_{t,j}^{+})\|_{*}^{\frac{p+\alpha}{p+\alpha-1}}
\label{eq:4.1}
\end{equation} 
holds, set $j_{t}:=j$ and go to Step 7. Otherwise, set $j:=j+1$ and go to Step 6.1.\\
\noindent\textbf{Step 7.} Set $z_{t+1}=z_{t,j_{t}}^{+}$, $H_{t+1}=2^{j_{t}-1}H_{t}$, $t:=t+1$ and go to Step 1.
\end{mdframed}
Let us define the following function of $\epsilon$: 
\begin{equation}
\tilde{N}_{\nu}(\epsilon)=\left\{\begin{array}{ll} (p+\nu-1)(H_{f,p}(\nu)+\theta(p-1)!),&\text{if}\,\,\nu\,\,\text{is known},\\
\max\left\{4\theta(p-1)!,(4H_{f,p}(\nu))^{\frac{p}{p+\nu-1}}\left(\dfrac{4}{\epsilon}\right)^{\frac{1-\nu}{p+\nu-1}}\right\},&\text{if}\,\,\nu\,\,\text{is unknown}.
                                    \end{array}
                             \right.
\label{eq:4.21}
\end{equation}

In Algorithm 2, note that $\left\{x_{t}\right\}$ is independent of $\left\{z_{t}\right\}$. The next theorem esta\-blishes global convergence rates for the functional residual with respect to $\left\{x_{t}\right\}$.
\begin{theorem}
\label{thm:B4.2} Assume that H1 holds and let the sequence
$\left\{x_{t}\right\}_{t=0}^{T}$ be generated by
Algorithm 2 such that, for $t=0,\ldots,T$ we have
\begin{equation}
\|\nabla f(x_{t,i}^{+})\|_{*}>\epsilon,\quad i=0,\ldots,i_{t}.
\label{eq:june4.12}
\end{equation}
Then, 
\begin{equation}
f(x_{t})-f(x^{*})\leq\dfrac{2^{3p}\max\left\{\tilde{H}_{0},\tilde{N}_{\nu}(\epsilon)\right\}(p+\alpha)^{p+\alpha-1}\|x_{0}-x^{*}\|^{p+\alpha}}{(p-1)!(t-1)^{p+\alpha}},
\label{eq:june4.13}
\end{equation}
for $t=2,\ldots,T$.
\end{theorem}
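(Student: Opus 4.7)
The plan is to exploit the key observation, stated just before the theorem, that the subsequence $\{x_t\}$ is produced independently of $\{z_t\}$: the variables $x_t$, $v_t$, $y_{t,i}$, $a_{t,i}$ and $\tilde H_t$ are updated by Steps~0--4 alone, which are exactly the steps of Algorithm~4 in \cite{GN3} augmented by the stopping test $\|\nabla f(x_{t,i}^+)\|_* \leq \epsilon$. Consequently, (\ref{eq:june4.13}) is nothing but the accelerated rate established for Algorithm~4 in \cite{GN3}, once the adaptive estimate $\tilde H_t$ has been uniformly bounded.

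The first step is therefore to mimic Lemma~\ref{lem:3.1} in the accelerated setting and show, by induction on $t$,
\begin{equation*}
\tilde H_t \leq \max\{\tilde H_0,\, \tilde N_\nu(\epsilon)\}, \qquad t = 0,\ldots,T.
\end{equation*}
If $\nu$ is known ($\alpha=\nu$), assumption H1 together with Lemma~A.2 of \cite{GN3} shows that once $2^i\tilde H_t$ exceeds $(p+\nu-1)\bigl(H_{f,p}(\nu)+\theta(p-1)!\bigr)$, the gradient-correlation acceptance test in Step~2.4 is automatically satisfied, so the inner loop terminates; halving as in Step~3 yields the bound. If $\nu$ is unknown ($\alpha=1$), the exponent mismatch is absorbed by Corollary~A.5 of \cite{GN3} combined with the hypothesis (\ref{eq:june4.12}), which turns the discrepancy $1-\nu$ into an $\epsilon^{-(1-\nu)/(p+\nu-1)}$ factor; this matches exactly the second branch of $\tilde N_\nu(\epsilon)$ in (\ref{eq:4.21}). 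From the bound on $\tilde H_t$ we immediately get $M_t := 2^{i_t}\tilde H_t \leq 2\max\{\tilde H_0,\tilde N_\nu(\epsilon)\}$ for all $t=0,\ldots,T-1$.

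The second step is to substitute this uniform bound into the convergence guarantee already proved in \cite{GN3} for Algorithm~4 with adaptive $M_t$. That guarantee delivers an estimate of the form
\begin{equation*}
f(x_t)-f(x^*) \;\leq\; \frac{2^{3p-1}(p+\alpha)^{p+\alpha-1}\,\overline M\,\|x_0-x^*\|^{p+\alpha}}{(p-1)!\,(t-1)^{p+\alpha}},\qquad t\geq 2,
\end{equation*}
with $\overline M = \max_{0\leq t\leq T-1}M_t$. Plugging $\overline M \leq 2\max\{\tilde H_0,\tilde N_\nu(\epsilon)\}$ produces the announced constant $2^{3p}\max\{\tilde H_0,\tilde N_\nu(\epsilon)\}$ in (\ref{eq:june4.13}).

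The main obstacle is the first step: verifying that the gradient-correlation test of Step~2.4 (rather than a functional-decrease test as in Algorithm~1) really terminates the line search at the correct threshold. One cannot simply copy the proof of Lemma~\ref{lem:3.1}; instead one has to invoke the auxiliary Lemmas~A.2 and A.5 of \cite{GN3}, whose role is precisely to convert the Hölder bound (\ref{eq:2.5}) into the lower estimate of $\la\nabla f(x_{t,i}^+),y_{t,i}-x_{t,i}^+\ra$ scaled by $\|\nabla f(x_{t,i}^+)\|_*^{(p+\alpha)/(p+\alpha-1)}$ that appears in Step~2.4. Once this is in place, the remainder of the argument is a direct reuse of the existing accelerated analysis in \cite{GN3}.
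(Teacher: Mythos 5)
Your proposal is correct and follows essentially the same route as the paper: bound $\tilde H_t$ by $\max\{\tilde H_0,\tilde N_\nu(\epsilon)\}$ via induction as in Lemma \ref{lem:3.1}, deduce $\tilde M_t=2^{i_t}\tilde H_t\leq 2\max\{\tilde H_0,\tilde N_\nu(\epsilon)\}$, and invoke Theorem 4.2 of \cite{GN3} with $M_\nu=2\max\{\tilde H_0,\tilde N_\nu(\epsilon)\}$ to absorb the extra factor of $2$ into $2^{3p-1}\cdot 2=2^{3p}$. The only minor discrepancy is that the paper handles the gradient-correlation test of Step 2.4 via Lemmas A.6 and A.7 of \cite{GN3} rather than Lemma A.2 and Corollary A.5 (which serve the functional-decrease test of Algorithm 1), but you correctly identify the role these auxiliary results must play, so the substance is the same.
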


\begin{proof}
As in the proof of Lemma \ref{lem:3.1}, it follows from (\ref{eq:june4.12}), (\ref{eq:4.21}) and Lemmas A.6 and A.7 in \cite{GN3} that
\begin{equation*}
\tilde{H}_{t}\leq\max\left\{\tilde{H}_{0},\tilde{N}_{\nu}(\epsilon)\right\},\,\,t=0,\ldots, T,
\end{equation*} 
which gives
\begin{equation*}
\tilde{M}_{t}=2^{i_{t}}\tilde{H}_{t}=2\left(2^{i_{t}}\tilde{H}_{t}\right)=2\tilde{H}_{t+1}\leq 2\max\left\{\tilde{H}_{0},\tilde{N}_{\nu}(\epsilon)\right\},\,\,t=0,\ldots,T-1.
\end{equation*}
Then, (\ref{eq:june4.13}) follows directly from Theorem 4.2 in \cite{GN3} with $M_{\nu}=2\max\left\{\tilde{H}_{0},\tilde{N}_{\nu}(\epsilon)\right\}$.
\end{proof}

Now we can obtain global convergence rates for Algorithm 2 in terms of the norm of the gradient.
\begin{theorem}
\label{thm:5.1}
Suppose that H1 holds and let sequences $\left\{x_{t}\right\}_{t=0}^{T}$ and $\left\{z_{t}\right\}_{t=0}^{T}$ be generated by Algorithm 2. Assume that, for $t=0,\ldots,T$, we have
\begin{equation}
\min\left\{\|\nabla f(x_{t,i}^{+})\|_{*},\|\nabla f(z_{t,j}^{+})\|_{*}\right\}>\epsilon,\quad i=0,\ldots,i_{t},\,\,j=0,\ldots,j_{t}.
\label{eq:5.6}
\end{equation}
If $T=2s$ for some $s>1$, then
\begin{equation}
g_{T}^{*}\equiv\min_{0\leq k\leq T}\|\nabla f(z_{t})\|_{*}\leq C_{\nu}(\epsilon)\|x_{0}-x^{*}\|^{p+\alpha-1}\left[\dfrac{p+1}{T-2}\right]^{\frac{(p+\alpha-1)(p+\alpha+1)}{p+\alpha}}
\label{eq:5.8}
\end{equation}
where
\begin{equation*}
C_{\nu}(\epsilon)=\left[2^{(4p+6)}\max\left\{\tilde{H}_{0},\tilde{N}_{\nu}(\epsilon)\right\}\max\left\{H_{0},N_{\nu}(\epsilon)\right\}^{\frac{1}{p+\alpha-1}}\right]^{\frac{p+\alpha-1}{p+\alpha}},
\end{equation*}
with $N_{\nu}(\epsilon)$ and $\tilde{N}_{\nu}(\epsilon)$ defined in (\ref{eq:3.6}) and (\ref{eq:4.21}), respectively. Consequently,
\begin{equation}
T\leq 2+2^{(4p+6)}(p+1)\max\left\{1,\tilde{H}_{0},H_{0},3p\left(H_{f,p}(\nu)+\theta (p-1)!\right)\right\}\|x_{0}-x^{*}\|^{\frac{p+\nu}{p+\nu+1}}\epsilon^{-\frac{p+\nu}{(p+\nu-1)(p+\nu+1)}},
\label{eq:5.9}
\end{equation}
if $\nu$ is known (i.e., $\alpha=\nu$), and 
\begin{equation}
T< 2+ 2^{(2p+3)}(p+1)\max\left\{1,\tilde{H}_{0},H_{0},4\left[(4H_{f,p}(\nu))^{\frac{p}{p+\nu-1}}+\theta (p-1)!\right]\right\}^{\frac{1}{2}}\|x_{0}-x^{*}\|^{\frac{p+1}{p+2}}\epsilon^{-\frac{p+1}{(p+\nu-1)(p+2)}},
\label{eq:june5.10}
\end{equation}
if $\nu$ is unknown (i.e., $\alpha=1$).
\end{theorem}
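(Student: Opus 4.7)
The plan is to couple the accelerated functional-residual bound from Theorem \ref{thm:B4.2}, which controls the $\{x_t\}$ sequence, with the sufficient-decrease inequality (\ref{eq:4.1}) applied to the auxiliary sequence $\{z_t\}$, in the same spirit as in Theorem \ref{thm:main3.1}. The bridge between the two sequences is Step 5: since $\bar z_t = \arg\min\{f(z_t),f(x_{t+1})\}$ and Step 6.1 guarantees $\Omega^{(\alpha)}_{\bar z_t,p,2^{j_t}H_t}(z_{t+1})\le f(\bar z_t)$, we get the monotonicity chain
\[ f(z_{t+1}) \;\le\; f(\bar z_t) \;\le\; \min\{f(z_t),\, f(x_{t+1})\}, \]
so in particular $f(z_s) \le f(x_s)$ for every $s\ge 1$, and the whole $\{f(z_t)\}$ is nonincreasing.

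First I would control the regularization parameter of the $z$-loop. Because Step 6 replicates Step 2 of Algorithm 1 applied at $\bar z_t$, the induction in Lemma \ref{lem:3.1} carries over verbatim under (\ref{eq:5.6}) and yields
\[ H_{t+1}\le \max\{H_0,\,N_\nu(\epsilon)\}, \qquad 2^{j_t}H_t\le 2\max\{H_0,\,N_\nu(\epsilon)\}, \]
for $t=0,\ldots,T-1$. Simultaneously, Theorem \ref{thm:B4.2} applied at index $s$ gives
\[ f(x_s) - f(x^{*}) \;\le\; \frac{2^{3p}\max\{\tilde H_0,\,\tilde N_\nu(\epsilon)\}(p+\alpha)^{p+\alpha-1}\|x_0-x^{*}\|^{p+\alpha}}{(p-1)!\,(s-1)^{p+\alpha}}. \]

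Next, set $T=2s$ and split at $s$. Telescoping (\ref{eq:4.1}) over $t=s,\ldots,T-1$, using $f(\bar z_t)\le f(z_t)$ to compress the sum, and bounding every gradient norm from below by $g_T^{*}$ gives
\[ f(z_s)-f(z_T) \;\ge\; \sum_{t=s}^{T-1}\bigl(f(\bar z_t)-f(z_{t+1})\bigr) \;\ge\; \frac{(T-s)}{8(p+1)!\,[2\max\{H_0,N_\nu(\epsilon)\}]^{1/(p+\alpha-1)}}\,(g_T^{*})^{\frac{p+\alpha}{p+\alpha-1}}. \]
Combining this with $f(z_s)-f(z_T)\le f(z_s)-f(x^{*})\le f(x_s)-f(x^{*})$ (monotonicity plus $f(z_s)\le f(x_s)$) and the accelerated estimate yields an inequality of the shape
\[ (g_T^{*})^{\frac{p+\alpha}{p+\alpha-1}} \;\le\; \frac{C}{(T-s)(s-1)^{p+\alpha}}\,\|x_0-x^{*}\|^{p+\alpha}, \]
with $C$ an explicit product of $\max\{H_0,N_\nu(\epsilon)\}^{1/(p+\alpha-1)}$, $\max\{\tilde H_0,\tilde N_\nu(\epsilon)\}$, and polynomial factors in $p$. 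Using $T=2s$ so that $T-s=s$ and $s-1\ge (T-2)/2$ collapses the denominator into a multiple of $(T-2)^{p+\alpha+1}$. Absorbing the various $2$-powers and polynomial factors (using $(p+\alpha)^{p+\alpha-1}\le (p+1)^{p+1}$ and $(p+1)!/(p-1)!=p(p+1)$) reproduces the constant $C_\nu(\epsilon)$ as defined and gives (\ref{eq:5.8}) upon taking the $(p+\alpha-1)/(p+\alpha)$-th root.

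Finally, the stopping-time bounds (\ref{eq:5.9}) and (\ref{eq:5.10}) come from using $g_T^{*}>\epsilon$ under (\ref{eq:5.6}): inverting (\ref{eq:5.8}) produces
\[ T \;\le\; 2+(p+1)\bigl(C_\nu(\epsilon)\,\|x_0-x^{*}\|^{p+\alpha-1}\epsilon^{-1}\bigr)^{\frac{p+\alpha}{(p+\alpha-1)(p+\alpha+1)}}, \]
and one substitutes the two cases $\alpha=\nu$ and $\alpha=1$ via the definitions (\ref{eq:3.6}) and (\ref{eq:4.21}). In the unknown case, the extra $\epsilon^{-(1-\nu)/(p+\nu-1)}$ factor hidden in $\tilde N_\nu(\epsilon)$ multiplies with the $\epsilon^{-1}$ outside to produce the announced $\epsilon^{-(p+1)/[(p+\nu-1)(p+2)]}$ rate. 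The conceptually delicate step is the passage $f(z_s)\le f(x_s)$, which relies crucially on the Step 5 selection of $\bar z_t$ combined with the Step 6.1 descent guarantee; the remainder is bookkeeping of constants to match the exact form of $C_\nu(\epsilon)$ and the outer exponents.
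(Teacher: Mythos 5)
Your proposal follows essentially the same route as the paper's own proof: the bridge $f(z_s)\le f(x_s)$ via the Step 5 selection and the Step 6.1 descent condition, the control of $2^{j_t}H_t$ by the Lemma \ref{lem:3.1} induction, the telescoping of (\ref{eq:4.1}) over $t=s,\ldots,T-1$ against the accelerated residual bound of Theorem \ref{thm:B4.2} at index $s$, and the final inversion with the two cases $\alpha=\nu$ and $\alpha=1$. The argument is correct; the only deviations are cosmetic (you keep $T-s=s$ terms where the paper conservatively uses $s-1$, which only improves the constant).
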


\begin{proof}
By Theorem \ref{thm:B4.2}, we have
\begin{equation}
f(x_{t})-f(x^{*})\leq\dfrac{2^{3p}\max\left\{\tilde{H}_{0},\tilde{N}_{\nu}(\epsilon)\right\}(p+\alpha)^{p+\alpha-1}\|x_{0}-x^{*}\|^{p+\alpha}}{(p-1)!(t-1)^{p+\alpha}}
\label{eq:5.12}
\end{equation} 
for $t=2,\ldots,T$.
On the other hand, as in Lemma \ref{lem:3.1}, by (\ref{eq:5.6}) we get 
\begin{equation*}
2^{j_{t}}H_{t}\leq2\max\left\{H_{0},N_{\nu}(\epsilon)\right\},\quad t=0,\ldots,T-1,
\end{equation*}
where $N_{\nu}(\epsilon)$ is defined in (\ref{eq:3.6}). Then, in view of (\ref{eq:4.1}), it follows that
\begin{equation}
f(\bar{z}_{t})-f(z_{t+1})\geq\dfrac{1}{8(p+1)!}\left[\dfrac{1}{2\max\left\{H_{0},N_{\nu}(\epsilon)\right\}}\right]^{\frac{1}{p+\alpha-1}}\|\nabla f(z_{t+1})\|_{*}^{\frac{p+\alpha}{p+\alpha-1}},
\label{eq:5.13}
\end{equation}
for $t=0,\ldots,T-1$. In particular, $f(z_{t+1})\leq f(\bar{z}_{t})$ for $t=0,\ldots,T-1$. Moreover, by the definition of $\bar{z}_{t}$, we get $f(\bar{z}_{t})\leq f(x_{t+1})$ and $f(\bar{z}_{t})\leq f(z_{t})$. Therefore
\begin{equation}
f(z_{t})\leq f(x_{t}),\quad t=0,\ldots,T,
\label{eq:5.14}
\end{equation}
and
\begin{equation}
f(z_{t})-f(z_{t+1})\geq\dfrac{1}{8(p+1)!}\left[\dfrac{1}{2\max\left\{H_{0},N_{\nu}(\epsilon)\right\}}\right]^{\frac{1}{p+\alpha-1}}\|\nabla f(z_{t+1})\|_{*}^{\frac{p+\alpha}{p+\alpha-1}}.
\label{eq:set5.15}
\end{equation}
Now, since $T=2s$, summing up (\ref{eq:set5.15}), we get
\begin{eqnarray*}
\dfrac{2^{3p}\max\left\{\tilde{H}_{0},\tilde{N}_{\nu}(\epsilon)\right\}(p+\alpha)^{p+\alpha-1}\|x_{0}-x^{*}\|^{p+\alpha}}{(p-1)!(s-1)^{p+\alpha}}&\geq & f(x_{s})-f(x^{*})\geq  f(z_{s})-f(x^{*})\\
           & = & f(z_{T})-f(x^{*})+\sum_{k=s}^{T-1}(f(z_{k})-f(z_{k+1}))\\
           &\geq & \dfrac{(s-1)}{8(p+1)!}\left[\dfrac{1}{2\max\left\{H_{0},N_{\nu}(\epsilon)\right\}}\right]^{\frac{1}{p+\alpha-1}}(g_{T}^{*})^{\frac{p+\alpha}{p+\alpha-1}}.
\end{eqnarray*}
Thus,
\begin{equation*}
(g_{T}^{*})^{\frac{p+\alpha}{p+\alpha-1}}\leq \dfrac{2^{(4p+6)}\max\left\{\tilde{H}_{0},\tilde{N}_{\nu}(\epsilon)\right\}\max\left\{H_{0},N_{\nu}(\epsilon)\right\}^{\frac{1}{p+\alpha-1}}(p+1)^{p+\alpha+1}\|x_{0}-x^{*}\|^{p+\alpha}}{(T-2)^{p+\alpha+1}},
\end{equation*}
and so (\ref{eq:5.8}) holds. By assumption, we have $g_{T}^{*}>\epsilon$. Thus, it follows from (\ref{eq:5.8}) that
\begin{equation*}
\epsilon<C_{\nu}(\epsilon)\|x_{0}-x^{*}\|^{p+\alpha-1}\left[\dfrac{p+1}{T-2}\right]^{\frac{(p+\alpha-1)(p+\alpha+1)}{(p+\alpha)}}
\end{equation*}
\begin{equation}
\Longrightarrow T<2+(p+1)\left[\dfrac{C_{\nu}(\epsilon)}{\epsilon}\right]^{\frac{p+\alpha}{(p+\alpha-1)(p+\alpha+1)}}\|x_{0}-x^{*}\|^{\frac{p+\alpha}{p+\alpha+1}}.
\label{eq:june5.16}
\end{equation}
If $\nu$ is known, by (\ref{eq:3.6}) and (\ref{eq:4.21}) we have $\max\left\{N_{\nu}(\epsilon),\tilde{N}_{\nu}(\epsilon)\right\}\leq 3p\left(H_{f,p}(\nu)+\theta (p-1)!\right)$.
Then,
\begin{equation*}
\max\left\{\tilde{H}_{0},\tilde{N}_{\nu}(\epsilon)\right\}\max\left\{H_{0},N_{\nu}(\epsilon)\right\}^{\frac{1}{p+\nu-1}}\leq\max\left\{\tilde{H}_{0},H_{0},3p\left(H_{f,p}(\nu)+\theta (p-1)!\right)\right\}^{\frac{p+\nu}{p+\nu-1}},
\end{equation*}
and so
\begin{equation}
C_{\nu}(\epsilon)\leq 2^{(4p+6)}\max\left\{\tilde{H}_{0},H_{0},3p\left(H_{f,p}(\nu)+\theta (p-1)!\right)\right\}.
\label{eq:june5.17}
\end{equation}
Combining (\ref{eq:june5.16}), (\ref{eq:june5.17}) and $\frac{p+\nu}{(p+\nu-1)(p+\nu+1)}\leq 1$, we obtain (\ref{eq:5.9}). If $\nu$ is unknown, it follows from (\ref{eq:3.6}) and (\ref{eq:4.21}) that
\begin{equation*}
\max\left\{N_{\nu}(\epsilon),\tilde{N}_{\nu}(\epsilon)\right\}\leq 4\left[(4H_{f,p}(\nu))^{\frac{p}{p+\nu-1}}+\theta (p-1)!\right]\epsilon^{-\frac{1-\nu}{p+\nu-1}}.
\end{equation*}
Then,
\begin{equation*}
\max\left\{\tilde{H}_{0},\tilde{N}_{\nu}(\epsilon)\right\}\max\left\{H_{0},N_{\nu}(\epsilon)\right\}^{\frac{1}{p}}\leq\left[\max\left\{\tilde{H}_{0},H_{0},4\left[(4H_{f,p}(\nu))^{\frac{p}{p+\nu-1}}+\theta (p-1)!\right]\right\}\epsilon^{-\frac{1-\nu}{p+\nu-1}}\right]^{\frac{p+1}{p}},
\end{equation*}
and so
\begin{equation}
C_{\nu}(\epsilon)\leq2^{(4p+6)}\max\left\{\tilde{H}_{0},H_{0},4\left[(4H_{f,p}(\nu))^{\frac{p}{p+\nu-1}}+\theta (p-1)!\right]\right\}\epsilon^{-\frac{1-\nu}{p+\nu-1}}.
\label{eq:june5.18}
\end{equation}
Combining (\ref{eq:june5.16}), (\ref{eq:june5.18}) and $\frac{p+1}{p(p+2)}<\frac{1}{2}$, we obtain (\ref{eq:june5.10}).
\end{proof}

\begin{remark}
When $\nu=1$, bounds (\ref{eq:5.9}) and (\ref{eq:june5.10}) have the same dependence on $\epsilon$. However, when $\nu\neq 1$, the bound of $\mathcal{O}\left(\epsilon^{-(p+1)/[(p+\nu-1)(p+2)]}\right)$ obtained for the universal scheme (i.e., $\alpha=1$) is worse than the bound of $\mathcal{O}\left(\epsilon^{-(p+\nu)/[(p+\nu-1)(p+\nu+1)]}\right)$ obtained for the non-universal scheme (i.e., $\alpha=\nu$). In both cases, these complexity bounds are better than the bound of $\mathcal{O}\left(\epsilon^{-1/(p+\nu-1)}\right)$ proved for Algorithm 1.
\end{remark}

\section{Composite Minimization}

From now on, we will assume that $\nu$ and $H_{f,p}(\nu)$ are known. In this setting, we can consider the composite minimization problem:
\begin{equation}
\min_{x\in\E}\,\tilde{f}(x)\equiv f(x)+\varphi(x),
\label{eq:ale5.1}
\end{equation}
where $f:\E\to\mathbb{R}$ is a convex function satisfying H1 (see page 4), and $\varphi:\E\to\mathbb{R}\cup\left\{+\infty\right\}$ is a simple closed convex function whose effective domain has nonempty relative interior, that is, $\text{ri}\left(\dom\varphi\right)\neq \emptyset$. We assume that there exists at least one optimal solution $x^{*}\in\E$ for (\ref{eq:ale5.1}). By (\ref{eq:2.4}), if $H\geq H_{f,p}(\nu)$ we have
\begin{equation*}
\tilde{f}(y)\leq \Omega_{x,p,H}^{(\nu)}(y)+\varphi(y),\quad\forall y\in\E.
\end{equation*}
This motivates the following class of models of $\tilde{f}$ around a fixed point $x\in\E$:
\begin{equation}
\tilde{\Omega}_{x,p,H}^{(\nu)}(y)\equiv \Omega_{x,p,H}^{(\nu)}(y)+\varphi(y), 
\label{eq:ale5.2}
\end{equation}
where $\Omega_{x,p,H}^{(\nu)}(\,.\,)$ is defined in (\ref{eq:2.7}). The next lemma gives a sufficient condition for function $\Omega_{x,p,H}^{(\nu)}(\,.\,)$ to be convex. Its proof is an adaptation of the proof of Theorem 1 in \cite{NES6}.

\begin{lemma}
\label{lem:ale5.1}
Suppose that H1 holds for some $p\geq 2$. Then, for any $x,y\in\E$ we have
\begin{equation}
\nabla^{2}f(y)\preceq \nabla^{2}\Phi_{x,p}(y)+\dfrac{H_{f,p}(\nu)}{(p-2)!}\|y-x\|^{p+\nu-2}B. 
\label{eq:ale5.3}
\end{equation}
Moreover, if $H\geq (p-1)H_{f,p}(\nu)$, then function $\Omega_{x,p,H}^{(\nu)}(\,.\,)$ is convex for any $x\in\E$.
\end{lemma}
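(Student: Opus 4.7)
The plan is to prove the two claims in order: (\ref{eq:ale5.3}) will be an immediate consequence of the already-established bound (\ref{eq:ale1}), and the convexity assertion will reduce to it together with a direct Hessian computation on the regularizer.

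For the first claim, (\ref{eq:ale1}) gives an operator-norm bound $\|\nabla^{2}f(y)-\nabla^{2}\Phi_{x,p}(y)\|\leq\frac{H_{f,p}(\nu)}{(p-2)!}\|y-x\|^{p+\nu-2}$. Since the chosen norm on $\E$ is the one induced by the self-adjoint operator $B\succ 0$ (see (\ref{eq:0.1})), any self-adjoint $A:\E\to\E^{*}$ with $\|A\|\leq c$ satisfies $-cB\preceq A\preceq cB$. Applied to $A=\nabla^{2}f(y)-\nabla^{2}\Phi_{x,p}(y)$, the upper side of this sandwich is exactly (\ref{eq:ale5.3}).

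For the convexity claim, I would first combine (\ref{eq:ale5.3}) with convexity of $f$ (so $\nabla^{2}f(y)\succeq 0$) to get
\[
\nabla^{2}\Phi_{x,p}(y)\;\succeq\;-\frac{H_{f,p}(\nu)}{(p-2)!}\|y-x\|^{p+\nu-2}\,B.
\]
Next, differentiating the regularizer $\frac{H}{p!}\|y-x\|^{p+\nu}$ twice, the chain rule in the Euclidean structure induced by $B$ produces a Hessian equal to $\frac{H(p+\nu)}{p!}\|y-x\|^{p+\nu-2}B$ plus a positive semidefinite rank-one term proportional to $B(y-x)(B(y-x))^{*}$ that I would simply drop. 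Adding the two lower bounds, the Hessian of $\Omega_{x,p,H}^{(\nu)}$ dominates
\[
\left[\frac{H(p+\nu)}{p!}-\frac{H_{f,p}(\nu)}{(p-2)!}\right]\|y-x\|^{p+\nu-2}\,B,
\]
which is $\succeq 0$ whenever $H\geq\frac{p(p-1)}{p+\nu}H_{f,p}(\nu)$. Since $\frac{p(p-1)}{p+\nu}\leq p-1$ for $\nu\in[0,1]$, the hypothesis $H\geq (p-1)H_{f,p}(\nu)$ is sufficient, proving the lemma.

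The main technical subtlety is making sense of the Hessian of $\|y-x\|^{p+\nu}$ at $y=x$ when $p+\nu<4$: the formal expression contains a factor $\|y-x\|^{p+\nu-4}$, which is singular, but it multiplies a rank-one term vanishing like $\|y-x\|^{2}$, so the Hessian is continuous at $y=x$ (and vanishes there when $p+\nu>2$). Assuming $p\geq 2$, so $p+\nu\geq 2$, the Hessian computation and the PSD lower bound above are valid on all of $\E$, and nothing else has to be checked.
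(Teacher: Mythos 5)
Your proof is correct and follows essentially the same route as the paper: the first part is the paper's argument (the operator inequality read off from (\ref{eq:ale1})), and the second part merely rearranges the paper's chain $0\preceq\nabla^{2}f(y)\preceq\nabla^{2}\Phi_{x,p}(y)+\frac{H(p+\nu)}{p!}\|y-x\|^{p+\nu-2}B\preceq\nabla^{2}\Omega_{x,p,H}^{(\nu)}(y)$ into an equivalent lower bound on $\nabla^{2}\Omega_{x,p,H}^{(\nu)}$. Your explicit sufficient condition $H\geq\frac{p(p-1)}{p+\nu}H_{f,p}(\nu)$ is marginally sharper than the stated hypothesis, and your remark on the behaviour of the Hessian of $\|y-x\|^{p+\nu}$ at $y=x$ makes explicit a point the paper leaves implicit.
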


\begin{proof}
For any $u\in\E$, it follows from (\ref{eq:ale1}) that 
\begin{equation*}
\la\left(\nabla^{2}f(y)-\nabla^{2}\Phi_{x,p}(y)\right)u,u\ra \leq  \|\nabla^{2}f(y)-\nabla^{2}\Phi_{x,p}(y)\|_{*}\|u\|^{2}\leq \dfrac{H_{f,p}(\nu)}{(p-2)!}\|y-x\|^{p+\nu-2}\|u\|^{2}.
\end{equation*}
Since $u\in\E$ is arbitrary, we get (\ref{eq:ale5.3}).

Now, suppose that $H\geq (p-1)H_{f,p}(\nu)$. Then, by (\ref{eq:ale5.3}) we have
\begin{eqnarray*}
0\preceq\nabla^{2}f(y)& \preceq &\nabla^{2}\Phi_{x,p}(y)+\dfrac{H_{f,p}(\nu)(p-1)}{(p-1)!}\|y-x\|^{p+\nu-2}B\\
                      &\preceq &\nabla^{2}\Phi_{x,p}(y)+\dfrac{H}{(p-1)!}\|y-x\|^{p+\nu-2}B\\
                      &\preceq & \nabla^{2}\Phi_{x,p}(y)+\dfrac{H(p+\nu)}{p!}\|y-x\|^{p+\nu-2}B\\
                      &\preceq & \nabla^{2}\Phi_{x,p}(y)+\nabla^{2}\left(\dfrac{H}{p!}\|y-x\|^{p+\nu}\right)= \nabla^{2}\Omega_{x,p,H}^{(\nu)}(y).
\end{eqnarray*}
Therefore, $\Omega_{x,p,H}^{(\nu)}(y)$ is convex.
\end{proof}

From Lemma \ref{lem:ale5.1}, if $H\geq (p-1)H_{f,p}(\nu)$ it follows that $\tilde{\Omega}_{x,p,H}^{(\nu)}(\,.\,)$ is also convex. In this case, since $\text{ri}\left(\dom\varphi\right)\neq\emptyset$, any solution $x^{+}$ of
\begin{equation}
\min_{y\in\E}\tilde{\Omega}_{x,p,H}^{(\nu)}(y)
\label{eq:aug5.3}
\end{equation}
satisfies the first-order optimality condition:
\begin{equation}
0\in\partial\tilde{\Omega}_{x,p,H}^{(\nu)}(x^{+})=\partial\Omega_{x,p,H}^{(\nu)}(x^{+})+\partial\varphi(x^{+})=\left\{\nabla\Omega_{x,p,H}^{(\nu)}(x^{+})\right\}+\partial\varphi (x^{+}).
\label{eq:ale5.4}
\end{equation}
Therefore, there exists $g_{\varphi}(x^{+})\in\partial\varphi(x^{+})$ such that
\begin{equation}
\nabla\Omega_{x,p,H}^{(\nu)}(x^{+})+g_{\varphi}(x^{+})=0.
\label{eq:ale5.5}
\end{equation}
Instead of solving (\ref{eq:aug5.3}) exactly, in our algorithms we consider inexact solutions $x^{+}$ such that\footnote{Conditions (\ref{eq:aug5.6}) have already been used in \cite{JIA} and are the composite analogue of the conditions proposed in \cite{Birgin}. It is worth to mention that, for $p=3$ and $\nu=1$, the tensor model $\Omega_{x,p,M}^{(\nu)}(\,.\,)$ has very nice relative smoothness properties (see \cite{NES6}) which allow the approximate solution of (\ref{eq:aug5.3}) by Bregman Proximal Gradient Algorithms \cite{LU,BOL}.}
\begin{equation}
\tilde{\Omega}_{x,p,H}^{(\nu)}(x^{+})\leq\tilde{f}(x)\quad\text{and}\quad \|\nabla\Omega_{x,p,H}^{(\nu)}(x^{+})+g_{\varphi}(x^{+})\|_{*}\leq\theta\|x^{+}-x\|^{p+\nu-1},
\label{eq:aug5.6}
\end{equation}
for some $g_{\varphi}(x^{+})\in\partial\varphi(x^{+})$ and $\theta\geq 0$. For such points $x^{+}$, we define
\begin{equation}
\nabla\tilde{f}(x^{+})\equiv \nabla f(x^{+})+g_{\varphi}(x^{+}),
\label{eq:ale5.7}
\end{equation}
with $g_{\varphi}(x^{+})$ satisfying (\ref{eq:aug5.6}). Clearly, we have $\nabla\tilde{f}(x^{+})\in\partial\tilde{f}(x^{+})$. 
\begin{lemma}
\label{lem:ale5.2}
Suppose that H1 holds and let $x^{+}$ be an approximate solution of (\ref{eq:aug5.3}) such that (\ref{eq:aug5.6}) holds for some $x\in\E$. If 
\begin{equation}
H\geq\max\left\{pH_{f,p}(\nu),3\theta (p-1)!\right\},
\label{eq:aug5.7}
\end{equation}
then
\begin{equation}
\tilde{f}(x)-\tilde{f}(x^{+})\geq\dfrac{1}{8(p+1)!H^{\frac{1}{p+\nu-1}}}\|\nabla\tilde{f}(x^{+})\|_{*}^{\frac{p+\nu}{p+\nu-1}}.
\label{eq:ale5.9}
\end{equation}
\end{lemma}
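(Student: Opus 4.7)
The plan is to extract two estimates in the variable $r\equiv\|x^{+}-x\|$ and then combine them: an upper bound on $\|\nabla\tilde{f}(x^{+})\|_{*}$ of order $r^{p+\nu-1}$ coming from the inexact optimality condition, and a lower bound on $\tilde{f}(x)-\tilde{f}(x^{+})$ of order $r^{p+\nu}$ coming from the descent condition. Eliminating $r$ between the two will produce precisely the inequality claimed in (\ref{eq:ale5.9}).

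For the gradient estimate, I start from the identity $\nabla\Omega_{x,p,H}^{(\nu)}(x^{+}) = \nabla\Phi_{x,p}(x^{+}) + \frac{H(p+\nu)}{p!}r^{p+\nu-2}B(x^{+}-x)$ and rearrange to write
\begin{equation*}
\nabla\tilde{f}(x^{+}) = \bigl[\nabla f(x^{+})-\nabla\Phi_{x,p}(x^{+})\bigr] + \bigl[\nabla\Omega_{x,p,H}^{(\nu)}(x^{+})+g_{\varphi}(x^{+})\bigr] - \dfrac{H(p+\nu)}{p!}r^{p+\nu-2}B(x^{+}-x).
\end{equation*}
Taking dual norms and bounding the three pieces via (\ref{eq:2.5}), the second inequality in (\ref{eq:aug5.6}), and direct calculation, I obtain
\begin{equation*}
\|\nabla\tilde{f}(x^{+})\|_{*} \leq \left[\dfrac{H_{f,p}(\nu)}{(p-1)!}+\theta+\dfrac{H(p+\nu)}{p!}\right]r^{p+\nu-1}.
\end{equation*}
The hypotheses $H\geq pH_{f,p}(\nu)$ and $H\geq 3\theta(p-1)!$ then collapse the bracket into a constant multiple of $H/p!$, leaving an upper bound of the form $c(p,\nu)\,H\,r^{p+\nu-1}/p!$.

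For the functional decrease, the first inequality in (\ref{eq:aug5.6}) gives $\tilde{\Omega}_{x,p,H}^{(\nu)}(x^{+})\leq\tilde{f}(x)$, while (\ref{eq:2.4}) gives $f(x^{+})\leq\Phi_{x,p}(x^{+})+\frac{H_{f,p}(\nu)}{p!}r^{p+\nu}$. Subtracting and cancelling the common term $\varphi(x^{+})$,
\begin{equation*}
\tilde{f}(x)-\tilde{f}(x^{+}) \geq \tilde{\Omega}_{x,p,H}^{(\nu)}(x^{+})-\tilde{f}(x^{+}) \geq \dfrac{H-H_{f,p}(\nu)}{p!}r^{p+\nu} \geq \dfrac{H(p-1)}{p\cdot p!}r^{p+\nu},
\end{equation*}
where the last step uses $H_{f,p}(\nu)\leq H/p$. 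Since $p\geq 2$ is implicit throughout the composite framework (cf.\ Lemma \ref{lem:ale5.1}), the prefactor is at least $H/(2\cdot p!)$.

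To close the argument, I raise the gradient estimate to the power $(p+\nu)/(p+\nu-1)$ to obtain a lower bound for $r^{p+\nu}$, substitute into the decrease inequality, and collect the net power of $H$, which yields exactly the factor $H^{-1/(p+\nu-1)}$ appearing in (\ref{eq:ale5.9}). What remains is a purely numerical verification that the resulting absolute constant is at most $1/[8(p+1)!]$. This is the main obstacle: concretely, I must check that the quantity $[(3+4p+3\nu)/(3\,p!)]^{(p+\nu)/(p+\nu-1)}$ is dominated by $\frac{8(p+1)(p-1)}{p}$ uniformly for $p\geq 2$ and $\nu\in[0,1]$. I anticipate handling $p=2$ by direct substitution and exploiting the factorial decay of $(3+4p+3\nu)/(3\,p!)$ to dispatch larger $p$, so this last step is routine but is the only piece requiring careful bookkeeping.
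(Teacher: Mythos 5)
Your argument is correct and follows essentially the same route as the paper's proof: the same three-term decomposition of $\nabla\tilde f(x^{+})$ yielding an upper bound of order $H\,r^{p+\nu-1}$ from (\ref{eq:2.5}) and the second condition in (\ref{eq:aug5.6}), the same lower bound $\tilde f(x)-\tilde f(x^{+})\geq \frac{H-H_{f,p}(\nu)}{p!}r^{p+\nu}$ from the first condition in (\ref{eq:aug5.6}) combined with (\ref{eq:2.4}), and the same elimination of $r=\|x^{+}-x\|$. The residual numerical check you flag does hold (for $p=2$ the base is at most $7/3$ and the exponent at most $2$, so the left-hand side is at most $49/9<12$, while for $p\geq 3$ the base is at most $1$); the paper merely absorbs these constants more crudely, bounding the bracket by $2H$ and $H-H_{f,p}(\nu)$ by $H/(p+1)$ before arriving at the same factor $8(p+1)!$.
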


\begin{proof}
By (\ref{eq:ale5.7}), (\ref{eq:2.5}), (\ref{eq:2.7}), (\ref{eq:aug5.6}) and (\ref{eq:aug5.7}) we have
\begin{eqnarray}
\|\nabla\tilde{f}(x^{+})\|_{*}& = & \|\nabla f(x^{+})+g_{\varphi}(x^{+})\|_{*}\nonumber\\
                              &\leq & \|\nabla f(x^{+})-\nabla\Phi_{x,p}(x^{+})\|_{*}+\|\nabla\Phi_{x,p}(x^{+})-\nabla\Omega_{x,p,H}^{(\nu)}(x^{+})\|_{*}\nonumber\\
                              &  & +\|\nabla\Omega_{x,p,H}^{(\nu)}(x^{+})+g_{\varphi}(x^{+})\|_{*}\nonumber\\
                              &\leq &\left[\dfrac{H_{f,p}(\nu)}{(p-1)!}+\dfrac{H(p+\nu)}{p!}+\theta\right]\|x^{+}-x\|^{p+\nu-1}\leq  2H\|x^{+}-x\|^{p+\nu-1},
\label{eq:ale5.10}
\end{eqnarray}
where the last inequality is due to $p\geq 2$. On the other hand, by (\ref{eq:2.4}), (\ref{eq:ale5.2}), (\ref{eq:aug5.7}), we have
\begin{eqnarray*}
\tilde{f}(x^{+})&\leq &\tilde{\Omega}_{x,p,H_{f,p}(\nu)}^{(\nu)}(x^{+})=\Phi_{x,p}(x^{+})+\dfrac{H_{f,p}(\nu)}{p!}\|x^{+}-x\|^{p+\nu}+\varphi(x^{+})\\
                & = & \Phi_{x,p}(x^{+})+\dfrac{H}{p!}\|x^{+}-x\|^{p+\nu}-\dfrac{(H-H_{f,p}(\nu))}{p!}\|x^{+}-x\|^{p+\nu}+\varphi(x^{+})\\
                & = & \tilde{\Omega}_{x,p,H}^{(\nu)}(x^{+})-\dfrac{H-H_{f,p}(\nu)}{p!}\|x^{+}-x\|^{p+\nu}\leq \tilde{f}(x^{+})-\dfrac{H-H_{f,p}(\nu)}{p!}\|x^{+}-x\|^{p+\nu}.
\end{eqnarray*}
Note that $H\geq pH_{f,p}(\nu)\geq\left(\frac{p+1}{p}\right)H_{f,p}(\nu)$. Thus,
\begin{eqnarray}
\tilde{f}(x)-\tilde{f}(x^{+})&\geq&\dfrac{H-H_{f,p}(\nu)}{p!}\|x^{+}-x\|^{p+\nu}\geq\dfrac{\left(H-\frac{1}{p+1}H\right)}{p!}\|x^{+}-x\|^{p+\nu}\nonumber\\
&=&\dfrac{H}{(p+1)!}\|x^{+}-x\|^{p+\nu}.
\label{eq:ale5.11}
\end{eqnarray}
Finally, combining (\ref{eq:ale5.10}) and (\ref{eq:ale5.11}), we get (\ref{eq:ale5.9}).
\end{proof}

In this composite context, let us consider the following scheme:
\begin{mdframed}
\noindent\textbf{Algorithm 3. Tensor Method for Composite Minimization}
\\[0.2cm]
\noindent\textbf{Step 0.} Choose $x_{0}\in\E$ and $\theta\geq 0$. Set $M=\max\left\{pH_{f,p}(\nu),3\theta (p-1)!\right\}$ and $t:=0$.\\
\noindent\textbf{Step 1.} Compute an approximate solution $x_{t+1}$ to $\min_{y\in\E}\tilde{\Omega}_{x_{t},p,M}^{(\nu)}(y)$ such that
\begin{equation*}
\tilde{\Omega}_{x_{t},p,M}^{(\nu)}(x_{t+1})\leq\tilde{f}(x_{t})\quad\text{and}\quad \|\nabla\Omega_{x_{t},p,M}^{(\nu)}(x_{t+1})+g_{\varphi}(x_{t+1})\|_{*}\leq\theta\|x_{t+1}-x_{t}\|^{p+\nu-1},
\end{equation*}
for some $g_{\varphi}(x_{t+1})\in\partial\varphi(x_{t+1})$.\\
\noindent\textbf{Step 2.} Set $t:=t+1$ and go back to Step 1.
\end{mdframed}
For $p=3$, point $x_{t+1}$ at Step 1 can be computed by Algorithm 2 in \cite{GN4}, which is linearly convergent. As far as we know, the development of efficient algorithms to approximately solve (\ref{eq:aug5.3})-(\ref{eq:ale5.2}) with $p>3$ is still an open problem.
\begin{theorem}
\label{thm:ale5.1}
Suppose that H1 holds and that $\tilde{f}$ is bounded from below by $\tilde{f}^{*}$. Given $\epsilon\in (0,1)$, assume that $\left\{x_{t}\right\}_{t=0}^{T}$ is a sequence generated by Algorithm 3 such that $\|\nabla\tilde{f}(x_{t})\|_{*}>\epsilon$ for $t=0,\ldots,T$. Then,
\begin{equation}
T\leq 8(p+1)!M^{\frac{1}{p+\nu-1}}\left(\tilde{f}(x_{0})-\tilde{f}^{*}\right)\epsilon^{-\frac{p+\nu}{p+\nu-1}}.
\label{eq:ale5.12}
\end{equation}
\end{theorem}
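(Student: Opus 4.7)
The plan is to recognize this as a direct descent-and-telescope argument built on Lemma \ref{lem:ale5.2}, so the main task is really just to verify that the hypotheses line up and to perform the summation correctly. The choice $M=\max\{pH_{f,p}(\nu),3\theta(p-1)!\}$ made in Step 0 of Algorithm 3 is precisely the lower bound required for $H$ in condition (\ref{eq:aug5.7}) of Lemma \ref{lem:ale5.2}, and Step 1 ensures that each $x_{t+1}$ is an approximate minimizer of $\tilde{\Omega}_{x_t,p,M}^{(\nu)}$ in exactly the sense of (\ref{eq:aug5.6}). Hence, for every $t=0,\ldots,T-1$, Lemma \ref{lem:ale5.2} applies with $x=x_t$, $x^+=x_{t+1}$, $H=M$, yielding
\[
\tilde f(x_t)-\tilde f(x_{t+1})\;\geq\;\dfrac{1}{8(p+1)!M^{\frac{1}{p+\nu-1}}}\,\|\nabla\tilde f(x_{t+1})\|_*^{\frac{p+\nu}{p+\nu-1}}.
\]

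Next, I would use the standing assumption $\|\nabla\tilde f(x_t)\|_*>\epsilon$ for $t=0,\ldots,T$. In particular it holds at $t=1,\ldots,T$, so I may replace $\|\nabla\tilde f(x_{t+1})\|_*$ by $\epsilon$ in the inequality above to obtain a uniform per-iteration decrease
\[
\tilde f(x_t)-\tilde f(x_{t+1})\;\geq\;\dfrac{\epsilon^{\frac{p+\nu}{p+\nu-1}}}{8(p+1)!M^{\frac{1}{p+\nu-1}}},\qquad t=0,\ldots,T-1.
\]
Summing these $T$ inequalities telescopes on the left, and using $\tilde f(x_T)\geq\tilde f^*$ one arrives at
\[
\tilde f(x_0)-\tilde f^*\;\geq\;\tilde f(x_0)-\tilde f(x_T)\;\geq\;\dfrac{T\,\epsilon^{\frac{p+\nu}{p+\nu-1}}}{8(p+1)!M^{\frac{1}{p+\nu-1}}}.
\]
Solving this for $T$ gives exactly (\ref{eq:ale5.12}).

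There is essentially no obstacle here beyond bookkeeping; the conceptual work has already been done in Lemma \ref{lem:ale5.2}, which upgrades the inexact subproblem condition (\ref{eq:aug5.6}) into a gradient-norm-to-decrement inequality. The one place to be slightly careful is the indexing: the lemma controls $\|\nabla\tilde f(x_{t+1})\|_*$ rather than $\|\nabla\tilde f(x_t)\|_*$, so we need the hypothesis to cover the terminal index $T$ in order to bound the $(T-1)$st descent term — which is exactly why the statement quantifies over $t=0,\ldots,T$ rather than $t=0,\ldots,T-1$.
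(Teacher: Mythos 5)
Your proof is correct and is exactly the argument the paper intends: its own proof simply says the bound "follows as in Remark \ref{rem:ale1}" from Lemma \ref{lem:ale5.2}, which is precisely the telescoping descent argument you carried out. The indexing point you flag (needing the gradient hypothesis at $t=T$ to control the last decrement) is also handled correctly.
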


\begin{proof}
By Lemma \ref{lem:ale5.2}, bound (\ref{eq:ale5.12}) follows as in Remark \ref{rem:ale1}.
\end{proof}

\subsection{Extended Accelerated Scheme}

Let us consider the following variant of Algorithm 2 for composite minimization:
\begin{mdframed}
\noindent\textbf{Algorithm 4. Two-Phase Accelerated Tensor Method}
\\[0.2cm]
\noindent\textbf{Step 0.} Choose $x_{0}\in\E$, $\theta\geq 0$ and $\epsilon\in (0,1)$. Define $\psi_{0}(x)=\frac{1}{p+\nu}\|x-x_{0}\|^{p+\nu}$. Set $v_{0}=z_{0}=x_{0}$, $A_{0}=0$, $M=p\left(H_{f,p}(\nu)+3\theta(p-1)!\right)$ and $t:=0$.\\
\noindent\textbf{Step 1.} If $t>0$ and $\min\left\{\|\nabla \tilde{f}(x_{t})\|_{*},\|\nabla \tilde{f}(z_{t})\|_{*}\right\}\leq\epsilon$, STOP.\\
\noindent\textbf{Step 2.} Compute the coefficient $a_{t}>0$ by solving the equation 
\begin{equation*}
a_{t}^{p+\nu}=\dfrac{1}{2^{(3p-1)}}\left[\dfrac{(p-1)!}{M}\right](A_{t}+a_{t})^{p+\nu-1}.
\end{equation*}
\noindent\textbf{Step 3.} Set $y_{t}=(1-\gamma_{t})x_{t}+\gamma_{t}v_{t}$, with $\gamma_{t}=a_{t}/[A_{t}+a_{t}]$.\\
\noindent\textbf{Step 4.} Compute an approximate solution $x_{t+1}$ to $\min_{x\in\E}\tilde{\Omega}_{y_{t},p,M}^{(\nu)}(x)$ such that
\begin{equation}
\tilde{\Omega}_{y_{t},p,M}^{(\nu)}(x_{t+1})\leq\tilde{f}(y_{t})\quad\text{and}\quad \|\nabla\Omega_{y_{t},p,M}^{(\nu)}(x_{t+1})+g_{\varphi}(x_{t+1})\|\leq\theta\|x_{t+1}-y_{t}\|^{p+\nu-1}.
\label{eq:ale5.13}
\end{equation}
for some $g_{\varphi}(x_{t+1})\in\partial\varphi(x_{t+1})$.\\
\noindent\textbf{Step 5.}  Define $\psi_{t+1}(x)=\psi_{t}(x)+a_{t}\left[f(x_{t+1})+\langle\nabla f(x_{t+1}),x-x_{t+1}\rangle + \varphi(x)\right]$ and compute $v_{t+1}=\arg\min_{x\in\E}\,\psi_{t+1}(x)$.\\
\noindent\textbf{Step 6.} Set $\bar{z}_{t}=\arg\min\left\{\tilde{f}(y)\,:\,y\in\left\{z_{t},x_{t+1}\right\}\right\}$ and $j:=0$.\\
\noindent\textbf{Step 7} Compute an approximate solution $z_{t+1}$ to $\min_{x\in\E}\tilde{\Omega}_{\bar{z}_{t},p,M}^{(\nu)}(x)$ such that
\begin{equation}
\tilde{\Omega}_{\bar{z}_{t},p,M}^{(\nu)}(z_{t+1})\leq\tilde{f}(\bar{z}_{t})\quad\text{and}\quad \|\nabla\Omega_{\bar{z}_{t},p,M}^{(\nu)}(z_{t+1})+g_{\varphi}(z_{t+1})\|\leq\theta\|z_{t+1}-\bar{z}_{t}\|^{p+\nu-1}.
\label{eq:ale5.14}
\end{equation}
for some $g_{\varphi}(z_{t+1})\in\partial\varphi(z_{t+1})$.\\
\noindent\textbf{Step 8} Set $t:=t+1$ and go to Step 1.
\end{mdframed}

The next theorem gives the global convergence rate for Algorithm 4 in terms of the norm of the gradient. Its proof is a direct adaptation of the proof of Theorem 4.2.

\begin{theorem}
\label{thm:ale5.4}
Suppose that H1 holds. Assume that $\left\{z_{t}\right\}_{t=0}^{T}$ is a sequence generated by Algorithm 4 such that
\begin{equation}
\|\nabla\tilde{f}(z_{t})\|_{*}>\epsilon,\quad t=0,\ldots,T.
\label{eq:ale5.15}
\end{equation}
If $T=2s$ for some $s>1$, then
\begin{equation}
\tilde{g}_{T}\equiv \min_{0\leq k\leq T}\|\nabla\tilde{f}(z_{t})\|_{*}\leq\dfrac{\left[2^{4(p+1)}\right]^{\frac{p+\nu-1}{p+\nu}}M\|x_{0}-x^{*}\|^{p+\nu-1}}{(T-2)^{\frac{(p+\nu-1)(p+\nu+1)}{p+\nu}}}.
\label{eq:ale5.16}
\end{equation}
Consequently,
\begin{equation}
T\leq 2+\left[2^{4(p+1)}\right]^{\frac{1}{p+\nu+1}}\|x_{0}-x^{*}\|^{\frac{p+\nu}{p+\nu+1}}\left(\dfrac{M}{\epsilon}\right)^{\frac{p+\nu}{(p+\nu-1)(p+\nu+1)}}.
\label{eq:ale5.17}
\end{equation}
\end{theorem}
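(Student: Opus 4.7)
The plan is to follow the blueprint of the proof of Theorem \ref{thm:5.1}, with two key simplifications afforded by the composite setting: the regularization parameter is fixed at $M$ throughout (so no adaptive line-search constants $\max\{\tilde H_0,\tilde N_\nu(\epsilon)\}$ or $\max\{H_0,N_\nu(\epsilon)\}$ appear), and the per-step gradient decrease is provided in one shot by Lemma \ref{lem:ale5.2} rather than indirectly by an inner-loop stopping criterion.

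First I will establish the accelerated rate on the functional residual along $\{x_t\}$. Algorithm 4 mirrors the ``first phase'' of Algorithm 2 with the choices $i_t=0$ and $\tilde H_t=M$ for all $t$, and the composite analogue of Theorem~4.2 of \cite{GN3} (essentially the same argument used in Theorem \ref{thm:B4.2}, but with $f$ replaced by $\tilde f$ and the estimating sequence $\psi_t$ in Step~5 incorporating $\varphi$) gives, for $t=2,\ldots,T$,
$$
\tilde f(x_t)-\tilde f(x^{*})\;\le\;\frac{2^{3p}\,M\,(p+\nu)^{p+\nu-1}\,\|x_0-x^{*}\|^{p+\nu}}{(p-1)!\,(t-1)^{p+\nu}}.
$$
Since $M=p(H_{f,p}(\nu)+3\theta(p-1)!)\ge\max\{pH_{f,p}(\nu),3\theta(p-1)!\}$, condition (\ref{eq:aug5.7}) holds, and I can apply Lemma \ref{lem:ale5.2} to Step~7 with $x=\bar z_t$, $x^{+}=z_{t+1}$, $H=M$ to obtain
$$
\tilde f(\bar z_t)-\tilde f(z_{t+1})\;\ge\;\frac{1}{8(p+1)!\,M^{1/(p+\nu-1)}}\,\|\nabla\tilde f(z_{t+1})\|_{*}^{(p+\nu)/(p+\nu-1)},\qquad t=0,\ldots,T-1.
$$

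Next I exploit the definition $\bar z_t=\arg\min\{\tilde f(y):y\in\{z_t,x_{t+1}\}\}$, which yields both $\tilde f(\bar z_t)\le\tilde f(z_t)$ and $\tilde f(\bar z_t)\le\tilde f(x_{t+1})$. From the first of these and the Lemma \ref{lem:ale5.2} estimate I get the monotonicity $\tilde f(z_{t+1})\le\tilde f(z_t)$; from the second, combined with $\tilde f(z_{t+1})\le\tilde f(\bar z_t)$, a trivial induction yields $\tilde f(z_t)\le\tilde f(x_t)$ for all $t$. Setting $T=2s$ and summing the decrease inequality above for $k=s,\ldots,T-1$ telescopes the left-hand side and gives
$$
\tilde f(x_s)-\tilde f(x^{*})\;\ge\;\tilde f(z_s)-\tilde f(x^{*})\;\ge\;\frac{T-s}{8(p+1)!\,M^{1/(p+\nu-1)}}\,\tilde g_T^{(p+\nu)/(p+\nu-1)}.
$$
Plugging in the accelerated bound at $t=s$, using $T-s=s$ and $s-1\ge(T-2)/2$, and raising both sides to the power $(p+\nu-1)/(p+\nu)$ produces (\ref{eq:ale5.16}). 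Finally, (\ref{eq:ale5.17}) follows by substituting the hypothesis $\tilde g_T>\epsilon$ from (\ref{eq:ale5.15}) into (\ref{eq:ale5.16}) and solving for $T$.

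The main obstacle is the bookkeeping of constants: the factor $2^{3p}(p+\nu)^{p+\nu-1}/(p-1)!$ from the accelerated estimate, the factor $8(p+1)!$ from Lemma \ref{lem:ale5.2}, a factor $2^{p+\nu+1}$ from the bound $s-1\ge(T-2)/2$, and the exponent $(p+\nu-1)/(p+\nu)$ acquired when the root is taken, must all be absorbed into the single constant $2^{4(p+1)}$ displayed in (\ref{eq:ale5.16}); verifying that the resulting arithmetic exponents are bounded by $4(p+1)$ uniformly in $p\ge 2$ and $\nu\in[0,1]$ is where the only nontrivial bookkeeping lies. The structural argument itself is otherwise a clean transcription of the proof of Theorem \ref{thm:5.1}.
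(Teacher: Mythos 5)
Your proposal is correct and follows essentially the same route as the paper's own proof: the accelerated functional-residual bound for $\{x_t\}$ (Theorem \ref{thm:B2}), the one-shot decrease estimate from Lemma \ref{lem:ale5.2} applied at Step 7, the monotonicity relations $\tilde f(z_t)\le \tilde f(x_t)$ and $\tilde f(z_{t+1})\le \tilde f(z_t)$, and the telescoping sum over $k=s,\ldots,T-1$ with $T=2s$. The only differences are cosmetic bookkeeping (the paper lower-bounds the number of summed terms by $s-1$ rather than $s$, and its stated constant is $2^{3p-1}$ rather than $2^{3p}$), which do not affect the argument.
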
 

\begin{proof}
In view of Theorem \ref{thm:B2}, we have
\begin{equation}
\tilde{f}(x_{t})-\tilde{f}(x^{*})\leq\dfrac{2^{3p-1}M(p+\nu)^{p+\nu-1}\|x_{0}-x^{*}\|^{p+\nu}}{(p-1)!(t-1)^{p+\nu}},
\label{eq:ale5.18}
\end{equation}
for $t=2,\ldots,T$. On the other hand, by (\ref{eq:ale5.14}) and Lemma \ref{lem:ale5.2}, we have
\begin{equation}
\tilde{f}(\bar{z}_{t})-\tilde{f}(z_{t+1})\geq\dfrac{1}{8(p+1)!M^{\frac{1}{p+\nu-1}}}\|\nabla\tilde{f}(z_{t+1})\|_{*}^{\frac{p+\nu}{p+\nu-1}},
\label{eq:ale5.19}
\end{equation}
for $t=0,\ldots,T-1$. Thus, $f(z_{t+1})\leq f(\bar{z}_{t})\leq\min\left\{f(x_{t+1}),f(z_{t})\right\}$ and, consequently,
\begin{equation}
\tilde{f}(z_{t})\leq \tilde{f}(x_{t}),\quad t=0,\ldots,T,
\label{eq:ale5.20}
\end{equation}
and
\begin{equation}
\tilde{f}(z_{t})-\tilde{f}(z_{t+1})\geq\dfrac{1}{8(p+1)!M^{\frac{1}{p+\nu-1}}}\|\nabla\tilde{f}(z_{t+1})\|_{*}^{\frac{p+\nu}{p+\nu-1}},\quad t=0,\ldots,T-1.
\label{eq:ale5.21}
\end{equation}
Since $T=2s$, combining (\ref{eq:ale5.18}), (\ref{eq:ale5.20}) and (\ref{eq:ale5.21}), we obtain
\begin{eqnarray*}
\dfrac{2^{3p-1}M(p+\nu)^{p+\nu-1}\|x_{0}-x^{*}\|^{p+\nu}}{(p-1)!(s-1)^{p+\nu}}&\geq &\tilde{f}(x_{s})-\tilde{f}(x^{*})\geq \tilde{f}(z_{s})-\tilde{f}(x^{*})\\
& = & \tilde{f}(z_{T})-\tilde{f}(x^{*})+\sum_{k=s}^{T-1}\tilde{f}(z_{t})-\tilde{f}(z_{t+1})\\
&\geq & \dfrac{(s-1)}{8(p+1)!M^{\frac{1}{p+\nu-1}}}\left(\tilde{g}_{T}\right)^{\frac{p+\nu}{p+\nu-1}},
\end{eqnarray*} 
where $\tilde{g}_{T}=\min_{0\leq k\leq T}\|\nabla\tilde{f}(z_{t})\|_{*}$. Therefore,
\begin{equation*}
\left(\tilde{g}_{T}\right)^{\frac{p+\nu}{p+\nu-1}}\leq \dfrac{2^{4(p+1)}M^{\frac{p+\nu}{p+\nu-1}}(p+1)^{p+\nu+1}\|x_{0}-x^{*}\|^{p+\nu}}{(T-2)^{p+\nu+1}},
\end{equation*}
which gives (\ref{eq:ale5.16}). Finally, by (\ref{eq:ale5.15}) we have $\tilde{g}_{T}>\epsilon$. Thus, (\ref{eq:ale5.17}) follows directly from (\ref{eq:ale5.16}).
\end{proof}

\subsection{Regularization Approach}

Now, let us consider the ideal situation in which $\nu$, $H_{f,p}(\nu)$ and $R\geq \|x_{0}-x^{*}\|$ are known. In this case, a complexity bound with a better dependence on $\epsilon$ can be obtained by repeatedly applying an accelerated algorithm to a suitable regularization of $\tilde{f}$. Specifically, given $\delta>0$, consider the regularized problem
\begin{equation}
\min_{x\in\mathbb{R}^{n}}\,\tilde{F}_{\delta}(x)\equiv F_{\delta}(x)+\varphi(x),
\label{eq:5.17}
\end{equation}
for
\begin{equation}
F_{\delta}(x)=f(x)+\dfrac{\delta}{p+\nu}\|x-x_{0}\|^{p+\nu}.
\label{eq:july1}
\end{equation}

\begin{lemma}
Given $x_{0}\in\E$ and $\nu\in [0,1]$, let $d_{p+\nu}:\E\to\mathbb{R}$ be defined by $d_{p+\nu}(x)=\|x-x_{0}\|^{p+\nu}$, where $\|\,.\,\|$ is the Euclidean norm defined in (\ref{eq:0.1}). Then,
\begin{equation*}
\|D^{p}d_{p+\nu}(x)-D^{p}d_{p+\nu}(y)\|\leq C_{p,\nu}\|x-y\|^{\nu},\quad\forall x,y\in\E,
\end{equation*}
where $C_{p,\nu}=2\Pi_{i=1}^{p}(\nu+i)$.
\end{lemma}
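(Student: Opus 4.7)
The plan is to apply the identity $\|D^p f(x) - D^p f(y)\| = \sup_{\|h\|\leq 1}|D^p f(x)[h]^p - D^p f(y)[h]^p|$ from the Notations subsection, reducing the problem to the scalar comparison $|\phi_x^{(p)}(0) - \phi_y^{(p)}(0)|$ for $\phi_x(t) := \|x - x_0 + th\|^{p+\nu}$ and a fixed $h$ with $\|h\| = 1$ (the case $\|h\| < 1$ follows from the $p$-homogeneity of $h \mapsto D^p f(x)[h]^p$). Performing the $B$-orthogonal decomposition $x - x_0 = \alpha_x h + \xi_x$ with $\langle B\xi_x, h\rangle = 0$, and setting $r_x := \|\xi_x\|$, one has $\|x - x_0 + th\|^2 = (\alpha_x + t)^2 + r_x^2$, so $D^p d_{p+\nu}(x)[h]^p = H(\alpha_x, r_x)$, where $H(s, \gamma) := \partial_s^p (s^2 + \gamma^2)^{(p+\nu)/2}$. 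A short calculation combining Cauchy--Schwarz, the reverse triangle inequality, and the Pythagorean identity $\|\xi_x - \xi_y\|^2 = \|x - y\|^2 - (\alpha_x - \alpha_y)^2$ (which itself follows from $\langle B\xi_\bullet, h\rangle = 0$) gives $(\alpha_x - \alpha_y)^2 + (r_x - r_y)^2 \leq \|x - y\|^2$. Hence the lemma reduces to proving the planar H\"older estimate $|H(z_1) - H(z_2)| \leq C_{p,\nu}\|z_1 - z_2\|^\nu$ for $z_i = (\alpha_i, r_i)$ lying in the closed upper half-plane $\R \times \R_{\geq 0}$ with the Euclidean metric.

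To analyse $H$, I Taylor-expand the map $t \mapsto ((s + t)^2 + \gamma^2)^{(p+\nu)/2}$ around $t = 0$. Using $(s + t)^2 + \gamma^2 = \rho^2(1 + 2(s/\rho)u + u^2)$ with $u := t/\rho$ and $\rho := \sqrt{s^2 + \gamma^2}$, and then reading off the $p$-th Taylor coefficient in $t$, one obtains $H(s, \gamma) = \rho^\nu\, Q_p(s/\rho)$, where $Q_p(c) := p!\,[u^p]\,(1 + 2cu + u^2)^{(p+\nu)/2}$. Specialising at $c = 1$ gives $(1 + u)^{p+\nu}$ and hence $Q_p(1) = \prod_{i=1}^p(\nu + i)$; similarly $Q_p(-1) = (-1)^p\prod_{i=1}^p(\nu + i)$. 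Thus $Q_p$ is a polynomial of degree $p$ on $[-1, 1]$ with $\|Q_p\|_\infty \leq \prod_{i=1}^p(\nu + i)$ and a Lipschitz bound $\|Q_p'\|_\infty \leq c_p\prod_{i=1}^p(\nu + i)$ for some $c_p$ depending only on $p$.

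To finish, I would split the H\"older estimate on $H$ into two regimes determined by whether $\|z_1 - z_2\|$ is small or large relative to $\min(\rho_1, \rho_2)$. In the regime $\|z_1 - z_2\| > \tfrac{1}{2}\min(\rho_1, \rho_2)$, the triangle inequality yields $|H(z_1) - H(z_2)| \leq (\rho_1^\nu + \rho_2^\nu)\prod_{i=1}^p(\nu + i)$, and the hypothesis forces $\max(\rho_1, \rho_2) \leq 3\|z_1 - z_2\|$, which gives the estimate directly. In the complementary regime $\|z_1 - z_2\| \leq \tfrac{1}{2}\min(\rho_1, \rho_2)$, both points lie uniformly away from the origin along the joining segment, on which the radial bound $|\nabla_z H(z)| \leq C\rho^{\nu - 1}$ (with $C$ controlled by $\|Q_p\|_\infty$ and $\|Q_p'\|_\infty$) is integrable; the elementary inequality $\|z_1 - z_2\|^{1-\nu}\min(\rho_i)^{\nu-1} \leq 1$ then converts the resulting Lipschitz-type bound into the desired $\nu$-H\"older bound.

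The main obstacle is the bookkeeping needed to match constants across these two regimes and thereby land on $C_{p,\nu} = 2\prod_{i=1}^p(\nu+i)$ rather than some larger expression. The factor $2$ is exactly what is needed to absorb both the two contributions from the triangle inequality $|H(z_1)| + |H(z_2)|$ in the first regime, and the sign jump of $H$ across $\gamma = 0$ when $p$ is odd, which mirrors the one-dimensional analysis of $|t|^{p + \nu}$ (where inputs of opposite sign force the factor $2$ already in the scalar case).
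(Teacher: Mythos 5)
The paper does not actually prove this lemma: its ``proof'' is the single line ``See \cite{NES9}'', deferring entirely to Rodomanov and Nesterov. Your proposal is therefore necessarily a different, self-contained route, and its skeleton is sensible: the reduction to the diagonal form $D^{p}d_{p+\nu}(x)[h]^{p}$, the passage to the two scalars $(\alpha_x,r_x)$ via the $B$-orthogonal decomposition along $h$, the metric inequality $(\alpha_x-\alpha_y)^2+(r_x-r_y)^2\leq\|x-y\|^2$, the homogeneity identity $H(s,\gamma)=\rho^{\nu}Q_p(s/\rho)$ with the generating-function formula for $Q_p$, and the evaluations $Q_p(\pm1)=(\pm1)^p\prod_{i=1}^{p}(\nu+i)$ are all correct.

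There are, however, two genuine gaps. First, the assertion ``Thus $\|Q_p\|_\infty\leq\prod_{i=1}^{p}(\nu+i)$'' does not follow from knowing $Q_p(\pm1)$: a polynomial's sup norm on $[-1,1]$ is not controlled by its endpoint values. The claim is equivalent to saying that $\|D^{p}d_{p+\nu}(x)\|$ is attained for $h$ parallel to $x-x_0$, which is true but is itself a nontrivial result (essentially one of the theorems of \cite{NES9}). For instance, $Q_3(c)=(3+\nu)(1+\nu)\left(3c+(\nu-1)c^3\right)$ has coefficients of mixed sign, and bounding it on $[-1,1]$ by its value at $c=1$ requires a monotonicity argument (here $Q_3'(c)\geq 3\nu(3+\nu)(1+\nu)\geq 0$), not an appeal to the endpoints; you give no such argument for general $p$. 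Second, even granting that bound, your two-regime scheme cannot produce the stated constant $C_{p,\nu}=2\prod_{i=1}^{p}(\nu+i)$: in the far regime the triangle inequality together with $\max(\rho_1,\rho_2)\leq 3\|z_1-z_2\|$ yields at best $2\cdot 3^{\nu}\prod_{i=1}^{p}(\nu+i)$, and in the near regime the gradient bound involves $\|Q_p'\|_\infty$, which by Markov's inequality costs an additional factor of order $p^2$. You acknowledge this ``bookkeeping'' problem yourself, but it is not mere bookkeeping --- the argument as designed proves $\nu$-H\"older continuity only with a constant $c_p\prod_{i=1}^{p}(\nu+i)$ for some $c_p$ growing with $p$. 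That weaker conclusion would still suffice for the paper's asymptotic complexity bounds, but it is not the lemma as stated, and the closing heuristic about the factor $2$ absorbing both regimes does not survive the actual estimates.
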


\begin{proof}
See \cite{NES9}.
\end{proof}

As a consequence of the lemma above, we have the following property.
\begin{lemma}
\label{lem:june1}
If H1 holds, then the $p$th derivative of $F_{\delta}(\,.\,)$ in (\ref{eq:july1}) is $\nu$-H\"{o}lder continuous with constant $H_{F_{\delta},p}(\nu)=H_{f,p}(\nu)+\frac{\delta}{p+\nu}C_{p,\nu}$.

\end{lemma}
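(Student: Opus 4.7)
The plan is to prove Lemma \ref{lem:june1} by a direct computation using linearity of the $p$th directional derivative together with the triangle inequality for the operator norm on symmetric $p$-linear forms.

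First I would note that since $F_\delta(x) = f(x) + \frac{\delta}{p+\nu} d_{p+\nu}(x)$ is a finite linear combination of $f$ and $d_{p+\nu}$, the same is true of every directional derivative, i.e. for any fixed directions $h_1,\ldots,h_p\in\E$,
\begin{equation*}
D^{p}F_{\delta}(x)[h_{1},\ldots,h_{p}] = D^{p}f(x)[h_{1},\ldots,h_{p}] + \frac{\delta}{p+\nu}\,D^{p}d_{p+\nu}(x)[h_{1},\ldots,h_{p}].
\end{equation*}
Subtracting the analogous expression at $y$ and taking the supremum over unit-norm directions then gives, by the triangle inequality for $\|\cdot\|$,
\begin{equation*}
\|D^{p}F_{\delta}(x)-D^{p}F_{\delta}(y)\| \leq \|D^{p}f(x)-D^{p}f(y)\| + \frac{\delta}{p+\nu}\,\|D^{p}d_{p+\nu}(x)-D^{p}d_{p+\nu}(y)\|.
\end{equation*}

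Next I would bound each of the two terms on the right. For the first term I would invoke assumption H1, which by definition of $H_{f,p}(\nu)$ in (\ref{eq:2.3}) yields $\|D^{p}f(x)-D^{p}f(y)\|\leq H_{f,p}(\nu)\|x-y\|^{\nu}$. For the second term I would apply the preceding lemma (citing \cite{NES9}), which gives $\|D^{p}d_{p+\nu}(x)-D^{p}d_{p+\nu}(y)\|\leq C_{p,\nu}\|x-y\|^{\nu}$. Combining these two bounds yields
\begin{equation*}
\|D^{p}F_{\delta}(x)-D^{p}F_{\delta}(y)\| \leq \left(H_{f,p}(\nu) + \frac{\delta}{p+\nu}C_{p,\nu}\right)\|x-y\|^{\nu},
\end{equation*}
which is exactly the claimed Hölder estimate with the stated constant. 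Taking the supremum over $x,y$ (divided by $\|x-y\|^\nu$) then shows $H_{F_\delta,p}(\nu)$ is bounded above by $H_{f,p}(\nu)+\frac{\delta}{p+\nu}C_{p,\nu}$, completing the proof.

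There is no real obstacle here: the result is essentially the observation that $\nu$-Hölder continuity of the $p$th derivative is preserved under finite sums, with Hölder constants adding. The only point requiring a tiny bit of care is ensuring that the $D^p$ operator commutes with the scaling $\frac{\delta}{p+\nu}$ and with addition, which follows from multilinearity; and that the supremum-over-unit-directions definition of $\|\cdot\|$ on symmetric $p$-linear forms is a norm (so that the triangle inequality applies). Both are standard.
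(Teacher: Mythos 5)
Your proof is correct and follows exactly the route the paper intends: the paper gives no explicit proof, stating the lemma as an immediate consequence of the preceding lemma on $d_{p+\nu}$ (cited from \cite{NES9}), combined with H1 and the additivity of H\"{o}lder constants under sums. Your only (appropriate) refinement is the observation that the argument yields an upper bound on $H_{F_{\delta},p}(\nu)$ rather than equality, which is all that is used downstream.
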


In view of Lemma \ref{lem:june1}, to solve (\ref{eq:5.17}) we can use the following instance of Algorithm A (see Appendix A):
\\[0.2cm]
\begin{mdframed}
\noindent\textbf{Algorithm 5. Accelerated Tensor Method for Problem (\ref{eq:5.17})}
\\[0.2cm]
\noindent\textbf{Step 0.} Choose $x_{0}\in\E$, $\theta\geq 0$ and $\epsilon\in (0,1)$. Define function \\$\psi_{0}(x)=\frac{1}{p+\nu}\|x-x_{0}\|^{p+\nu}$. Set 
\begin{equation}
H_{\delta}=p\left(H_{F_{\delta},p}+3\theta (p-1)!\right),
\label{eq:stela1}
\end{equation}
$v_{0}=x_{0}$, $A_{0}=0$ and $t:=0$.\\
\noindent\textbf{Step 1.} Compute the coefficient $a_{t}>0$ by solving equation 
\begin{equation*}
a_{t}^{p+\alpha}=\dfrac{1}{2^{(3p-1)}}\left[\dfrac{(p-1)!}{H_{\delta}}\right](A_{t}+a_{t})^{p+\alpha-1}.
\end{equation*}
\noindent\textbf{Step 2.} Compute $y_{t}=(1-\gamma_{t})x_{t}+\gamma_{t}v_{t}$, with $\gamma_{t}=a_{t}/[A_{t}+a_{t}]$.\\
\noindent\textbf{Step 3.} Compute an approximate solution $x_{t+1}$ to $\min_{x\in\E}\tilde{\Omega}_{y_{t},p,M}^{(\nu)}(x)$ such that
\begin{equation*}
\tilde{\Omega}_{y_{t},p,M}^{(\nu)}(x_{t+1})\leq\tilde{F}_{\delta}(y_{t})\quad\text{and}\quad \|\nabla\Omega_{y_{t},p,M}^{(\nu)}(x_{t+1})+g_{\varphi}(x_{t+1})\|\leq\theta\|x_{t+1}-y_{t}\|^{p+\nu-1}.
\end{equation*}
for some $g_{\varphi}(x_{t+1})\in\partial\varphi(x_{t+1})$.\\
\noindent\textbf{Step 4.} Define $\psi_{t+1}(x)=\psi_{t}(x)+a_{t}\left[F_{\delta}(x_{t+1})+\langle\nabla F_{\delta}(x_{t+1}),x-x_{t+1}\rangle + \varphi(x)\right]$ and compute $v_{t+1}=\arg\min_{x\in\E}\,\psi_{t+1}(x)$.\\
\noindent\textbf{Step 5.} Set $t:=t+1$ and go back to Step 1.
\end{mdframed}

Let us consider the following restart procedure based on Algorithm 5.
\\[0.2cm]
\begin{mdframed}
\noindent\textbf{Algorithm 6. Accelerated Regularized Tensor Method}
\\[0.2cm]
\noindent\textbf{Step 0.} Choose $x_{0}\in\E$, $\epsilon\in (0,1)$, $\theta\geq 0$ and $\delta>0$. Define  
\begin{equation}
m=1+\left\lceil\left(\dfrac{2^{4p+\nu-2}(p+\nu)^{p+\nu}H_{\delta}}{\delta (p-1)!}\right)^{\frac{1}{p+\nu}}\right\rceil,
\label{eq:5.18}
\end{equation}
for $H_{\delta}$ defined in (\ref{eq:stela1}). Set $y_{0}=x_{0}$, $u_{0}=x_{0}$ and $k:=0$.\\
\noindent\textbf{Step 1.} If $k>0$ and $\|\nabla \tilde{F}_{\delta}(u_{k})\|_{*}\leq \epsilon/2$, STOP.\\
\noindent\textbf{Step 2.} By applying Algorithm 5 to problem (\ref{eq:5.17}), with $x_{0}^{(k)}=y_{k}$, compute the first $m$ iterates $\left\{x_{t}^{(k)}\right\}_{t=0}^{m}$.\\
\noindent\textbf{Step 3.} Set $y_{k+1}=x_{m}^{(k)}$ and compute $u_{k+1}\in\mathbb{R}^{n}$ such that\small
\begin{equation}
\tilde{\Omega}_{y_{t},p,M}^{(\nu)}(u_{k+1})\leq\tilde{F}_{\delta}(y_{k+1})\quad\text{and}\quad \|\nabla\Omega_{y_{k+1},p,M}^{(\nu)}(u_{k+1})+g_{\varphi}(u_{k+1})\|\leq\theta\|u_{k+1}-y_{k+1}\|^{p+\nu-1}.
\label{eq:5.20}
\end{equation}
\normalsize
for some $g_{\varphi}(u_{k+1})\in\partial\varphi(u_{k+1})$.\\
\noindent\textbf{Step 4.} Set $k:=k+1$ and go back to Step 1.
\end{mdframed}

\begin{theorem}
\label{thm:7.1}
Suppose that H1 holds and let $\left\{u_{k}\right\}_{k=0}^{T}$ be a sequence generated by Algorithm 6 such that
\begin{equation}
\|\nabla \tilde{F}_{\delta}(u_{k})\|>\dfrac{\epsilon}{2},\quad k=0,\ldots,T.
\label{eq:5.22}
\end{equation}
Then,
\begin{equation}
T\leq 1+\log_{2}\left(\dfrac{32(p+1)!H_{\delta}^{\frac{1}{p+\nu-1}}\delta R^{p+\nu}}{2^{p+\nu-1}(p+\nu)\epsilon^{\frac{p+\nu}{p+\nu-1}}}\right).
\label{eq:5.23}
\end{equation}
\end{theorem}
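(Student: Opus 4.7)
The plan is a geometric-contraction argument: Algorithm~6 repeatedly restarts Algorithm~5 on the regularized problem $\min\tilde F_\delta$, and the role of the regularization is to render $\tilde F_\delta$ uniformly convex of degree $p+\nu$, so that a fixed number $m$ of inner iterations suffices to halve (to the $p+\nu$-th power) the distance to $x_\delta^{*} := \arg\min\tilde F_\delta$ at each outer restart. After $T$ restarts the functional residual at $y_T$ is therefore exponentially small, and the single tensor step in Step~3 of Algorithm~6 converts this into a bound on $\|\nabla\tilde F_\delta(u_T)\|_{*}$ via Lemma~\ref{lem:ale5.2}.

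The two ingredients of the contraction are, first, the functional-residual bound for Algorithm~5 applied to $\tilde F_\delta$ (which by Lemma~\ref{lem:june1} satisfies H1 with constant $H_{F_\delta,p}(\nu)$, absorbed into $H_\delta$ via~(\ref{eq:stela1}))
$$\tilde F_\delta(y_{k+1}) - \tilde F_\delta^{*} \leq \frac{2^{3p-1}H_\delta(p+\nu)^{p+\nu-1}}{(p-1)!(m-1)^{p+\nu}}R_k^{p+\nu},\qquad R_k := \|y_k - x_\delta^{*}\|,$$
and, second, the uniform convexity of $\tilde F_\delta$ of degree $p+\nu$ with parameter $2^{2-(p+\nu)}\delta$, inherited from the standard uniform convexity of the power function $\frac{1}{p+\nu}\|\cdot - x_0\|^{p+\nu}$ in the Euclidean norm:
$$\tilde F_\delta(y) - \tilde F_\delta^{*} \geq \frac{2^{2-(p+\nu)}\delta}{p+\nu}\|y - x_\delta^{*}\|^{p+\nu}.$$
The definition~(\ref{eq:5.18}) of $m$ is precisely the threshold at which these two inequalities combine to force $R_{k+1}^{p+\nu} \leq R_k^{p+\nu}/2$. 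Since $\tilde F_\delta(x_\delta^{*}) \leq \tilde F_\delta(x^{*})$ immediately gives $\|x_0 - x_\delta^{*}\| \leq R$, iterating the contraction yields $R_T^{p+\nu} \leq R^{p+\nu}/2^T$; substituting this back into the inner-loop bound at step $T-1$ gives
$$\tilde F_\delta(y_T) - \tilde F_\delta^{*} \leq \frac{\delta R^{p+\nu}}{2^{p+\nu-1}(p+\nu)\,2^{T-1}}.$$

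Because $H_\delta \geq \max\{pH_{F_\delta,p}(\nu),\, 3\theta(p-1)!\}$ by~(\ref{eq:stela1}), Step~3 of Algorithm~6 satisfies the hypotheses of Lemma~\ref{lem:ale5.2} applied to $F_\delta$, which produces
$$\tilde F_\delta(y_T) - \tilde F_\delta^{*} \geq \tilde F_\delta(y_T) - \tilde F_\delta(u_T) \geq \frac{\|\nabla\tilde F_\delta(u_T)\|_{*}^{(p+\nu)/(p+\nu-1)}}{8(p+1)!\,H_\delta^{1/(p+\nu-1)}}.$$
Combining with the previous display, using the hypothesis $\|\nabla\tilde F_\delta(u_T)\|_{*} > \epsilon/2$, isolating $2^{T-1}$, and bounding $2^{(p+\nu)/(p+\nu-1)} \leq 4$ (valid since $p+\nu \geq 2$) produces~(\ref{eq:5.23}) after applying $\log_2$.

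The main obstacle is the constant-bookkeeping at the heart of the contraction step: the definition of $m$ in~(\ref{eq:5.18}) is ad-hoc-looking, and one has to verify that its numerical prefactor equals \emph{exactly} the ratio of the inner-loop constant $2^{3p-1}(p+\nu)^{p+\nu-1}/(p-1)!$ to half the uniform-convexity constant $2^{2-(p+\nu)}\delta/(p+\nu)$, and that the power of $2$ surviving the combination with Lemma~\ref{lem:ale5.2} matches the $2^{p+\nu-1}$ appearing in~(\ref{eq:5.23}). The comparison giving $\|x_0 - x_\delta^{*}\| \leq R$ and the final algebraic step are routine.
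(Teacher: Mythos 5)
Your proposal is correct and follows essentially the same route as the paper's own proof: the contraction $\|y_{k+1}-x_\delta^{*}\|^{p+\nu}\leq\tfrac{1}{2}\|y_k-x_\delta^{*}\|^{p+\nu}$ obtained by combining the inner-loop bound of Theorem \ref{thm:B2} with the uniform convexity of $\tilde F_\delta$ of degree $p+\nu$ and parameter $\delta 2^{-(p+\nu-2)}$, followed by Lemma \ref{lem:ale5.2} at Step 3 and the comparison $\|x_0-x_\delta^{*}\|\leq R$. The constant-matching you flag as the main obstacle does check out exactly as you describe (the choice of $m$ in (\ref{eq:5.18}) yields precisely the factor $\delta 2^{-(p+\nu-2)}/[2(p+\nu)]$, and $8\cdot 2^{(p+\nu)/(p+\nu-1)}\leq 32$ gives the stated prefactor).
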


\begin{proof}
Let $x_{\delta}^{*}=\arg\min_{x\in\E}\,\tilde{F}_{\delta}(x)$. By Theorem \ref{thm:B2} and (\ref{eq:5.18}), we have
\begin{eqnarray}
\tilde{F}_{\delta}(y_{k+1})-\tilde{F}_{\delta}(x_{\delta}^{*})& = & \tilde{F}_{\delta}(x_{m}^{(k)})-\tilde{F}_{\delta}(x_{\delta}^{*})\nonumber\\
 &\leq &\dfrac{2^{3p-1}H_{\delta}(p+\nu)^{p+\nu-1}\|x_{0}^{(k)}-x_{\delta}^{*}\|^{p+\nu}}{(p-1)!(m-1)^{p+\nu}}\nonumber\\
 &\leq &\dfrac{\delta 2^{-(p+\nu-2)}}{2(p+\nu)}\|y_{k}-x_{\delta}^{*}\|^{p+\nu}.
 \label{eq:5.24}
\end{eqnarray}
On the other hand, by Lemma 5 in \cite{DOI} and Lemma 1 in \cite{NES3}, function $F_{\delta}(\,.\,)$ is uniformly convex of degree $p+\nu$ with parameter $2^{-(p+\nu-2)}$. Thus,
\begin{equation}
\tilde{F}_{\delta}(y_{k+1})-\tilde{F}_{\delta}(x_{\delta}^{*})\geq\dfrac{\delta 2^{-(p+\nu-2)}}{p+\nu}\|y_{k+1}-x_{\delta}^{*}\|^{p+\nu}.
\label{eq:5.25}
\end{equation}
Combining (\ref{eq:5.24}) and (\ref{eq:5.25}), we obtain $\|y_{k+1}-x_{\delta}^{*}\|^{p+\nu}\leq\dfrac{1}{2}\|y_{k}-x_{\delta}^{*}\|^{p+\nu}$, and so
\begin{equation}
\|y_{k}-x_{\delta}^{*}\|^{p+\nu}\leq\left(\dfrac{1}{2}\right)^{k}\|y_{0}-x_{\delta}^{*}\|^{p+\nu}=\left(\dfrac{1}{2}\right)^{k}\|x_{0}-x_{\delta}^{*}\|^{p+\nu}.
\label{eq:5.26}
\end{equation}
Thus, it follows from (\ref{eq:5.24}) and (\ref{eq:5.26}) that
\begin{equation}
\tilde{F}_{\delta}(y_{k+1})-\tilde{F}_{\delta}(x_{\delta}^{*})\leq \dfrac{\delta}{2^{p+\nu-1}(p+\nu)}\left(\dfrac{1}{2}\right)^{k}\|x_{0}-x_{\delta}^{*}\|^{p+\nu}.
\label{eq:5.27}
\end{equation}
In view of Lemma \ref{lem:ale5.2}, by (\ref{eq:5.20}) and (\ref{eq:stela1}), we get
\begin{equation}
\tilde{F}_{\delta}(y_{k+1})-\tilde{F}_{\delta}(u_{k+1})\geq\dfrac{1}{8(p+1)!H_{\delta}^{\frac{1}{p+\nu-1}}}\|\nabla \tilde{F}_{\delta}(u_{k+1})\|_{*}^{\frac{p+\nu}{p+\nu-1}}.
\label{eq:5.28}
\end{equation}
Then, combining (\ref{eq:5.27}) and (\ref{eq:5.28}), it follows that
\begin{equation*}
\dfrac{1}{8(p+1)!H_{\delta}^{\frac{1}{p+\nu-1}}}\|\nabla \tilde{F}_{\delta}(u_{k+1})\|_{*}^{\frac{p+\nu}{p+\nu-1}}\leq\dfrac{\delta}{2^{p+\nu-1}(p+\nu)}\left(\dfrac{1}{2}\right)^{k}\|x_{0}-x_{\delta}^{*}\|^{p+\nu}.
\end{equation*}
In particular, for $k=T-1$, it follows from (\ref{eq:5.22}) that
\begin{equation}
2^{T-1}\leq\dfrac{32(p+1)!H_{\delta}^{\frac{1}{p+\nu-1}}\delta\|x_{0}-x_{\delta}^{*}\|^{p+\nu}}{2^{p+\nu-1}(p+\nu)\epsilon^{\frac{p+\nu}{p+\nu-1}}}.
\label{eq:stela2}
\end{equation}
Since $\tilde{F}_{\delta}(x_{\delta}^{*})\leq \tilde{F}_{\delta}(x^{*})$, it follows that $\|x_{0}-x_{\delta}^{*}\|\leq \|x_{0}-x^{*}\|\leq R$. Thus, combining this with (\ref{eq:stela2}), we get (\ref{eq:5.23}).
\end{proof}

\begin{corollary}
\label{cor:7.1}
Suppose that H1 holds and that $R\geq 1$. Then, Algorithm 6 with 
\begin{equation}
\delta=\dfrac{\epsilon}{2^{(p+\nu)}R^{p+\nu-1}}.
\label{eq:5.29}
\end{equation}
perform at most 
\begin{equation}
\mathcal{O}\left(\log_{2}\left(\dfrac{R^{p+\nu-1}}{\epsilon}\right)\left(\dfrac{R^{p+\nu-1}}{\epsilon}\right)^{\frac{1}{p+\nu}}\right).
\label{eq:5.30}
\end{equation}
iterations of Algorithm 5 in order to generate $u_{T}$ such that $\|\nabla \tilde{f}(u_{T})\|_{*}\leq\epsilon$.

\end{corollary}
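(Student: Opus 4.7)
The plan is to combine Theorem~\ref{thm:7.1}, which bounds the number $T$ of outer restarts, with the fixed inner length $m$ prescribed by (\ref{eq:5.18}), and then to convert the regularized stationarity condition $\|\nabla\tilde F_{\delta}(u_{T})\|_{*}\le \epsilon/2$ back into the original one $\|\nabla\tilde f(u_{T})\|_{*}\le \epsilon$.

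First I would substitute the chosen $\delta=\epsilon/(2^{p+\nu}R^{p+\nu-1})$ into the logarithmic bound (\ref{eq:5.23}). A direct simplification reduces the argument of the $\log_{2}$ to a constant multiple of $R^{p+\nu-1}/\epsilon$ (the factor $\delta R^{p+\nu}$ collapses to $\epsilon R/2^{p+\nu}$ and combines with $\epsilon^{-(p+\nu)/(p+\nu-1)}$ to leave $R\epsilon^{-1/(p+\nu-1)}$, which is of order $(R^{p+\nu-1}/\epsilon)^{1/(p+\nu-1)}$). Using Lemma~\ref{lem:june1} together with the hypotheses $R\ge 1$ and $\epsilon\in (0,1)$, which force $\delta\le 1$, the regularized H\"older constant $H_{F_{\delta},p}(\nu)$ and hence the quantity $H_{\delta}$ defined in (\ref{eq:stela1}) stay bounded by a constant that depends only on $p$, $\nu$, $\theta$ and $H_{f,p}(\nu)$. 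Thus $T=\mathcal{O}(\log_{2}(R^{p+\nu-1}/\epsilon))$.

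Next I would bound the inner loop length $m$ from (\ref{eq:5.18}). With $H_{\delta}=\mathcal{O}(1)$ as above, (\ref{eq:5.18}) gives $m=\mathcal{O}((1/\delta)^{1/(p+\nu)})$, and inserting the chosen $\delta$ yields $m=\mathcal{O}((R^{p+\nu-1}/\epsilon)^{1/(p+\nu)})$. The total number of calls of Algorithm~5 is therefore $Tm=\mathcal{O}\bigl(\log_{2}(R^{p+\nu-1}/\epsilon)\,(R^{p+\nu-1}/\epsilon)^{1/(p+\nu)}\bigr)$, matching (\ref{eq:5.30}).

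The last step, which I expect to be the main technical obstacle, is the passage from $\|\nabla\tilde F_{\delta}(u_{T})\|_{*}\le \epsilon/2$ to $\|\nabla\tilde f(u_{T})\|_{*}\le \epsilon$. From the definition (\ref{eq:july1}) we have the pointwise identity $\nabla\tilde f(x)=\nabla\tilde F_{\delta}(x)-\delta\|x-x_{0}\|^{p+\nu-2}B(x-x_{0})$, so $\|\nabla\tilde f(u_{T})\|_{*}\le \|\nabla\tilde F_{\delta}(u_{T})\|_{*}+\delta\|u_{T}-x_{0}\|^{p+\nu-1}$; it therefore suffices to show $\|u_{T}-x_{0}\|\le 2R$, since with the chosen $\delta$ this automatically forces $\delta\|u_{T}-x_{0}\|^{p+\nu-1}\le\delta(2R)^{p+\nu-1}=\epsilon/2$. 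To bound $\|u_{T}-x_{0}\|$ I would first note that comparing $\tilde F_{\delta}(x_{\delta}^{*})\le \tilde F_{\delta}(x^{*})$ with $\tilde f(x^{*})\le\tilde f(x_{\delta}^{*})\le\tilde F_{\delta}(x_{\delta}^{*})$ gives $\|x_{\delta}^{*}-x_{0}\|\le\|x^{*}-x_{0}\|\le R$. Then, invoking the uniform convexity of $\tilde F_{\delta}$ of degree $p+\nu$ (already used in the proof of Theorem~\ref{thm:7.1}), the descent property (\ref{eq:5.20}) yielding $\tilde F_{\delta}(u_{T})\le \tilde F_{\delta}(y_{T})$, and the geometric decay (\ref{eq:5.27}), one obtains $\|u_{T}-x_{\delta}^{*}\|\le R$. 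A final triangle inequality delivers $\|u_{T}-x_{0}\|\le 2R$, and the corollary follows.
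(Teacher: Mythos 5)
Your proposal is correct and follows essentially the same route as the paper's proof: substitute the chosen $\delta$ into the bound of Theorem \ref{thm:7.1} while controlling $H_{\delta}$ via Lemma \ref{lem:june1}, bound $m$ from (\ref{eq:5.18}), multiply the two, and convert $\|\nabla\tilde F_{\delta}(u_{T})\|_{*}\le\epsilon/2$ into $\|\nabla\tilde f(u_{T})\|_{*}\le\epsilon$ by using the uniform convexity of $\tilde F_{\delta}$ together with $\|x_{0}-x_{\delta}^{*}\|\le R$ to show $\|u_{T}-x_{0}\|\le 2R$, so that $\delta\|u_{T}-x_{0}\|^{p+\nu-1}\le\epsilon/2$. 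All the key estimates you identify (including the triangle-inequality step and the collapse of $\delta(2R)^{p+\nu-1}$ to $\epsilon/2$) coincide with those in the paper.
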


\begin{proof}
By Theorem \ref{thm:7.1}, we can obtain $\|\nabla \tilde{F}_{\delta}(u_{T})\|_{*}\leq \epsilon/2$ with
\begin{equation}
T\leq 2+\log_{2}\left(\dfrac{32(p+1)!H_{\delta}^{\frac{1}{p+\nu-1}}\delta R^{p+\nu}}{2^{p+\nu-1}(p+\nu)\epsilon^{\frac{p+\nu}{p+\nu-1}}}\right).
\label{eq:5.31}
\end{equation}
Moreover, it follows from (\ref{eq:stela1}), (\ref{eq:5.29}), the definition of $H_{F_{\delta},p}(\nu)$ in Lemma \ref{lem:june1}, $\epsilon\in (0,1)$ and $R\geq 1$ that
\begin{eqnarray}
H_{\delta}&=& p\left(H_{F_{\delta},p}+3\theta (p-1)!\right)= p\left(H_{f,p}(\nu)+\dfrac{\delta}{p+\nu}C_{p,\nu}+3\theta (p-1)!\right)\nonumber\\
         &=& p\left(H_{f,p}(\nu)+\dfrac{\epsilon}{2^{(p+\nu)}R^{p+\nu-1}}\dfrac{C_{p,\nu}}{(p+\nu)}+3\theta (p-1)!\right)\nonumber\\
         &\leq & p\left(H_{f,p}(\nu)+C_{p,\nu}+3\theta (p-1)!\right).
         \label{eq:june5.32}
\end{eqnarray}
Combining (\ref{eq:5.31}), (\ref{eq:june5.32}) and (\ref{eq:5.29}), we have
\begin{equation}
T\leq 2+\log_{2}\left(\dfrac{32(p+1)!\left[p\left(H_{f,p}(\nu)+C_{p,\nu}+3\theta (p-1)!\right)\right]^{\frac{1}{p+\nu-1}}R}{2^{2(p+\nu-2)}(p+\nu)\epsilon^{\frac{1}{p+\nu-1}}}\right).
\label{eq:june5.33}
\end{equation}
At this point $u_{T}$, we have
\begin{equation}
\|\nabla \tilde{f}(u_{T})\|_{*} \leq \|\nabla \tilde{F}_{\delta}(u_{T})\|_{*}+\dfrac{\delta}{p+\nu}\|\nabla d_{p+\nu}(u_{T})\|\leq\dfrac{\epsilon}{2}+\delta \|u_{T}-x_{0}\|^{p+\nu-1}.
\label{eq:5.32}
\end{equation}
Since $\tilde{F}_{\delta}(\,.\,)$ is uniformly convex of degree $p+\nu$ with parameter $2^{-(p+\nu-2)}$, it follows from (\ref{eq:5.28}) and (\ref{eq:5.27}) that
\begin{eqnarray*}
\dfrac{\delta 2^{-(p+\nu-2)}}{p+\nu}\|u_{T}-x_{\delta}^{*}\|^{p+\nu}&\leq & \tilde{F}_{\delta}(u_{T})-\tilde{F}_{\delta}(x_{\delta}^{*})\leq  \tilde{F}_{\delta}(y_{T})-\tilde{F}_{\delta}(x_{\delta}^{*})\\
&\leq &\dfrac{\delta}{2^{p+\nu-1}(p+\nu)}\left(\dfrac{1}{2}\right)^{T-1}\|x_{0}-x_{\delta}^{*}\|^{p+\nu}.
\end{eqnarray*}
Therefore, $\|u_{T}-x_{\delta}^{*}\|\leq \|x_{0}-x_{\delta}^{*}\|$, and so
\begin{eqnarray}
\|u_{T}-x_{0}\|^{p+\nu-1}&\leq &\left[\|u_{T}-x_{\delta}^{*}\|+\|x_{\delta}^{*}-x_{0}\|\right]^{p+\nu-1}\nonumber\\
                         &\leq & 2^{p+\nu-1}\|x_{0}-x_{\delta}^{*}\|^{p+\nu-1}\label{eq:mais1}\\
                         & \leq & 2^{p+\nu-1}R^{p+\nu-1}.
\label{eq:5.33}
\end{eqnarray}
Now, combining (\ref{eq:5.32}), (\ref{eq:5.33}) and (\ref{eq:5.29}), we obtain
\begin{equation}
\|\nabla \tilde{f}(u_{T})\|\leq \dfrac{\epsilon}{2}+\dfrac{\epsilon}{2}=\epsilon.
\end{equation}
The conclusion is obtained by noticing that, for $\delta$ given in (\ref{eq:5.29}) we have 
\begin{eqnarray}
m&=&1+\left\lceil\left(\dfrac{2^{4p+\nu-2}(p+\nu)^{p+\nu}H_{\delta}}{\delta (p-1)!}\right)^{\frac{1}{p+\nu}}\right\rceil\nonumber\\
&\leq & 1+\left\lceil\left(\dfrac{2^{4p+\nu-2}(p+\nu)^{p+\nu}\left[p\left(H_{f,p}(\nu)+C_{p,\nu}+3\theta (p-1)!\right)\right]}{\delta (p-1)!}\right)^{\frac{1}{p+\nu}}\right\rceil\nonumber\\
& = & 1+\left\lceil\left(\dfrac{2^{5p+2\nu-2}(p+\nu)^{p+\nu}R^{p+\nu-1}\left[p\left(H_{f,p}(\nu)+C_{p,\nu}+3\theta (p-1)!\right)\right]}{\epsilon (p-1)!}\right)^{\frac{1}{p+\nu}}\right\rceil
\label{eq:june5.34}
\end{eqnarray}
Thus, (\ref{eq:5.30}) follows from multiplying (\ref{eq:june5.33}) and (\ref{eq:june5.34}).
\end{proof}

Suppose now that $S\geq \tilde{f}(x_{0})-\tilde{f}(x^{*})$ is known. In this case, we have the following variant of Theorem 5.7.
\begin{theorem}
Suppose that H1 holds and let $\left\{u_{k}\right\}_{k=0}^{T}$ be a sequence generated by Algorithm 6 such that
\begin{equation}
\|\nabla \tilde{F}_{\delta}(u_{k})\|_{*}>\dfrac{\epsilon}{2},\,\,k=0,\ldots,T.
\label{eq:mais5.35}
\end{equation}
Then,
\begin{equation}
T\leq 1+\log_{2}\left(\dfrac{16(p+1)!H_{\delta}^{\frac{1}{p+\nu-1}}S}{\epsilon^{\frac{p+\nu}{p+\nu-1}}}\right).
\label{eq:mais5.36}
\end{equation}
\end{theorem}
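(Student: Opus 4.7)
The plan is to follow the proof of Theorem \ref{thm:7.1} verbatim up through equation (\ref{eq:stela2}), and then replace the geometric bound $\|x_0 - x_\delta^{*}\| \leq R$ with a functional one derived from uniform convexity of $\tilde{F}_{\delta}$. Everything needed (the halving of $\|y_k - x_\delta^{*}\|^{p+\nu}$ coming from the combination of Theorem \ref{thm:B2} and uniform convexity, together with the gradient-decrease inequality of Lemma \ref{lem:ale5.2}) was already done in that earlier proof, so the present theorem is obtained by changing only the final step.

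Concretely, I would first reproduce the chain leading to
\begin{equation*}
2^{T-1} \leq \dfrac{32(p+1)!\,H_{\delta}^{\frac{1}{p+\nu-1}}\,\delta\,\|x_{0}-x_{\delta}^{*}\|^{p+\nu}}{2^{p+\nu-1}(p+\nu)\,\epsilon^{\frac{p+\nu}{p+\nu-1}}},
\end{equation*}
using (\ref{eq:mais5.35}) in place of (\ref{eq:5.22}). All of this is essentially a copy of the argument for Theorem \ref{thm:7.1}, relying on Theorem \ref{thm:B2}, the uniform convexity of $F_{\delta}$ with parameter $\delta\,2^{-(p+\nu-2)}$, and Lemma \ref{lem:ale5.2} applied under the choice $M = H_{\delta}$ in (\ref{eq:stela1}).

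Next, instead of appealing to $R \geq \|x_0 - x^{*}\|$, I would observe that $F_{\delta}$ is uniformly convex of degree $p+\nu$ with parameter $\delta\,2^{-(p+\nu-2)}$, and hence so is $\tilde F_\delta$. Applied at the minimizer $x_{\delta}^{*}$ of $\tilde F_\delta$, this yields
\begin{equation*}
\|x_{0}-x_{\delta}^{*}\|^{p+\nu} \;\leq\; \dfrac{(p+\nu)\,2^{p+\nu-2}}{\delta}\bigl(\tilde{F}_{\delta}(x_{0}) - \tilde{F}_{\delta}(x_{\delta}^{*})\bigr).
\end{equation*}
Since $\tilde{F}_{\delta}(x_{0}) = \tilde{f}(x_{0})$ (the regularizer vanishes at $x_0$) and $\tilde{F}_{\delta}(x_{\delta}^{*}) \geq \tilde{f}(x_{\delta}^{*}) \geq \tilde{f}(x^{*})$ (because the regularizer is nonnegative and $x^{*}$ is optimal for $\tilde{f}$), we obtain
\begin{equation*}
\tilde{F}_{\delta}(x_{0}) - \tilde{F}_{\delta}(x_{\delta}^{*}) \;\leq\; \tilde{f}(x_{0}) - \tilde{f}(x^{*}) \;\leq\; S.
\end{equation*}
Plugging the resulting bound $\delta\,\|x_{0}-x_{\delta}^{*}\|^{p+\nu} \leq (p+\nu)\,2^{p+\nu-2}\,S$ into the displayed inequality for $2^{T-1}$ cancels the factors $(p+\nu)$ and $2^{p+\nu-1}/2^{p+\nu-2} = 2$, producing
\begin{equation*}
2^{T-1} \;\leq\; \dfrac{16(p+1)!\,H_{\delta}^{\frac{1}{p+\nu-1}}\,S}{\epsilon^{\frac{p+\nu}{p+\nu-1}}},
\end{equation*}
which on taking $\log_{2}$ gives (\ref{eq:mais5.36}).

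There is no real obstacle here: the only subtle point is the inequality $\tilde{F}_{\delta}(x_{\delta}^{*}) \geq \tilde{f}(x^{*})$, which requires noting both that $\tilde{F}_{\delta} \geq \tilde{f}$ pointwise and that $x^{*}$ is a minimizer of $\tilde{f}$. The constant-tracking (making $32/(2^{p+\nu-1}(p+\nu))$ combine with the $(p+\nu)\,2^{p+\nu-2}$ factor down to exactly $16$) is the only place a small arithmetic slip could occur.
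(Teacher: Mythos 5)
Your proposal is correct and follows essentially the same route as the paper: reuse the chain leading to (\ref{eq:stela2}), then bound $\delta\|x_0-x_\delta^*\|^{p+\nu}$ via uniform convexity of $\tilde F_\delta$ at $x_\delta^*$ and the observation that $\tilde F_\delta(x_0)-\tilde F_\delta(x_\delta^*)\leq \tilde f(x_0)-\tilde f(x^*)\leq S$. The constant bookkeeping ($32/(2\cdot 2^{p+\nu-2}(p+\nu))$ against $(p+\nu)2^{p+\nu-2}$ giving $16$) matches the paper's computation exactly.
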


\begin{proof}
By (\ref{eq:stela2}), we have
\begin{equation}
T\leq 1+\log_{2}\left(\dfrac{32(p+1)!H_{\delta}^{\frac{1}{p+\nu-1}}}{2\epsilon^{\frac{p+\nu}{p+\nu-1}}}\dfrac{\delta}{2^{p+\nu-2}(p+\nu)}\|x_{0}-x_{\delta}^{*}\|^{p+\nu}\right).
\label{eq:mais5.37}
\end{equation}
Since $\tilde{F}_{\delta}(\,.\,)$ is uniformly convex of degree $p+\nu$ with parameter $\delta 2^{-(p+\nu-2)}$ we have
\begin{eqnarray}
\dfrac{\delta}{2^{p+\nu-2}(p+\nu)}\|x_{0}-x_{\delta}^{*}\|^{p+\nu}&\leq & \tilde{F}_{\delta}(x_{0})-\tilde{F}_{\delta}(x_{\delta}^{*})\nonumber\\
&=& \tilde{f}(x_{0})-\tilde{f}(x_{\delta}^{*})-\dfrac{\delta}{p+\nu}\|x_{\delta}^{*}-x_{0}\|^{p+\nu}\nonumber\\
&\leq & \tilde{f}(x_{0})-\tilde{f}(x_{\delta}^{*})\nonumber\\
&\leq & \tilde{f}(x_{0})-\tilde{f}(x^{*})\nonumber\\
&\leq & S.
\label{eq:mais5.38}
\end{eqnarray}
Combining (\ref{eq:mais5.37}) and (\ref{eq:mais5.38}) we get (\ref{eq:mais5.36}).
\end{proof}

\begin{corollary}
Suppose that H1 holds and that $S\geq 1$. Then, Algorithm 6 with 
\begin{equation}
\delta=\left[\dfrac{\epsilon}{2^{p+\nu}\left[2^{p+\nu-2}(p+\nu)S\right]^{\frac{p+\nu-1}{p+\nu}}}\right]^{p+\nu}
\label{eq:mais5.39}
\end{equation}
performs at most
\begin{equation}
\mathcal{O}\left(\log_{2}\left(\dfrac{S}{\epsilon^{\frac{p+\nu-1}{p+\nu}}}\right)\left(\dfrac{S^{\frac{p+\nu-1}{p+\nu}}}{\epsilon}\right)\right)
\label{eq:mais5.40}
\end{equation}
iterations of Algorithm 5 in order to generate $u_{T}$ such that $\|\nabla \tilde{f}(u_{T})\|_{*}\leq\epsilon$.
\end{corollary}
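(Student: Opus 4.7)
The plan is to mirror the structure of the proof of Corollary \ref{cor:7.1}, with the bound $\|x_0-x_\delta^*\|\leq R$ replaced by a bound derived from the initial functional residual $S$. First I would invoke the preceding theorem, which guarantees that we obtain $\|\nabla\tilde{F}_\delta(u_T)\|_*\leq \epsilon/2$ after at most
\[
T\leq 1+\log_{2}\!\left(\dfrac{16(p+1)!H_{\delta}^{1/(p+\nu-1)}S}{\epsilon^{(p+\nu)/(p+\nu-1)}}\right)
\]
outer iterations. I would then argue, exactly as in (\ref{eq:5.32}), that
\[
\|\nabla\tilde{f}(u_T)\|_* \;\leq\; \|\nabla\tilde{F}_\delta(u_T)\|_* + \delta\|u_T-x_0\|^{p+\nu-1} \;\leq\; \frac{\epsilon}{2} + \delta\|u_T-x_0\|^{p+\nu-1},
\]
so the heart of the matter is to bound $\delta\|u_T-x_0\|^{p+\nu-1}$ by $\epsilon/2$ under the choice (\ref{eq:mais5.39}).

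Next, I would use the same uniform convexity argument that produced (\ref{eq:mais5.38}): since $\tilde{F}_\delta$ is uniformly convex of degree $p+\nu$ with parameter $\delta 2^{-(p+\nu-2)}$, we have
\[
\dfrac{\delta\, 2^{-(p+\nu-2)}}{p+\nu}\|x_0-x_\delta^*\|^{p+\nu} \;\leq\; \tilde{F}_\delta(x_0)-\tilde{F}_\delta(x_\delta^*) \;\leq\; \tilde{f}(x_0)-\tilde{f}(x^*) \;\leq\; S,
\]
which yields $\|x_0-x_\delta^*\|^{p+\nu}\leq 2^{p+\nu-2}(p+\nu)S/\delta$. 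Combining this with the estimate $\|u_T-x_0\|\leq 2\|x_0-x_\delta^*\|$ from (\ref{eq:mais1}) gives
\[
\delta\|u_T-x_0\|^{p+\nu-1} \;\leq\; \delta\cdot 2^{p+\nu-1}\Bigl(\tfrac{2^{p+\nu-2}(p+\nu)S}{\delta}\Bigr)^{(p+\nu-1)/(p+\nu)} \;=\; 2^{p+\nu-1}\delta^{1/(p+\nu)}\bigl[2^{p+\nu-2}(p+\nu)S\bigr]^{(p+\nu-1)/(p+\nu)}.
\]
A direct substitution shows that with $\delta$ as in (\ref{eq:mais5.39}) this equals exactly $\epsilon/2$, which is the whole purpose of that formula and gives $\|\nabla\tilde{f}(u_T)\|_*\leq\epsilon$.

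Finally, I would plug the chosen $\delta$ into the definitions of $H_\delta$ (via (\ref{eq:stela1}) and Lemma \ref{lem:june1}) and of $m$ (via (\ref{eq:5.18})). The $\delta$-dependent part of $H_\delta$ is bounded by a constant because $\delta/\epsilon^{p+\nu}$ stays under control by the assumption $S\geq 1$ and $\epsilon\in(0,1)$, so $H_\delta=\mathcal{O}(1)$. Substituting (\ref{eq:mais5.39}) into (\ref{eq:5.18}) makes $m=\mathcal{O}((S^{(p+\nu-1)/(p+\nu)}/\epsilon)^{1/(p+\nu)}\cdot\,\cdot\,)$, and more carefully yields $m=\mathcal{O}(S^{(p+\nu-1)/(p+\nu)}/\epsilon)$ after simplification (note $\delta^{-1/(p+\nu)}$ produces the factor $S^{(p+\nu-1)/(p+\nu)}/\epsilon$). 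Multiplying $m\cdot T$ gives (\ref{eq:mais5.40}).

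The main obstacle is not conceptual but bookkeeping: correctly tracking the exponents when plugging the specific formula for $\delta$ from (\ref{eq:mais5.39}) into both $H_\delta^{1/(p+\nu-1)}$ inside the logarithm of $T$ and into the $(H_\delta/\delta)^{1/(p+\nu)}$ structure of $m$, and verifying that the $S^{(p+\nu-1)/(p+\nu)}/\epsilon$ factor emerges cleanly. The rest is a routine repetition of the pattern already established in Theorem \ref{thm:7.1} and Corollary \ref{cor:7.1}, with the uniform convexity inequality invoked at $x_0$ (rather than at $x^*$) being the only genuinely new algebraic step.
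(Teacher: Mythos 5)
Your proposal is correct and follows essentially the same route as the paper's proof: invoke the preceding theorem for the bound on $T$, control $\delta\|u_T-x_0\|^{p+\nu-1}$ via the uniform-convexity bound $\|x_0-x_\delta^*\|^{p+\nu}\leq 2^{p+\nu-2}(p+\nu)S/\delta$ together with (\ref{eq:mais1}), verify that the choice (\ref{eq:mais5.39}) makes this term exactly $\epsilon/2$, and multiply the resulting $T$ bound by the bound on $m$. Your exponent bookkeeping (in particular that $\delta^{-1/(p+\nu)}$ produces the factor $S^{(p+\nu-1)/(p+\nu)}/\epsilon$ in $m$) matches the paper's computation.
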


\begin{proof}
By Theorem 5.9, we can obtain $\|\nabla \tilde{F}_{\delta}(u_{T})\|_{*}\leq\epsilon/2$ with
\begin{equation}
T\leq 2+\log_{2}\left(\dfrac{16(p+1)!H_{\delta}^{\frac{1}{p+\nu-1}}S}{\epsilon^{\frac{p+\nu}{p+\nu-1}}}\right).
\label{eq:mais5.41}
\end{equation}
In view of (\ref{eq:mais5.39}), $\epsilon\in (0,1)$ and $S\geq 1$, we also have
\begin{equation}
H_{\delta}\leq p\left(H_{f,p}(\nu)+C_{p,\nu}+3\theta (p-1)!\right).
\label{eq:mais5.42}
\end{equation}
Thus, from (\ref{eq:mais5.41}) and (\ref{eq:mais5.42}) it follows that
\begin{equation}
T\leq 2+\log_{2}\left(\dfrac{16(p+1)!\left[p\left(H_{f,p}(\nu)+C_{p,\nu}+3\theta (p-1)!\right)\right]^{\frac{1}{p+\nu-1}}S}{\epsilon^{\frac{p+\nu}{p+\nu-1}}}\right).
\label{eq:mais5.43}
\end{equation}
At this point $u_{T}$ we have
\begin{equation}
\|\nabla \tilde{f}(u_{T})\|_{*}\leq \|\nabla \tilde{F}_{\delta}(u_{T})\|_{*}+\dfrac{\delta}{p+\nu}\|\nabla d_{p+\nu}(u_{T})\|_{*}\nonumber\\
\leq \dfrac{\epsilon}{2}+\delta\|u_{T}-x_{0}\|^{p+\nu-1}.
\label{eq:mais5.44}
\end{equation}
By (\ref{eq:mais1}) and (\ref{eq:mais5.38}),
\begin{eqnarray}
\|u_{T}-x_{0}\|^{p+\nu-1}&\leq & 2^{p+\nu-1}\|x_{0}-x_{\delta}^{*}\|^{p+\nu-1}\leq  2^{p+\nu-1}\left[\dfrac{2^{p+\nu-2}(p+\nu)S}{\delta}\right]^{\frac{p+\nu-1}{p+\nu}}\nonumber\\
                        & = &\left(\dfrac{1}{\delta}\right)^{\frac{p+\nu-1}{p+\nu}}2^{p+\nu-1}\left[2^{p+\nu-2}(p+\nu)S\right]^{\frac{p+\nu-1}{p+\nu}}.
\label{eq:mais5.45}
\end{eqnarray}
Thus, it follows from (\ref{eq:mais5.44}), (\ref{eq:mais5.45}) and (\ref{eq:mais5.39}) that
\begin{eqnarray*}
\|\nabla \tilde{f}(u_{T})\|_{*}&\leq &\dfrac{\epsilon}{2}+\delta^{\frac{1}{p+\nu}}2^{p+\nu-1}\left[2^{p+\nu-2}(p+\nu)S\right]^{\frac{p+\nu-1}{p+\nu}}\leq \dfrac{\epsilon}{2}+\dfrac{\epsilon}{2}=\epsilon.
\end{eqnarray*}
Finally, by (\ref{eq:5.18}) and (\ref{eq:mais5.39}) we have
\begin{eqnarray*}
m&=&1+\left\lceil\left(\dfrac{2^{4p+\nu-2}(p+\nu)^{p+\nu}H_{\delta}}{\delta (p-1)!}\right)^{\frac{1}{p+\nu}}\right\rceil\nonumber\\
&\leq &1+\left\lceil\left(\dfrac{2^{4p+\nu-2}(p+\nu)^{p+\nu}\left[pH_{f,p}(\nu)+C_{p,\nu}+3\theta (p-1)!\right]}{(p-1)!}\right)^{\frac{1}{p+\nu}}\dfrac{2^{p+\nu}\left[2^{p+\nu-2}(p+\nu)S\right]^{\frac{p+\nu-1}{p+\nu}}}{\epsilon}\right\rceil.
\end{eqnarray*}
Thus, (\ref{eq:mais5.40}) follows by multiplying (\ref{eq:mais5.43}) by the upper bound on $m$ given above.
\end{proof}

\section{Lower complexity bounds under H\"{o}lder condition}

In this section we derive lower complexity bounds for $p$-order tensor methods applied to the problem (\ref{eq:2.1}) in terms of the norm of the gradient of $f$, where the objective $f$ is convex and $H_{f,p}(\nu)<+\infty$ for some $\nu\in [0,1]$. 


For simplicity, assume that $\E=\mathbb{R}^{n}$ and $B=I_{n}$. Given an approximation $\bar{x}$ for the solution of (\ref{eq:2.1}), we consider $p$-order methods that compute trial points of the form $x^{+}=\bar{x}+\bar{h}$, where the search direction $\bar{h}$ is the solution of an auxiliary problem of the form
\begin{equation}
\min_{h\in\mathbb{R}^{n}}\,\phi_{a,\gamma,q}(h)\equiv \sum_{i=1}^{p}a^{(i)}D^{i}f(\bar{x})[h]^{i}+\gamma\|h\|^{q},
\label{eq:6.1}
\end{equation}
with $a\in\mathbb{R}^{p}$, $\gamma>0$ and $q>1$. Denote by $\Gamma_{\bar{x},f}(a,\gamma,q)$ the set of all stationary points of function $\phi_{a,\gamma,q}(\,.\,)$ and define the linear subspace
\begin{equation}
S_{f}(\bar{x})=\text{Lin}\left(\Gamma_{\bar{x},f}(a,\gamma,q)\,|\,a\in\mathbb{R}^{p},\,\gamma>0,\,q>1\right).
\label{eq:6.2}
\end{equation}
More specifically, we consider the class of $p$-order tensor methods characterized by the following assumption.
\\[0.2cm]
\noindent\textbf{Assumption 1.} Given $x_{0}\in\mathbb{R}^{n}$, the method generates a sequence of test points $\left\{x_{k}\right\}_{k\geq 0}$ such that
\begin{equation}
x_{k+1}\in x_{0}+\sum_{i=0}^{k}S_{f}(x_{i}),\quad k\geq 0.
\label{eq:6.3}
\end{equation}

Given $\nu\in [0,1]$, we consider the same family of difficult problems discussed in \cite{GN3}, namely:
\begin{equation}
f_{k}(x)=\dfrac{1}{p+\nu}\left[\sum_{i=1}^{k-1}|x^{(i)}-x^{(i+1)}|^{p+\nu}+\sum_{i=k}^{n}|x^{(i)}|^{p+\nu}\right]-x^{(1)},\,\,2\leq k\leq n.
\label{eq:6.4mais}
\end{equation}
The next lemma establishes that for each $f_{k}(\,.\,)$ we have $H_{f_{k},p}(\nu)<+\infty$.

\begin{lemma}
\label{lem:6.1}
Given an integer $k\in [2,n]$, the $p$th derivative of $f_{k}(\,.\,)$ is $\nu$-H\"{o}lder continuous with
\begin{equation}
H_{f_{k},p}(\nu)=2^{\frac{2+\nu}{2}}\Pi_{i=1}^{p-1}(p+\nu-i).
\label{eq:6.5}
\end{equation}
\end{lemma}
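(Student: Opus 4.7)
The plan is to reduce the computation to the scalar building block $\eta(t)=|t|^{p+\nu}/(p+\nu)$ and then to exploit the simple chain-plus-tail structure of the linear functionals hidden in $f_k$. I would first write
\begin{equation*}
f_k(x) \;=\; \sum_{j=1}^{n}\eta(\ell_j(x)) \;-\; x^{(1)},
\qquad
\ell_j(x)=\begin{cases} x^{(j)}-x^{(j+1)}, & 1\le j\le k-1,\\ x^{(j)}, & k\le j\le n,\end{cases}
\end{equation*}
so that $\|\ell_j\|\le\sqrt{2}$ for every $j$. Because $p\ge 2$, the linear term disappears under differentiation and
$D^{p}f_k(x)[h]^{p}=\sum_{j}\eta^{(p)}(\ell_j(x))\,\ell_j(h)^{p}$.

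Next I would compute the scalar $p$th derivative by induction, getting $\eta^{(p)}(t)=\bigl[\prod_{i=1}^{p-1}(p+\nu-i)\bigr]\,|t|^{\nu}\operatorname{sign}(t)^{p}$, and I would deduce the scalar H\"older estimate
\begin{equation*}
\bigl|\eta^{(p)}(s)-\eta^{(p)}(t)\bigr|\;\le\; C\,|s-t|^{\nu},\qquad C=\prod_{i=1}^{p-1}(p+\nu-i),
\end{equation*}
via the elementary inequality $\bigl||s|^{\nu}-|t|^{\nu}\bigr|\le|s-t|^{\nu}$ (valid for $\nu\in[0,1]$), together with a short sign case analysis when $p$ is odd. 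Plugging this back and using the triangle inequality on the $p$-linear form gives, for $u=x-y$,
\begin{equation*}
\bigl|D^{p}f_k(x)[h]^{p}-D^{p}f_k(y)[h]^{p}\bigr| \;\le\; C\sum_{j=1}^{n}|\ell_j(u)|^{\nu}\,|\ell_j(h)|^{p}.
\end{equation*}

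To decouple $u$ and $h$, since $p\ge 2$, I would factor $|\ell_j(h)|^{p}=|\ell_j(h)|^{p-2}\,\ell_j(h)^{2}$ and bound the first factor together with $|\ell_j(u)|^{\nu}$ pointwise by $\|\ell_j\|^{p-2+\nu}\|h\|^{p-2}\|u\|^{\nu}\le 2^{(p-2+\nu)/2}\|h\|^{p-2}\|u\|^{\nu}$. That reduces the estimate to a purely quadratic sum $\sum_{j=1}^{n}\ell_j(h)^{2}$, and the key step of the whole proof is the chain-plus-tail quadratic inequality
\begin{equation*}
\sum_{j=1}^{k-1}(v^{(j)}-v^{(j+1)})^{2}+\sum_{j=k}^{n}(v^{(j)})^{2}\;\le\;\kappa\,\|v\|^{2},
\end{equation*}
with the sharp $\kappa$ producing the claimed constant after everything is assembled. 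Combining the pointwise factorisation, this quadratic bound, and the supremum over $\|h\|\le 1$, one reads off $\|D^{p}f_k(x)-D^{p}f_k(y)\|\le 2^{(2+\nu)/2}\,C\,\|u\|^{\nu}$.

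The main obstacle is clearly the last quadratic inequality: a naive term-by-term use of $(a-b)^{2}\le 2a^{2}+2b^{2}$ loses too much and yields a looser constant, so the right approach is a targeted re-grouping that uses the tail terms $(v^{(j)})^{2}$ for $j\ge k$ to absorb the doubling produced by the differences at the junction coordinate $v^{(k)}$. Once the sharp value of $\kappa$ is in hand, everything else in the lemma is a bookkeeping calculation.
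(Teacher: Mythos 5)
The paper does not actually prove this lemma: its ``proof'' is a one-line citation to Lemma 5.1 of \cite{GN3}, so your attempt has to be judged on its own terms. The architecture you chose --- reduce to the scalar function $\eta(t)=|t|^{p+\nu}/(p+\nu)$, compute $\eta^{(p)}(t)=\bigl[\prod_{i=1}^{p-1}(p+\nu-i)\bigr]\,|t|^{\nu}\,\mathrm{sign}(t)^{p}$, prove a scalar H\"older bound, and then control the linear forms $\ell_j$ --- is the natural one and does establish that $H_{f_k,p}(\nu)<+\infty$ with \emph{some} explicit constant. It does not, however, deliver the constant $2^{(2+\nu)/2}\prod_{i=1}^{p-1}(p+\nu-i)$ asserted in the lemma, and the step you yourself identify as the crux is exactly where it fails. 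A smaller issue first: for odd $p$ and $\nu<1$ the scalar estimate $|\eta^{(p)}(s)-\eta^{(p)}(t)|\le C|s-t|^{\nu}$ with $C=\prod_{i=1}^{p-1}(p+\nu-i)$ is false when $s$ and $t$ have opposite signs (take $s=1$, $t=-1$: the left side is $2C$, the right side is $2^{\nu}C$); the correct factor is $2^{1-\nu}C$, obtained from concavity of $x\mapsto x^{\nu}$, and it must be carried through the rest of the computation.

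The decisive problem is the quadratic inequality. After your pointwise factorisation, closing the bookkeeping at the claimed constant forces $\sum_{j=1}^{k-1}(v^{(j)}-v^{(j+1)})^{2}+\sum_{j=k}^{n}(v^{(j)})^{2}\le\kappa\|v\|^{2}$ with $\kappa=2^{(4-p)/2}$, so that $2^{(p-2+\nu)/2}\cdot\kappa=2^{(2+\nu)/2}$. But the sharp constant here is $\lambda_{\max}(A_k^{T}A_k)$: taking $v=e_k$ gives the value $2$, taking $k=2$ gives $(3+\sqrt{5})/2>2$, and the value tends to $4$ as $k$ grows. Hence the required $\kappa$ ($=2$ for $p=2$, $\le\sqrt{2}$ for $p\ge3$) is unattainable for every $p\ge2$ and every $k\ge2$; no re-grouping at the junction coordinate can rescue it. What your argument actually proves is a H\"older bound whose constant exceeds the stated one by a factor of order $2^{p/2}$ (worse still for odd $p$ with $\nu<1$). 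Since the lemma asserts a specific value of $H_{f_k,p}(\nu)$ that is then propagated into the lower-bound constants $D_{p,\nu}$ and $L_{p,\nu}$ of Theorems \ref{thm:extra6.2} and \ref{thm:6.2}, this is a genuine gap rather than a cosmetic one: the proof is morally on the right track (finiteness of the H\"older constant is all the downstream arguments truly need, up to constants), but it does not prove the lemma as stated.
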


\begin{proof}
See Lemma 5.1 in \cite{GN3}.
\end{proof}

The next lemma provides additional properties of $f_{k}(\,.\,)$.

\begin{lemma}
\label{lem:6.2}
Given an integer $k\in [2,n]$, let function $f_{k}(\,.\,)$ be defined by (\ref{eq:6.4mais}). Then, $f_{k}(\,.\,)$ has a unique global minimizer $x_{k}^{*}$. Moreover, 
\begin{equation}
f_{k}^{*}=-\dfrac{(p+\nu-1)k}{p+\nu}\quad\text{and}\quad \|x_{k}^{*}\|<\dfrac{(k+1)^{\frac{3}{2}}}{\sqrt{3}}.
\label{eq:6.13}
\end{equation}
\end{lemma}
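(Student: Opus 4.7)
The plan is to establish strict convexity of $f_k$, which makes any stationary point the unique global minimizer, and then to identify that point by solving the first-order conditions explicitly. The two quantitative claims on $f_k^{*}$ and $\|x_{k}^{*}\|$ will then follow by direct computation.

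For strict convexity I would change variables by setting $y^{(i)}=x^{(i)}-x^{(i+1)}$ for $1\le i\le k-1$ and $y^{(i)}=x^{(i)}$ for $k\le i\le n$. This is a linear bijection of $\mathbb{R}^{n}$ (triangular with unit diagonal), and under it one has $x^{(1)}=\sum_{i=1}^{k}y^{(i)}$, so
\begin{equation*}
f_{k}(x)\;=\;\frac{1}{p+\nu}\sum_{i=1}^{n}|y^{(i)}|^{p+\nu}\;-\;\sum_{i=1}^{k}y^{(i)},
\end{equation*}
which is separable, coercive, and strictly convex (since $p+\nu\ge 2$), hence admits a unique minimizer.

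Because $p+\nu\ge 2$ the objective is $C^{1}$, so this minimizer is the unique stationary point of $f_{k}$. Writing $\phi(t)=|t|^{p+\nu-1}\mathrm{sign}(t)$, the first-order conditions read
\begin{equation*}
\phi(x^{(1)}-x^{(2)})=1,\qquad \phi(x^{(j)}-x^{(j+1)})=\phi(x^{(j-1)}-x^{(j)})\quad(2\le j\le k-1),
\end{equation*}
\begin{equation*}
\phi(x^{(k)})=\phi(x^{(k-1)}-x^{(k)}),\qquad \phi(x^{(j)})=0\quad(j>k).
\end{equation*}
Since $\phi$ is a continuous strictly increasing bijection of $\mathbb{R}$, these equations force every consecutive difference $x^{(j)}-x^{(j+1)}$ with $1\le j\le k-1$ to equal $1$, force $x^{(k)}=1$, and force $x^{(j)}=0$ for $j>k$. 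Unwinding the recursion yields $x_{k}^{*(j)}=k+1-j$ for $1\le j\le k$ and $x_{k}^{*(j)}=0$ otherwise.

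Substituting back, each of the $k-1$ difference terms and the term $|x_{k}^{*(k)}|^{p+\nu}$ contributes $1$, giving a sum of $k/(p+\nu)$; subtracting $x_{k}^{*(1)}=k$ then gives $f_{k}^{*}=-k(p+\nu-1)/(p+\nu)$. For the norm, $\|x_{k}^{*}\|^{2}=\sum_{j=1}^{k}(k+1-j)^{2}=k(k+1)(2k+1)/6$, and the elementary inequality $k(2k+1)<2(k+1)^{2}$ (equivalent to $0<3k+2$) immediately gives $\|x_{k}^{*}\|^{2}<(k+1)^{3}/3$, which is exactly the required bound. I do not anticipate any serious obstacle; the only step that needs a careful word is the bijectivity of the change of variables underlying strict convexity, but this is immediate from its triangular structure.
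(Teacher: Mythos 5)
Your proof is correct and follows essentially the same route as the source the paper cites (Lemma 5.2 in \cite{GN3}, where $f_k$ is written as $\eta_{p+\nu}(A_kx)-\la e_1,x\ra$; your change of variables is precisely $y=A_kx$): reduce to a separable strictly convex coercive function, solve the stationarity conditions to get $x_k^{*(j)}=k+1-j$, and compute $f_k^{*}$ and $\|x_k^{*}\|^2=k(k+1)(2k+1)/6<(k+1)^3/3$ directly. No gaps.
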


\begin{proof}
See Lemma 5.2 in \cite{GN3}.
\end{proof}

Our goal is to understand the behavior of the tensor methods specified by Assumption 1 when applied to the minimization of $f_{k}(\,.\,)$ with a suitable $k$. For that, let us consider the following subspaces:
\begin{equation*}
\mathbb{R}^{n}_{k}=\left\{x\in\mathbb{R}^{n}\,|\,x^{(i)}=0,\,\,i=k+1,\ldots,n\right\},\,\,1\leq k\leq n-1.
\end{equation*}

\begin{lemma}
\label{lem:6.3}
For any $q\geq 0$ and $x\in\mathbb{R}_{k}^{n}$, $f_{k+q}(x)=f_{k}(x)$.
\end{lemma}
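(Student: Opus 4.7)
The plan is a direct substitution into the definition (\ref{eq:6.4mais}), relying only on the vanishing of the last $n-k$ coordinates of $x \in \mathbb{R}^n_k$. The case $q = 0$ is trivial, so assume $q \geq 1$.

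First I would split the quadratic-difference sum in $f_{k+q}(x)$ at the index $k$:
\begin{equation*}
\sum_{i=1}^{k+q-1} |x^{(i)} - x^{(i+1)}|^{p+\nu} = \sum_{i=1}^{k-1} |x^{(i)} - x^{(i+1)}|^{p+\nu} + |x^{(k)} - x^{(k+1)}|^{p+\nu} + \sum_{i=k+1}^{k+q-1} |x^{(i)} - x^{(i+1)}|^{p+\nu}.
\end{equation*}
Since $x \in \mathbb{R}^n_k$ means $x^{(k+1)} = \cdots = x^{(n)} = 0$, the isolated term collapses to $|x^{(k)}|^{p+\nu}$ and the last sum vanishes identically. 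Likewise, the second sum $\sum_{i=k+q}^n |x^{(i)}|^{p+\nu}$ is zero because each summand involves a coordinate beyond index $k$.

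Plugging these observations back in gives
\begin{equation*}
f_{k+q}(x) = \frac{1}{p+\nu}\left[\sum_{i=1}^{k-1} |x^{(i)} - x^{(i+1)}|^{p+\nu} + |x^{(k)}|^{p+\nu}\right] - x^{(1)}.
\end{equation*}
On the other hand, applying the same vanishing to the definition of $f_k(x)$, the tail $\sum_{i=k}^n |x^{(i)}|^{p+\nu}$ reduces to just $|x^{(k)}|^{p+\nu}$, producing exactly the same expression. Hence $f_{k+q}(x) = f_k(x)$.

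There is no real obstacle; this is purely a bookkeeping identity, and the only point requiring care is keeping track of the boundary term $|x^{(k)} - x^{(k+1)}|^{p+\nu}$ which sits between the two regimes and must be reassigned from the difference-sum in $f_{k+q}$ to the monomial-sum in $f_k$.
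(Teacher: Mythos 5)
Your proof is correct and is simply the fully spelled-out version of what the paper leaves as ``follows directly from (\ref{eq:6.4mais})'': a direct substitution using $x^{(k+1)}=\cdots=x^{(n)}=0$, with the boundary term $|x^{(k)}-x^{(k+1)}|^{p+\nu}$ collapsing to $|x^{(k)}|^{p+\nu}$ on both sides. The bookkeeping is accurate (including the empty sums when $q=1$), so there is nothing to add.
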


\begin{proof}
It follows directly from (\ref{eq:6.4mais}).
\end{proof}

\begin{lemma}
\label{lem:6.4}
Let $\mathcal{M}$ be a $p$-order tensor method satisfying Assumption 1. If $\mathcal{M}$ is applied to the minimization of $f_{t}(\,.\,)$ ($2\leq t\leq n$) starting from $x_{0}=0$, then the sequence $\left\{x_{k}\right\}_{k\geq 0}$ of test points generated by $\mathcal{M}$ satisfies
\begin{equation*}
x_{k+1}\in\sum_{i=0}^{k}S_{f_{t}}(x_{i})\subset\mathbb{R}^{n}_{k+1},\,\,0\leq k \leq t-1.
\end{equation*}
\end{lemma}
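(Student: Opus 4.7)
The plan is to prove, by induction on $k$, the stronger statement that $x_k\in \mathbb{R}^n_k$ for every $0\le k\le t$; the lemma follows at once. The containment $x_{k+1}\in \sum_{i=0}^{k} S_{f_t}(x_i)$ is immediate from Assumption 1 together with $x_0=0$, so the substantive work lies in showing $\sum_{i=0}^{k} S_{f_t}(x_i)\subset \mathbb{R}^n_{k+1}$. I would reduce this to a single subclaim:

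\emph{Subclaim.} For every $\bar x\in \mathbb{R}^n_r$ with $0\le r\le t-1$, one has $S_{f_t}(\bar x)\subset \mathbb{R}^n_{r+1}$.

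To establish the subclaim I would exploit the separable chain structure of $f_t$ in (\ref{eq:6.4mais}). Writing $g(s)\equiv|s|^{p+\nu}$ and applying the chain rule term by term gives, for each $i=1,\ldots,p$,
\[
D^i f_t(\bar x)[h]^i=-\mathbf{1}_{[i=1]}h^{(1)}+\frac{1}{p+\nu}\sum_{j=1}^{t-1}g^{(i)}(\bar x^{(j)}-\bar x^{(j+1)})(h^{(j)}-h^{(j+1)})^i+\frac{1}{p+\nu}\sum_{j=t}^{n}g^{(i)}(\bar x^{(j)})(h^{(j)})^i.
\]
For $\bar x\in \mathbb{R}^n_r$ with $r\le t-1$, every index $j\ge r+1$ in the first sum has $\bar x^{(j)}-\bar x^{(j+1)}=0$, and every index $j\ge t$ in the second sum has $\bar x^{(j)}=0$. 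Using $g^{(i)}(0)=0$ for $i=1,\ldots,p$ (valid because $p+\nu>i$), these tail summands drop out, and $D^i f_t(\bar x)[h]^i$ depends only on $h^{(1)},\ldots,h^{(r+1)}$. Consequently, for any $\bar h\in \Gamma_{\bar x,f_t}(a,\gamma,q)$ and any coordinate $j\ge r+2$, differentiating $\phi_{a,\gamma,q}$ with respect to $h^{(j)}$ leaves only the regularizer contribution, so the stationarity condition reduces to $\gamma q\|\bar h\|^{q-2}\bar h^{(j)}=0$. This forces $\bar h^{(j)}=0$ (trivially if $\bar h=0$), so $\bar h\in \mathbb{R}^n_{r+1}$; taking linear spans over all triples $(a,\gamma,q)$ yields $S_{f_t}(\bar x)\subset \mathbb{R}^n_{r+1}$.

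With the subclaim in hand, the outer induction closes quickly. For the base $k=0$, $x_0=0\in \mathbb{R}^n_0$ and the subclaim gives $S_{f_t}(x_0)\subset \mathbb{R}^n_1$, so Assumption 1 forces $x_1\in \mathbb{R}^n_1$. For the inductive step, if $x_i\in \mathbb{R}^n_i\subset \mathbb{R}^n_{k+1}$ for all $0\le i\le k$, the subclaim yields $S_{f_t}(x_i)\subset \mathbb{R}^n_{i+1}\subset \mathbb{R}^n_{k+1}$; summing over $i$ and invoking Assumption 1 again gives $x_{k+1}\in \mathbb{R}^n_{k+1}$. The main obstacle is the subclaim, and within it the crucial observation that $g^{(i)}(0)=0$ for $i\le p$: this is precisely what makes the polynomial part of $\phi_{a,\gamma,q}$ blind to the tail coordinates $h^{(r+2)},\ldots,h^{(n)}$, so that the uniform regularizer $\gamma\|h\|^q$ can pin these coordinates to zero at any stationary point.
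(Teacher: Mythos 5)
Your proof is correct and is essentially the argument the paper delegates to Lemma 2 of \cite{NES6}: the chain structure of $f_t$ plus the vanishing of the derivatives of $|\cdot|^{p+\nu}$ at the origin makes the polynomial part of $\phi_{a,\gamma,q}$ independent of the tail coordinates, and the regularizer then pins those coordinates to zero at any stationary point. One caveat: your justification ``$g^{(i)}(0)=0$ because $p+\nu>i$'' fails at $i=p$ when $\nu=0$, since then $g^{(p)}$ of $|s|^{p+\nu}=|s|^{p}$ does not vanish at $0$ (for even $p$ it is the constant $p!$, and for $p=2$, $\nu=0$ the function $f_t$ is quadratic and a second-order method sees every coordinate at once); so your argument, like the construction itself, requires $\nu>0$ at that top order. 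This restriction is inherited from the paper's setup rather than introduced by you, but it is worth stating explicitly.
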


\begin{proof}
See Lemma 2 in \cite{NES6}.
\end{proof}

The next lemma gives a lower bound for the norm of the gradient of $f_{t}(\,.\,)$ on suitable points.
\begin{lemma}
\label{lem:6.5}
Let $k$ be an integer in the interval $[1,t-1)$, with $t+1\leq n$. If $x\in\mathbb{R}^{n}_{k}$, then $\|\nabla f_{t}(x)\|_{*}\geq\frac{1}{\sqrt{k+1}}$.
\end{lemma}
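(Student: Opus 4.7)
The plan is to exploit a telescoping cancellation among the first $k+1$ coordinates of $\nabla f_t(x)$, then apply Cauchy--Schwarz. Since $B = I_n$, the dual norm $\|\cdot\|_*$ is the standard Euclidean norm on $\mathbb{R}^n$.

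First I would write down the partial derivatives of $f_t$ explicitly. From (\ref{eq:6.4mais}), for $1 < j < t$ the $j$th component is
\begin{equation*}
\nabla f_t(x)^{(j)} = |x^{(j)}-x^{(j+1)}|^{p+\nu-1}\mathrm{sign}(x^{(j)}-x^{(j+1)}) - |x^{(j-1)}-x^{(j)}|^{p+\nu-1}\mathrm{sign}(x^{(j-1)}-x^{(j)}),
\end{equation*}
while $\nabla f_t(x)^{(1)}$ has the same form with an extra $-1$ coming from the linear term $-x^{(1)}$. Denote the mapping $s \mapsto |s|^{p+\nu-1}\mathrm{sign}(s)$ by $\sigma(\,\cdot\,)$ for brevity; then $\nabla f_t(x)^{(j)} = \sigma(x^{(j)}-x^{(j+1)}) - \sigma(x^{(j-1)}-x^{(j)})$ for $1 < j < t$ and $\nabla f_t(x)^{(1)} = \sigma(x^{(1)}-x^{(2)}) - 1$.

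The key observation is that $k+1 \leq t-1 < t$, so all indices $j=1,\ldots,k+1$ fall inside this ``interior'' range and admit the telescoping structure. I would sum these components: all intermediate $\sigma(x^{(j-1)}-x^{(j)})$ terms cancel pairwise, leaving
\begin{equation*}
\sum_{j=1}^{k+1}\nabla f_t(x)^{(j)} = \sigma(x^{(k+1)}-x^{(k+2)}) - 1.
\end{equation*}
Since $x \in \mathbb{R}^n_k$, we have $x^{(k+1)} = x^{(k+2)} = 0$ (using $k+2 \leq t \leq n$), so $\sigma(x^{(k+1)}-x^{(k+2)}) = 0$, and the sum equals $-1$.

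Finally I would invoke Cauchy--Schwarz against the indicator vector $e_{[1:k+1]}$ of the first $k+1$ coordinates, whose Euclidean norm is $\sqrt{k+1}$:
\begin{equation*}
1 = \left|\,\la \nabla f_t(x),\,e_{[1:k+1]}\ra\,\right| \leq \|\nabla f_t(x)\|_{*}\,\sqrt{k+1},
\end{equation*}
which rearranges to the claimed $\|\nabla f_t(x)\|_{*} \geq 1/\sqrt{k+1}$. The only mild obstacle is bookkeeping the boundary indices correctly: one must verify that $j=1$ contributes the $-1$, that $j=k+1$ still belongs to the interior range $1<j<t$ (which uses $k<t-1$ strictly), and that both vanishing factors $x^{(k+1)}$ and $x^{(k+2)}$ are guaranteed by the hypothesis $x\in\mathbb{R}^n_k$ together with $t+1\leq n$. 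Everything else is a direct computation.
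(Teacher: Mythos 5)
Your proof is correct, and it takes a genuinely different route from the paper's. The paper writes $f_{t}(x)=\eta_{p+\nu}(A_{t}x)-\la e_{1},x\ra$, observes that $\nabla\eta_{p+\nu}(A_{t}x)\in\mathbb{R}^{n}_{k}$, and then lower-bounds $\|\nabla f_{t}(x)\|_{*}^{2}$ by the squared distance from $e_{1}$ to the range of $B=A_{t}^{T}\binom{I_{k}}{0}$, computing the least-squares residual explicitly via the inverse of the tridiagonal matrix $B^{T}B$. You instead produce a dual certificate: summing the first $k+1$ gradient components telescopes to $\sigma(x^{(k+1)}-x^{(k+2)})-1=-1$, and Cauchy--Schwarz against the indicator vector of those coordinates gives the bound. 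Your bookkeeping is right: $k<t-1$ guarantees $k+1\leq t-1$, so every index $j\leq k+1$ carries the difference-term structure, and $x\in\mathbb{R}^{n}_{k}$ together with $k+2\leq t\leq n$ kills the surviving boundary term. The two arguments are in fact dual to one another --- the residual the paper computes, $B(B^{T}B)^{-1}B^{T}e_{1}-e_{1}$, is exactly $-\frac{1}{k+1}$ times your test vector, so your Cauchy--Schwarz step is tight and recovers the identical constant $\frac{1}{\sqrt{k+1}}$. What your version buys is the elimination of the matrix inversion and projection computation (the paper's equations for $(B^{T}B)^{-1}$ and the projected vector), at the cost of having to guess the right test vector; the paper's version is mechanical but longer, and yields the minimizing $y$ as a byproduct.
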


\begin{proof}
In view of (\ref{eq:6.4mais}) we have
\begin{equation}
f_{k}(x)=\eta_{p+\nu}(A_{k}x)-\la e_{1},x\ra,
\label{eq:6.6}
\end{equation}
where
\begin{equation}
\eta_{p+\nu}(u)=\dfrac{1}{p+\nu}\sum_{i=1}^{n}|u^{(i)}|^{p+\nu},
\label{eq:6.7}
\end{equation}
and
\begin{equation}
A_{k}=\left(\begin{array}{cc} U_{k} & 0\\ 0 & I_{n-k} \end{array}\right),\quad\text{with}\quad U_{k}=\left(\begin{array}{rrrrrr} 1      & -1     &  0     & \ldots & 0      & 0\\
                                  0      &  1     & -1     & \ldots & 0      & 0\\
                                  \vdots & \vdots & \vdots &        & \vdots & \vdots\\
                                  0      &  0     &  0     & \ldots & 1      & -1\\
                                  0      &  0     &  0     & \ldots & 0      &  1 
             \end{array}\right)\in\mathbb{R}^{k\times k}.
\label{eq:6.8}
\end{equation}
By (\ref{eq:6.8}) and (\ref{eq:6.7}), we have
\begin{equation*}
\left(\nabla\eta_{p+\nu}(A_{t}x)\right)^{(i)}=\left\{\begin{array}{ll} |x^{(i)}-x^{(i+1)}|^{p+\nu-2}(x^{(i)}-x^{(i+1)}),& i=1,\ldots,t-1.\\
|x^{(i)}|^{p+\nu-2}(x^{(i)}),& i=t,\ldots,n.
\end{array}
\right.
\end{equation*}
Since $x\in\mathbb{R}^{n}_{k}$, it follows that $x^{(i)}=0$ for $i>k$. Therefore,
\begin{equation*}
\left(\nabla\eta_{p+\nu}(A_{t}x)\right)^{(i)}=0,\quad i=k+1,\ldots,n,
\end{equation*}
which means that $\nabla\eta_{p+\nu}(A_{t}x)\in\mathbb{R}^{n}_{k}$. Then, from (\ref{eq:6.6}), we obtain
\begin{eqnarray}
\|\nabla f_{t}(x)\|_{*}^{2}&=&\|A_{t}^{T}\nabla\eta_{p+\nu}(A_{t}x)-e_{1}\|_{*}^{2}\geq \inf_{y\in\mathbb{R}^{n}_{k}}\|A_{t}^{T}y-e_{1}\|_{*}^{2}\nonumber\\
                           &=& \inf_{z\in\mathbb{R}^{k}}\|Bz-e_{1}\|_{*}^{2} \quad\left[\text{where}\,\,B=A_{t}^{T}\left(\begin{array}{c}I_{k}\\ 0\end{array}\right)\right]\nonumber\\
                           &=& \|B(B^{T}B)^{-1}B^{T}e_{1}-e_{1}\|_{*}^{2}\nonumber\\
                           &=&\sum_{i=1}^{n}\left(\left[B(B^{T}B)^{-1}B^{T}e_{1}\right]^{(i)}-(e_{1})^{(i)}\right)^{2}.
\label{eq:6.17extra}
\end{eqnarray}
By (\ref{eq:6.8}), we have
\begin{equation}
B=\left(\begin{array}{c}\tilde{U}\\ 0\end{array}\right)\in\mathbb{R}^{n\times k},\,\,\text{with}\quad \tilde{U}=\left(\begin{array}{rrrrrr} 1 & 0 & 0 & \ldots & 0 & 0\\
                            -1 & 1 & 0 &\ldots & 0 & 0\\
                             0 & -1& 1& \ldots & 0 & 0\\
                             \vdots & \vdots & \vdots & & \vdots &\vdots\\
                             0  & 0 & 0 & \ldots & -1 & 1\\
                             0 & 0 & 0 & \ldots & 0 & 1
                             \end{array}\right)\in\mathbb{R}^{(k+1)\times k}
\label{eq:6.18extra}
\end{equation}
Consequently, 
\begin{equation}
B^{T}e_{1}=\left(\begin{array}{c} 1\\ 0\\ \vdots \\ 0\end{array}\right)\in\mathbb{R}^{k}.
\label{eq:6.19extra}
\end{equation}
and
\begin{equation}
B^{T}B=\left(\begin{array}{rrrrrrrr}2 & -1 & 0 & 0 &\ldots & 0 & 0 & 0\\
                                    -1& 2& -1& 0 &\ldots & 0 & 0 & 0\\
                                    \vdots & \vdots & \vdots & \vdots & & \vdots& \vdots & \vdots \\
                                    0 & 0 & 0 & 0 & \ldots & -1 & 2 & -1\\
                                    0 & 0 & 0 & 0 & \ldots & 0 & -1 & 2 \end{array}\right)\in\mathbb{R}^{k\times k}.
\label{eq:6.20extra}
\end{equation}
From (\ref{eq:6.20extra}), it can be checked that 
\begin{equation}
(B^{T}B)^{-1}=\dfrac{1}{k+1}\tilde{B}\in\mathbb{R}^{k\times k},
\label{eq:6.21extra}
\end{equation}
with 
\begin{equation}
\tilde{B}_{ij}=\left\{\begin{array}{ll} i[(k+1)-j],&\text{if}\,\,j\geq i,\\
                                        j[(k+1)-i],&\text{otherwise}.
                      \end{array}
                      \right.
\label{eq:6.22extra}
\end{equation}
Now, combining (\ref{eq:6.19extra}) and (\ref{eq:6.20extra})--(\ref{eq:6.21extra}), we get
\begin{equation}
\left[(B^{T}B)^{-1}B^{T}e_{1}\right]^{(i)}=\dfrac{(k+1)-i}{k+1},\quad i=1,\ldots,k.
\label{eq:6.23extra}
\end{equation}
Then, it follows from (\ref{eq:6.18extra}) and (\ref{eq:6.23extra}) that
\begin{eqnarray}
\left[B(B^{T}B)^{-1}B^{T}e_{1}\right]^{(i)}&=&\left\{\begin{array}{ll}\frac{k}{k+1},& i=1,\\
                                                  -\frac{(k+1)-(i-1)}{k+1}+\frac{(k+1)-i}{k+1},& i=2,\ldots,k,\\
                                                  -\frac{1}{k+1},& i=k+1,\\
                                                  0,& i=k+2,\ldots,n.
                                                  \end{array}
                                                  \right.\nonumber\\
                                           &=&\left\{\begin{array}{ll} 
                                           \frac{k}{k+1},& i=1,\\
                                           -\frac{1}{k+1},& i=2,\ldots,k+1,\\
                                           0,& i=k+2,\ldots,n.
                                           \end{array}
                                           \right.
\label{eq:6.24extra}
\end{eqnarray}
Finally, by (\ref{eq:6.17extra}) and (\ref{eq:6.24extra}) we have
\begin{eqnarray*}
\|\nabla f_{t}(x)\|_{*}^{2}&\geq &\sum_{i=1}^{n}\left(\left[B(B^{T}B)^{-1}B^{T}e_{1}\right]^{(i)}-(e_{1})^{(i)}\right)^{2}\\
&=& \left(-\dfrac{1}{k+1}\right)^{2}+\sum_{i=2}^{k+1}\left(-\dfrac{1}{k+1}\right)^{2}=\sum_{i=1}^{k+1}\dfrac{1}{(k+1)^{2}}\\
&=&\dfrac{1}{k+1},
\end{eqnarray*}
and the proof is complete.
\end{proof}

The next theorem establishes a lower bound for the rate of convergence of $p$-order tensor methods with respect to the  initial functional residual $(f(x_{0})-f^{*})$.

\begin{theorem}
\label{thm:extra6.2}
Let $\mathcal{M}$ be a $p$-order tensor method satisfying Assumption 1. Assume that for any function $f$ with $H_{f,p}(\nu)<+\infty$ this method ensures the rate of convergence:
\begin{equation}
\min_{1\leq k\leq t-1}\|\nabla f(x_{k})\|_{*}\leq\dfrac{H_{f,p}(\nu)^{\frac{1}{p+\nu}}(f(x_{0})-f^{*})^{\frac{p+\nu-1}{p+\nu}}}{\kappa(t)},\,\,t\geq 2,
\label{eq:extra6.21}
\end{equation}
where $\left\{x_{k}\right\}_{k\geq 0}$ is the sequence generated by method $\mathcal{M}$ and $f^{*}$ is the optimal value of $f$. Then, for all $t\geq 2$ such that $t+1\leq n$ we have
\begin{equation}
\kappa(t)\leq D_{p,\nu}t^{\frac{3(p+\nu)-2}{2(p+\nu)}}\quad\text{with}\quad D_{p,\nu}=\left[2^{\frac{2+\nu}{2}}\Pi_{i=1}^{p-1}(p+\nu-i)\right]^{\frac{1}{p+\nu}}\left[\dfrac{p+\nu-1}{p+\nu}\right]^{\frac{p+\nu-1}{p+\nu}}.
\label{eq:extra6.22}
\end{equation}
\end{theorem}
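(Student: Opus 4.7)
The strategy is to run the method $\mathcal{M}$ from $x_0=0$ on a suitable member of the resisting family (\ref{eq:6.4mais}) and confront the assumed upper bound (\ref{eq:extra6.21}) with an explicit pointwise lower bound on the gradient norm along the generated iterates. Concretely, I would take the test function to be $f_{t+1}$, which belongs to the admissible class by Lemma~\ref{lem:6.1}. The choice $T=t+1$ (rather than $T=t$) is forced by the need to apply Lemma~\ref{lem:6.5} uniformly over the full window $k=1,\ldots,t-1$ in which the minimum of (\ref{eq:extra6.21}) is taken; this implicitly strengthens the hypothesis on $n$ by one (to $t+2\leq n$), but the boundary case $t+1=n$ only affects $D_{p,\nu}$ by a universal constant.

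First I would invoke Lemma~\ref{lem:6.4}, with its ``$t$'' replaced by $t+1$, to conclude that the iterates satisfy $x_k\in\mathbb{R}^n_k$ for every $k=1,\ldots,t+1$. Next, Lemma~\ref{lem:6.5}, applied to $f_{t+1}$, yields $\|\nabla f_{t+1}(x_k)\|_{*}\geq 1/\sqrt{k+1}$ for every $k$ with $1\leq k\leq t-1$, whence
\[
\min_{1\leq k\leq t-1}\|\nabla f_{t+1}(x_k)\|_{*}\;\geq\;\frac{1}{\sqrt{t}}.
\]

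The two problem-dependent constants on the right-hand side of (\ref{eq:extra6.21}) are then supplied by Lemmas~\ref{lem:6.1} and~\ref{lem:6.2}: we have $H_{f_{t+1},p}(\nu)=2^{(2+\nu)/2}\,\Pi_{i=1}^{p-1}(p+\nu-i)$, and since $f_{t+1}(0)=0$, Lemma~\ref{lem:6.2} gives $f_{t+1}(x_0)-f_{t+1}^{*}=(p+\nu-1)(t+1)/(p+\nu)$. Combining these with the assumed rate (\ref{eq:extra6.21}) applied to $f=f_{t+1}$ and with the pointwise lower bound above yields
\[
\frac{1}{\sqrt{t}}\;\leq\;\frac{H_{f_{t+1},p}(\nu)^{1/(p+\nu)}\bigl((p+\nu-1)(t+1)/(p+\nu)\bigr)^{(p+\nu-1)/(p+\nu)}}{\kappa(t)},
\]
i.e. a bound of the form $\kappa(t)\leq D_{p,\nu}\cdot\sqrt{t}\,(t+1)^{(p+\nu-1)/(p+\nu)}$. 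Using the identity $\tfrac{1}{2}+\tfrac{p+\nu-1}{p+\nu}=\tfrac{3(p+\nu)-2}{2(p+\nu)}$ and the crude estimate $t+1\leq 2t$ to absorb a universal constant into $D_{p,\nu}$, one obtains the advertised bound (\ref{eq:extra6.22}).

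The work is essentially book-keeping; there is no deep difficulty once the resisting function is chosen. The only subtle point, and the main obstacle, is precisely that alignment of indices: the rate (\ref{eq:extra6.21}) asks for a lower bound on the minimum over the interval $k\in\{1,\ldots,t-1\}$, while Lemma~\ref{lem:6.5} only supplies nontrivial information on the interval $[1,T-1)$ for the resisting function $f_T$. Matching the two windows forces $T=t+1$, and the remainder reduces to direct substitution of the explicit constants from Lemmas~\ref{lem:6.1} and~\ref{lem:6.2}.
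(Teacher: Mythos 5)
Your proposal follows the same strategy as the paper's proof: run $\mathcal{M}$ from $x_{0}=0$ on a member of the resisting family (\ref{eq:6.4mais}) and combine Lemmas \ref{lem:6.1}, \ref{lem:6.2}, \ref{lem:6.4} and \ref{lem:6.5} with the assumed rate (\ref{eq:extra6.21}). The only real difference is the choice of resisting function. The paper uses $f_{t}$ itself, applies Lemma \ref{lem:6.5} for $k=1,\ldots,t-1$ (including the endpoint $k=t-1$, which lies just outside the half-open interval $[1,t-1)$ in the lemma's statement but is covered by its proof: the computation of $B(B^{T}B)^{-1}B^{T}e_{1}$ goes through verbatim for $k=t-1$, since $B^{T}B$ is still the $(t-1)\times(t-1)$ tridiagonal matrix $\mathrm{tridiag}(-1,2,-1)$), and gets $f_{t}(x_{0})-f_{t}^{*}=(p+\nu-1)t/(p+\nu)$, hence exactly the stated constant $D_{p,\nu}$ and exactly the stated range $t+1\leq n$. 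Your shift to $f_{t+1}$ avoids stretching the stated range of Lemma \ref{lem:6.5}, but it costs you twice: you need $t+2\leq n$ rather than $t+1\leq n$, so the boundary case $t=n-1$ is not covered; and the residual becomes $(p+\nu-1)(t+1)/(p+\nu)$, so after the estimate $t+1\leq 2t$ you only obtain (\ref{eq:extra6.22}) with $D_{p,\nu}$ replaced by $2^{\frac{p+\nu-1}{p+\nu}}D_{p,\nu}$. Since the theorem asserts the specific constant $D_{p,\nu}$, your argument as written establishes a formally weaker statement (same order in $t$, constant larger by a factor below $2$). To recover the exact statement, use $f_{t}$ and observe that the conclusion of Lemma \ref{lem:6.5} in fact holds for every integer $k\in[1,t-1]$.
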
 
\begin{proof}
Suppose that method $\mathcal{M}$ is applied to minimize function $f_{t}(\,.\,)$ with initial point $x_{0}=0$. By Lemma \ref{lem:6.4}, we have $x_{k}\in\mathbb{R}^{n}_{k}$ for all $k$, $1\leq k\leq t-1$. Thus, from Lemma \ref{lem:6.5} it follows that 
\begin{equation}
\min_{1\leq k\leq t-1}\|\nabla f_{t}(x_{k})\|_{*}\geq\min_{1\leq k\leq t-1}\dfrac{1}{\sqrt{k+1}}=\dfrac{1}{\sqrt{t}}.
\label{eq:extra6.24}
\end{equation}
Then, combining (\ref{eq:extra6.21}), (\ref{eq:extra6.24}), Lemma \ref{lem:6.1} and Lemma \ref{lem:6.2} we get
\begin{eqnarray*}
\kappa(t)&\leq &\dfrac{H_{f_{t},p}(\nu)^{\frac{1}{p+\nu}}(f_{t}(x_{0})-f_{t}^{*})}{\min_{1\leq k\leq t-1}\|\nabla f_{t}(x_{k})\|_{*}}\\
&\leq & \left[2^{\frac{2+\nu}{2}}\Pi_{i=1}^{p-1}(p+\nu-i)\right]^{\frac{1}{p+\nu}}\left[\dfrac{p+\nu-1}{p+\nu}\right]^{\frac{p+\nu-1}{p+\nu}}t^{\frac{p+\nu-1}{p+\nu}}t^{\frac{1}{2}}\\
&\leq & D_{p,\nu}(t+1)^{\frac{3(p+\nu)-2}{2(p+\nu)}},
\end{eqnarray*}
where constant $D_{p,\nu}$ is given in (\ref{eq:extra6.22}).
\end{proof}

\begin{remark}
Theorem \ref{thm:extra6.2} gives a lower bound of $\mathcal{O}\left(\left(\frac{1}{k}\right)^{\frac{3(p+\nu)-2}{2(p+\nu)}}\right)$ for the rate of convergence of tensor methods with respect to the initial functional residual. For first-order methods in the Lipschitz case (i.e., $p=\nu=1$), we have $\mathcal{O}\left(\frac{1}{k}\right)$. This gives a lower complexity bound of $\mathcal{O}(\epsilon^{-1})$ iterations for finding $\epsilon$-stationary points of convex functions using first-order methods, which coincides with the lower bound (8a) in \cite{CARMON}. Moreover, in view of Corollary 5.10, Algorithm 6 is suboptimal in terms of the initial residual, with the complexity a complexity gap that increases as $p$ grows. 
\end{remark}

Now, we obtain a lower bound for the rate of convergence of $p$-order tensor methods with respect to the distance $\|x_{0}-x^{*}\|$.
\begin{theorem}
\label{thm:6.2}
Let $\mathcal{M}$ be a $p$-order tensor method satisfying Assumption 1. Assume that for any function $f$ with $H_{f,p}(\nu)<+\infty$ this method ensures the rate of convergence:
\begin{equation}
\min_{1\leq k\leq t-1}\|\nabla f(x_{k})\|_{*}\leq\dfrac{H_{f,p}(\nu)\|x_{0}-x^{*}\|^{p+\nu-1}}{\kappa(t)},\,\,t\geq 2,
\label{eq:6.21}
\end{equation}
where $\left\{x_{k}\right\}_{k\geq 0}$ is the sequence generated by method $\mathcal{M}$ and $x^{*}$ is a global minimizer of $f$. Then, for all $t\geq 2$ such that $t+1\leq n$ we have
\begin{equation}
\kappa(t)\leq L_{p,\nu}(t+1)^{\frac{3(p+\nu)-2}{2}}\quad\text{with}\quad L_{p,\nu}=2^{\frac{2+\nu}{2}}(3)^{-\frac{p+\nu-1}{2}}\Pi_{i=0}^{p-1}(p+\nu-i).
\label{eq:6.22}
\end{equation}
\end{theorem}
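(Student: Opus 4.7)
The approach parallels that of Theorem \ref{thm:extra6.2}, substituting the assumption on the distance $\|x_0-x^*\|$ for the one on the initial functional residual. First I would apply the method $\mathcal{M}$ to the difficult function $f_t(\cdot)$ defined in (\ref{eq:6.4mais}) with starting point $x_0=0$. By Lemma \ref{lem:6.4}, each iterate lies in the coordinate subspace $x_k\in\mathbb{R}^n_k$ for $1\leq k\leq t-1$, so Lemma \ref{lem:6.5} gives the key lower bound
\begin{equation*}
\min_{1\leq k\leq t-1}\|\nabla f_t(x_k)\|_* \;\geq\; \min_{1\leq k\leq t-1}\frac{1}{\sqrt{k+1}} \;=\; \frac{1}{\sqrt{t}}.
\end{equation*}

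Next I would insert this lower bound into the assumed convergence rate (\ref{eq:6.21}) applied to $f_t$ and use $x_0=0$ to get
\begin{equation*}
\frac{1}{\sqrt{t}} \;\leq\; \frac{H_{f_t,p}(\nu)\,\|x_t^*\|^{p+\nu-1}}{\kappa(t)},
\end{equation*}
which rearranges to $\kappa(t)\leq \sqrt{t}\,H_{f_t,p}(\nu)\,\|x_t^*\|^{p+\nu-1}$. Now I would plug in Lemma \ref{lem:6.1} for $H_{f_t,p}(\nu)$ and the bound $\|x_t^*\|<(t+1)^{3/2}/\sqrt{3}$ from Lemma \ref{lem:6.2}. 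This produces an expression of the form $2^{(2+\nu)/2}\cdot 3^{-(p+\nu-1)/2}\cdot(\text{polynomial factor in }p,\nu)\cdot\sqrt{t}\,(t+1)^{3(p+\nu-1)/2}$. Collecting the last two factors via $\sqrt{t}\leq\sqrt{t+1}$ gives the exponent $(3(p+\nu)-2)/2$ stated in (\ref{eq:6.22}).

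The only step that requires care is the bookkeeping of the polynomial prefactor: Lemma \ref{lem:6.1} supplies $\prod_{i=1}^{p-1}(p+\nu-i)$, whereas $L_{p,\nu}$ in (\ref{eq:6.22}) carries $\prod_{i=0}^{p-1}(p+\nu-i)$, a discrepancy of one factor of $(p+\nu)$. This extra factor is easily absorbed by being slightly loose when merging the $\sqrt{t}$ and the $(t+1)^{3(p+\nu-1)/2}$ terms into a single power of $(t+1)$, or by inserting a trivial $(p+\nu)\geq 1$ at the end. Apart from this constant bookkeeping, the argument is a direct assembly of the structural ingredients already provided by Lemmas \ref{lem:6.1}, \ref{lem:6.2}, \ref{lem:6.4}, and \ref{lem:6.5}, so I do not anticipate any real obstacle beyond that calculation.
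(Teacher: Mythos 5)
Your proposal is correct and follows essentially the same route as the paper's own proof: apply $\mathcal{M}$ to $f_t$ from $x_0=0$, combine Lemmas \ref{lem:6.4} and \ref{lem:6.5} to get the lower bound $1/\sqrt{t}$ on the gradient norms, and then insert Lemmas \ref{lem:6.1} and \ref{lem:6.2} into (\ref{eq:6.21}). Your observation about the product index is also right --- the stated $L_{p,\nu}$ simply carries an extra harmless factor of $p+\nu\geq 1$, which is exactly how the paper's final inequality closes.
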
 

\begin{proof}
Let us apply method $\mathcal{M}$ for minimizing function $f_{t}(\,.\,)$ starting from point $x_{0}=0$. By Lemma \ref{lem:6.4}, we have $x_{k}\in\mathbb{R}^{n}_{k}$ for all $k$, $1\leq k\leq t-1$. Thus, from Lemma \ref{lem:6.5} it follows that 
\begin{equation}
\min_{1\leq k\leq t-1}\|\nabla f_{t}(x_{k})\|_{*}\geq\min_{1\leq k\leq t-1}\dfrac{1}{\sqrt{k+1}}=\dfrac{1}{\sqrt{t}}.
\label{eq:6.24}
\end{equation}
Then, combining (\ref{eq:6.21}), (\ref{eq:6.24}), Lemma \ref{lem:6.1} and Lemma \ref{lem:6.2} we get
\begin{eqnarray*}
\kappa(t)&\leq &\dfrac{H_{f_{t},p}(\nu)\|x_{0}-x_{t+1}^{*}\|^{p+\nu-1}}{\min_{1\leq k\leq t-1}\|\nabla f_{t}(x_{k})\|_{*}}\leq 2^{\frac{2+\nu}{2}}\Pi_{i=1}^{p-1}(p+\nu-i)\|x_{t}^{*}\|^{p+\nu-1}t^{\frac{1}{2}}\\
&\leq & 2^{\frac{2+\nu}{2}}\Pi_{i=1}^{p-1}(p+\nu-1)\left[\dfrac{(t+1)^{\frac{3}{2}}}{\sqrt{3}}\right]^{p+\nu-1}(t+1)^{\frac{1}{2}}\leq L_{p,\nu}(t+1)^{\frac{3(p+\nu)-2}{2}},
\end{eqnarray*}
where constant $L_{p,\nu}$ is given in (\ref{eq:6.22}).
\end{proof}

\begin{remark}
Theorem \ref{thm:6.2} establishes that the lower bound for the rate of convergence of tensor methods in terms of the norm of the gradient is also of $\mathcal{O}\left((\frac{1}{k})^{{3(p+\nu)-2\over 2}}\right)$. For first-order methods in the Lipschitz case (i.e., $p=\nu=1$) we have $\mathcal{O}\left(\frac{1}{k^{2}}\right)$. This gives a lower complexity bound of $\mathcal{O}(\epsilon^{-\frac{1}{2}})$ for finding $\epsilon$-stationary points of convex functions using first-order methods, which coincides with the lower bound (8b) in \cite{CARMON}.
\end{remark}

\begin{remark}
The rate of
$\mathcal{O}\left((\frac{1}{k})^{\frac{3(p+\nu)-2}{2}}\right)$
corresponds to a worst-case complexity bound of $\mathcal{O}\left(\epsilon^{-2/[3(p+\nu)-2]}\right)$
iterations necessary to ensure $\|\nabla f(x_{k})\|_{*}\leq\epsilon$. Note that, for $\epsilon\in (0,1)$, we have
\begin{equation*}
\left(\frac{1}{\epsilon}\right)^{\frac{p+\nu}{(p+\nu-1)(p+\nu+1)}}\leq\left(\dfrac{1}{\epsilon}\right)^{\frac{1}{p+\nu-1}}\leq \left(\frac{1}{\epsilon}\right)^{\frac{p+1}{(p-1)(3p-2)}}\left(\frac{1}{\epsilon}\right)^{\frac{2}{3(p+\nu)-2}}.
\end{equation*}
Thus, by increasing the power of the oracle (i.e., the order $p$), our non-universal schemes become nearly optimal. For example, if $\epsilon=10^{-6}$ and $p\geq 4$, we have 
$
\left(\frac{1}{\epsilon}\right)^{\frac{1}{p+\nu-1}}\leq 10\left(\frac{1}{\epsilon}\right)^{\frac{2}{3(p+\nu)-2}}.
$
\end{remark}
\section{Conclusion}

In this paper, we presented $p$-order methods that can find $\epsilon$-approximate stationary points of convex functions that are
$p$-times differentiable with $\nu$-H\"{o}lder continuous
$p$th derivatives. For the universal and the non-universal
schemes without acceleration, we established iteration
complexity bounds of
$\mathcal{O}\left(\epsilon^{-1/(p+\nu-1)}\right)$ for
finding $\bar{x}$ such that $\|\nabla f(\bar{x})\|_{*}\leq\epsilon$. For the case in which $\nu$ is known, we
obtain improved complexity bounds of of $\mathcal{O}\left(\epsilon^{-(p+\nu)/[(p+\nu-1)(p+\nu+1)]}\right)$ and $\mathcal{O}\left(|\log(\epsilon)|\epsilon^{-1/(p+\nu)}\right)$ for the corresponding
accelerated schemes. For the case in which $\nu$ is unknown, we obtained a bound of  $\mathcal{O}\left(\epsilon^{-(p+1)/[(p+\nu-1)(p+2)]}\right)$ for a universal accelerated scheme. Similar bounds were also obtained for tensor schemes adapted to the minimization of composite convex functions. A lower complexity bound of
$\mathcal{O}(\epsilon^{-2/[3(p+\nu)-2]})$ was obtained for
the referred problem class. Therefore, in practice, our non-universal schemes become nearly optimal as we increase the order $p$.

As an additional result, we showed that Algorithm 6 takes at most $\mathcal{O}\left(\log(\epsilon^{-1})\right)$ iterations to find $\epsilon$-stationary points of uniformly convex functions of degree $p+\nu$ in the form (5.64). Notice that strongly convex functions are uniformly convex of degree 2. Thus, our result generalizes the known bound of $\mathcal{O}\left(\log(\epsilon^{-1})\right)$ obtained for first-order schemes ($p=1$) applied to strongly convex functions with Lipschitz continuous gradients ($\nu=1$). At this point, it is not clear to us how $p$-order methods (with $p\geq 2$) behave when the objective functions is strongly convex with $\nu$-H\"{o}lder continuous $p$th derivatives. Neverthless, from the remarks done in \cite[p. 6]{DOI} for $p=2$, it appears that in our case the class of uniformly convex functions of degree $p+\nu$ is the most suitable for $p$-order methods from a physical point of view.

\section*{Acknowledgments}
The authors are very grateful to an anonymous referee, whose comments helped to improve the first version of this paper.

\section*{Funding}
G.N. Grapiglia was supported by the National Council for Scientific and Technological Development - Brazil (grant 406269/2016-5) and by the European Research Council Advanced Grant 788368. Yu. Nesterov was supported by the European Research Council Advanced Grant 788368.

\appendix

\section{Accelerated Scheme for Composite Minimization}

To solve problem (\ref{eq:ale5.1}), we can apply the following modification of Algorithm 3 in \cite{GN3}:
\begin{mdframed}
\noindent\textbf{Algorithm A. Accelerated Tensor Method for Composite Minimization}
\\[0.2cm]
\noindent\textbf{Step 0.} Choose $x_{0}\in\dom\varphi$, $\theta\geq 0$ and define $\psi_{0}(x)=\frac{1}{p+\nu}\|x-x_{0}\|^{p+\nu}$. Set $M\geq (p+\nu-1)\left(H_{f,p}(\nu)+\theta (p-1)!\right)$, $v_{0}=x_{0}$, $A_{0}=0$ and $t:=0$.\\
\noindent\textbf{Step 1.} Compute $a_{t}>0$ by solving the equation
\begin{equation}
a_{t}^{p+\nu}=\dfrac{1}{2^{(3p-1)}}\left[\dfrac{(p-1)!}{M}\right](A_{t}+a_{t})^{p+\nu-1}.
\label{eq:B1}
\end{equation}
\noindent\textbf{Step 2.} Compute $y_{t}=(1-\gamma_{t})x_{t}+\gamma_{t}v_{t}$ with $\gamma_{t}=a_{t}/[A_{t}+a_{t}]$.\\
\noindent\textbf{Step 3.} Compute an approximate solution $x_{t+1}$ to $\min_{x\in\E}\tilde{\Omega}_{y_{t},p,M}(x)$ such that
\begin{equation}
\tilde{\Omega}_{y_{t},p,M}(x_{t+1})\leq \tilde{f}(y_{t})\quad\text{and}\quad \|\nabla\Omega_{y_{t},p,M}(x_{t+1})+g_{\varphi}(x_{t+1})\|_{*}\leq\theta\|x_{t+1}-y_{t}\|^{p+\nu-1},
\label{eq:B2}
\end{equation}
for some $g_{\varphi}(x_{t+1})\in\partial\varphi (x_{t+1})$.\\
\noindent\textbf{Step 4.} Define $\psi_{t+1}(x)=\psi_{t}(x)+a_{t}\left[f(x_{t+1})+\la\nabla f(x_{t+1}),x-x_{t+1}\ra +\varphi(x)\right]$.\\
\noindent\textbf{Step 5.} Set $t:=t+1$ and go to Step 1.
\end{mdframed}

In order to establish a convergence rate for Algorithm B, we will need the following result.
\begin{lemma}
\label{lem:B1}
Suppose that H1 holds and let $x^{+}$ be an approximate solution to $\min_{y\in\E}\tilde{\Omega}_{x,p,H}^{(\nu)}(y)$ such that
\begin{equation}
\tilde{\Omega}_{x,p,H}^{(\nu)}(x^{+})\leq\tilde{f}(x)\quad\text{and}\quad \|\nabla\Omega_{x,p,H}^{(\nu)}(x^{+})+g_{\varphi}(x^{+})\|\leq\theta\|x^{+}-x\|^{p+\nu-1},
\label{eq:B5}
\end{equation}
for some $g_{\varphi}(x^{+})\in\partial\varphi(x^{+})$. If $H\geq (p+\nu-1)\left(H_{f,p}(\nu)+\theta (p-1)!\right)$, then
\begin{equation}
\la\nabla\tilde{f}(x^{+}),x-x^{+}\ra\geq\dfrac{1}{3}\left[\dfrac{(p-1)!}{H}\right]^{\frac{1}{p+\nu-1}}\|\nabla\tilde{f}(x^{+})\|_{*}^{\frac{p+\nu}{p+\nu-1}}.
\label{eq:B4}
\end{equation}
\end{lemma}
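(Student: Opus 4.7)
The plan is to combine an upper bound on $\|\nabla\tilde f(x^+)\|_*$ with a lower bound on $\langle\nabla\tilde f(x^+),x-x^+\rangle$, both expressed as powers of $r:=\|x^+-x\|$, and then eliminate $r$ via the exponent arithmetic familiar from the proof of Lemma~\ref{lem:ale5.2}.

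First I would establish an estimate of the form $\|\nabla\tilde f(x^+)\|_*\le K_1\,r^{p+\nu-1}$. Decomposing
\[
\nabla\tilde f(x^+)=\bigl[\nabla f(x^+)-\nabla\Phi_{x,p}(x^+)\bigr]+\bigl[\nabla\Omega_{x,p,H}^{(\nu)}(x^+)+g_\varphi(x^+)\bigr]-\bigl[\nabla\Omega_{x,p,H}^{(\nu)}(x^+)-\nabla\Phi_{x,p}(x^+)\bigr],
\]
the triangle inequality bounds the first piece by $\frac{H_{f,p}(\nu)}{(p-1)!}r^{p+\nu-1}$ via (\ref{eq:2.5}), the second by $\theta r^{p+\nu-1}$ via (\ref{eq:B5}), and the third is exactly the gradient of the regularizer $\frac{H}{p!}\|\cdot-x\|^{p+\nu}$, which in our Euclidean setting has dual norm $\frac{H(p+\nu)}{p!}r^{p+\nu-1}$. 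The hypothesis $H\ge (p+\nu-1)(H_{f,p}(\nu)+\theta(p-1)!)$ lets me merge the first two error terms, since it is equivalent to $\frac{H_{f,p}(\nu)}{(p-1)!}+\theta\le\frac{H}{(p+\nu-1)(p-1)!}$.

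Second, I would compute the inner product using the same decomposition. The key identity $\langle B(x^+-x),x-x^+\rangle=-r^2$ converts the regularizer gradient contribution into a positive main term, giving
\[
\langle\nabla\tilde f(x^+),x-x^+\rangle=\tfrac{H(p+\nu)}{p!}r^{p+\nu}+\langle\nabla f(x^+)-\nabla\Phi_{x,p}(x^+),x-x^+\rangle+\langle\nabla\Omega_{x,p,H}^{(\nu)}(x^+)+g_\varphi(x^+),x-x^+\rangle.
\]
Cauchy--Schwarz combined with (\ref{eq:2.5}) and (\ref{eq:B5}) controls the last two terms by at most $\bigl(\tfrac{H_{f,p}(\nu)}{(p-1)!}+\theta\bigr)r^{p+\nu}$, and the hypothesis on $H$ again allows the positive main term to dominate, yielding $\langle\nabla\tilde f(x^+),x-x^+\rangle\ge K_2\,r^{p+\nu}$ for an explicit $K_2>0$ depending on $p,\nu,H$.

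Finally, inverting the first estimate as $r^{p+\nu-1}\ge K_1^{-1}\|\nabla\tilde f(x^+)\|_*$ and writing $r^{p+\nu}=r\cdot r^{p+\nu-1}$ to raise this to the power $(p+\nu)/(p+\nu-1)$ gives
\[
\langle\nabla\tilde f(x^+),x-x^+\rangle\ge \frac{K_2}{K_1^{(p+\nu)/(p+\nu-1)}}\,\|\nabla\tilde f(x^+)\|_*^{(p+\nu)/(p+\nu-1)},
\]
which has the required form $c\,[(p-1)!/H]^{1/(p+\nu-1)}\|\nabla\tilde f(x^+)\|_*^{(p+\nu)/(p+\nu-1)}$ once $K_1,K_2$ are expanded. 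The main obstacle is the algebraic check that the resulting constant $c$ meets the threshold $1/3$ for all admissible $p\ge 2$ and $\nu\in[0,1]$: the hypothesis $H\ge(p+\nu-1)(H_{f,p}(\nu)+\theta(p-1)!)$ is calibrated precisely so that $K_2/K_1^{(p+\nu)/(p+\nu-1)}$ has the desired dependence on $H$, but confirming the numerical factor requires reducing the comparison to an inequality between $\gamma_{p,\nu}\,p^{1/(p+\nu-1)}$ and $\tfrac13\beta_{p,\nu}^{(p+\nu)/(p+\nu-1)}$, with $\beta_{p,\nu}=\frac{(p+\nu)^2-\nu}{p+\nu-1}$ and $\gamma_{p,\nu}=\frac{(p+\nu)(p+\nu-1)-p}{p+\nu-1}$, an inequality which is tight in the low-order regime and is where the careful choice of the hypothesis pays off.
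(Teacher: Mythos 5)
Your outline—bound $\|\nabla\tilde f(x^+)\|_*$ above by $K_1 r^{p+\nu-1}$ and $\la\nabla\tilde f(x^+),x-x^+\ra$ below by $K_2 r^{p+\nu}$, then eliminate $r$—has the right shape, and your upper bound is exactly the computation the paper performs in (\ref{eq:ale5.10}). The gap is in the second step: obtaining the lower bound on the inner product purely by Cauchy--Schwarz on each piece of the decomposition is too lossy to reach the constant $\tfrac13$. Your $K_2$ is at best $\frac{H(p+\nu)}{p!}-\frac{H_{f,p}(\nu)}{(p-1)!}-\theta\ge\frac{H}{p!}\gamma_{p,\nu}$ with your $\gamma_{p,\nu}=\frac{(p+\nu)(p+\nu-1)-p}{p+\nu-1}$, and this \emph{vanishes} at $p=2$, $\nu=0$ (where $(p+\nu)(p+\nu-1)=p$), so the method proves nothing there even though that case is covered by the lemma. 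Away from that degenerate point the final inequality you identify, $\gamma_{p,\nu}\,p^{1/(p+\nu-1)}\ge\tfrac13\beta_{p,\nu}^{(p+\nu)/(p+\nu-1)}$, is simply false for small $p+\nu$: at $p=2$, $\nu=\tfrac12$ one gets $\beta_{2,1/2}=\tfrac{23}{6}$, $\gamma_{2,1/2}=\tfrac76$, so the left side is $\approx 1.85$ while the right side is $\approx 3.13$; equivalently, at the threshold value $H=(p+\nu-1)(H_{f,p}(\nu)+\theta(p-1)!)$ your route yields roughly $0.20\,H^{-2/3}\|\nabla\tilde f(x^+)\|_*^{5/3}$ where $\tfrac13 H^{-2/3}\|\nabla\tilde f(x^+)\|_*^{5/3}$ is required. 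So the hypothesis on $H$ is \emph{not} calibrated to close the linear argument.

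What it is calibrated for is the quadratic argument the paper uses. Writing $\nabla\tilde f(x^+)+\frac{H(p+\nu)}{p!}r^{p+\nu-2}B(x^+-x)=\nabla f(x^+)-\nabla\Phi_{x,p}(x^+)+\nabla\Omega_{x,p,H}^{(\nu)}(x^+)+g_\varphi(x^+)$, the paper bounds the dual norm of this residual by $\bigl(\frac{H_{f,p}(\nu)}{(p-1)!}+\theta\bigr)r^{p+\nu-1}$ using (\ref{eq:2.5}) and (\ref{eq:B5}), and then \emph{squares}, as in (\ref{eq:B8}). Expanding the square produces the cross term $\frac{2H(p+\nu)}{p!}r^{p+\nu-2}\la\nabla\tilde f(x^+),x^+-x\ra$ together with an extra $\|\nabla\tilde f(x^+)\|_*^2$ on the favorable side of the inequality; it is precisely this surviving quadratic term that keeps the bound non-degenerate when $\frac{H(p+\nu)}{p!}$ and $\frac{H}{(p+\nu-1)(p-1)!}$ coincide or nearly coincide (the low-order regime), after which the exponent arithmetic of Lemma A.6 in \cite{GN3} delivers (\ref{eq:B4}) with the stated constant. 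To repair your proof you would need to replace the termwise Cauchy--Schwarz step by this squared-residual expansion (or otherwise retain a $\|\nabla\tilde f(x^+)\|_*^2$ contribution); as written, the argument cannot establish the lemma for all $p\ge2$ and $\nu\in[0,1]$.
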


\begin{proof}
Denote $r=\|x^{+}-x\|$. Then,
\begin{eqnarray*}
\|\nabla\tilde{f}(x^{+})+\dfrac{H(p+\nu)}{p!}r^{p+\nu-2}B(x^{+}-x)\|_{*}& = & \|\nabla f(x^{+})-\nabla\Phi_{x,p}(x^{+})+\nabla\Omega_{x,p,H}^{(\nu)}(x^{+})+g_{\varphi}(x^{+})\|_{*}\\
&\leq &\|\nabla f(x^{+})-\nabla\Phi_{x,p}(x^{+})\|_{*}+\|\nabla\Omega_{x,p,H}^{(\nu)}(x^{+})+g_{\varphi}(x^{+})\|_{*}\\
&\leq &\left(\dfrac{H_{f,p}(\nu)}{(p-1)!}+\theta\right)r^{p+\nu-1},
\end{eqnarray*}
which gives
\begin{eqnarray}
\left(\dfrac{H_{f,p}(\nu)}{(p-1)!}+\theta\right)r^{2(p+\nu-1)}&\geq & \|\nabla\tilde{f}(x^{+})+\dfrac{H(p+\nu)}{p!}r^{p+\nu-2}B(x^{+}-x)\|_{*}^{2}\nonumber\\
& = & \|\nabla\tilde{f}(x^{+})\|_{*}^{2}+\dfrac{2(p+\nu)}{p!}Hr^{p+\nu-2}\la\nabla \tilde{f}(x^{+}),x^{+}-x\ra\nonumber\\ &  &+\dfrac{H^{2}(p+\nu)^{2}}{(p!)^{2}}r^{2(p+\nu-1)}.
\label{eq:B8}
\end{eqnarray}
From (\ref{eq:B8}), the rest of the proof follows exactly as in the proof of Lemma A.6 in \cite{GN3}.
\end{proof}

\begin{theorem}
\label{thm:B2}
Suppose that H1 holds and let the sequence $\left\{x_{t}\right\}_{t=0}^{T}$ be generated by Algorithm B. Then, for $t=2,\ldots,T$,
\begin{equation}
\tilde{f}(x_{t})-\tilde{f}(x^{*})\leq\dfrac{2^{3p-1}M(p+\nu)^{p+\nu}\|x_{0}-x^{*}\|^{p+\nu}}{(p-1)!(t-1)^{p+\nu}}.
\label{eq:B9}
\end{equation}
\end{theorem}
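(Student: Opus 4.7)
The plan is to follow the standard estimate-sequence strategy used for accelerated high-order methods (as in the proof of Theorem 4.2 of \cite{GN3}), adapted to the composite setting by invoking Lemma \ref{lem:B1} in place of the non-composite progress inequality. The goal is to establish two matching bounds on $\psi_t^{\ast}:=\min_x\psi_t(x)$: an upper bound in terms of $\tilde{f}(x^{\ast})$ and $\|x_0-x^{\ast}\|$, and a lower bound in terms of $\tilde{f}(x_t)$. Their combination will give $A_t(\tilde{f}(x_t)-\tilde{f}(x^{\ast}))\leq \frac{1}{p+\nu}\|x_0-x^{\ast}\|^{p+\nu}$, after which only a lower bound on $A_t$ coming from the recursion (\ref{eq:B1}) will remain.

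For the upper bound, convexity of $f$ gives $f(x_{t+1})+\langle\nabla f(x_{t+1}),x-x_{t+1}\rangle\leq f(x)$, so Step 4 of Algorithm A yields $\psi_{t+1}(x)\leq \psi_t(x)+a_t\tilde{f}(x)$; iterating from $\psi_0$ produces $\psi_t(x)\leq \frac{1}{p+\nu}\|x-x_0\|^{p+\nu}+A_t\tilde{f}(x)$, and specializing to $x=x^{\ast}$ yields the required bound on $\psi_t^{\ast}$. For the matching lower bound $\psi_t^{\ast}\geq A_t\tilde{f}(x_t)$ I argue by induction on $t$; the base case $t=0$ is immediate since $A_0=0$. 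The inductive step uses that $\psi_t$ inherits from $\psi_0$ the uniform convexity of degree $p+\nu$ with parameter $\sigma=2^{-(p+\nu-2)}$ (Lemma 5 of \cite{DOI}), so $\psi_t(x)\geq A_t\tilde{f}(x_t)+\frac{\sigma}{p+\nu}\|x-v_t\|^{p+\nu}$. Adding the linearization increment from Step 4 and evaluating at $x=v_{t+1}$, convexity of $f$ and of $\varphi$ allows me to bundle the terms anchored at $x_t$ and $v_{t+1}$ into the convex combination $\tilde{y}:=(A_t x_t+a_t v_{t+1})/A_{t+1}$, which differs from $y_t$ by exactly $\gamma_t(v_{t+1}-v_t)$. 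Bringing in a subgradient $g_\varphi(x_{t+1})\in\partial\varphi(x_{t+1})$ through convexity of $\varphi$ rewrites the remaining linear piece as $A_{t+1}\langle\nabla\tilde{f}(x_{t+1}),\tilde{y}-x_{t+1}\rangle$, at which point Lemma \ref{lem:B1} (applied at $y_t$, together with a bound on $|\langle\nabla\tilde{f}(x_{t+1}),\tilde{y}-y_t\rangle|$) delivers a positive quantity proportional to $\|\nabla\tilde{f}(x_{t+1})\|_{\ast}^{(p+\nu)/(p+\nu-1)}$.

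The main obstacle will be the Young-type manipulation needed to absorb the cross term $\gamma_t\|\nabla\tilde{f}(x_{t+1})\|_{\ast}\|v_{t+1}-v_t\|$ arising from $\tilde{y}-y_t$ into the sum of $\frac{\sigma}{(p+\nu)A_{t+1}}\|v_{t+1}-v_t\|^{p+\nu}$ and the $\|\nabla\tilde{f}(x_{t+1})\|_{\ast}^{(p+\nu)/(p+\nu-1)}$ term produced by Lemma \ref{lem:B1}. A direct Young split reduces the required condition to an inequality of the form $a_t^{p+\nu}/A_{t+1}^{p+\nu-1}\leq c\,\sigma(p-1)!/M$, and the factor $1/2^{3p-1}$ in (\ref{eq:B1}) is precisely calibrated so that the left-hand side equals $(p-1)!/(2^{3p-1}M)$ and the inequality holds for every $p\geq 2$ and $\nu\in[0,1]$, closing the induction. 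Once the estimate sequence is in place, a standard induction matching the ansatz $A_t\approx C t^{p+\nu}$ against the growth $a_t=C^{1/(p+\nu)}A_{t+1}^{(p+\nu-1)/(p+\nu)}$ implied by (\ref{eq:B1}) produces $A_t\geq C(t-1)^{p+\nu}$ with $C$ of order $(p-1)!/(M(p+\nu)^{p+\nu})$, and this combined with the estimate sequence yields (\ref{eq:B9}).
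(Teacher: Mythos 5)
Your proposal is correct and follows essentially the same route as the paper: the same estimate-sequence argument with the upper bound $\psi_t(x)\leq A_t\tilde{f}(x)+\frac{1}{p+\nu}\|x-x_0\|^{p+\nu}$ from convexity, the inductive lower bound $\psi_t^{\ast}\geq A_t\tilde{f}(x_t)$ via uniform convexity of degree $p+\nu$ with parameter $2^{-(p+\nu-2)}$ combined with Lemma \ref{lem:B1}, and the growth estimate $A_t\geq C(t-1)^{p+\nu}$ from (\ref{eq:B1}). Your reorganization around $\tilde{y}=(A_tx_t+a_tv_{t+1})/A_{t+1}$ and evaluation at $v_{t+1}$ is equivalent to the paper's minimization over $x$ after substituting $A_tx_t=A_{t+1}y_t-a_tv_t$, so there is no substantive difference.
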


\begin{proof}
For all $t\geq 0$, we have
\begin{equation}
\psi_{t}(x)\leq A_{t}\tilde{f}(x)+\dfrac{1}{p+\nu}\|x-x_{0}\|^{p+\nu},\quad\forall x\in\E.
\label{eq:B10}
\end{equation}
Indeed, (\ref{eq:B10}) is true for $t=0$ because $A_{0}=0$ and $\psi_{0}(x)=\frac{1}{p+\nu}\|x-x_{0}\|^{p+\nu}$. Suppose that (\ref{eq:B10}) is true for some $t\geq 0$. Then,
\begin{eqnarray*}
\psi_{t+1}(x)& = & \psi_{t}(x)+a_{t}\left[f(x_{t+1})+\la\nabla f(x_{t+1}),x-x_{t+1}\ra+\varphi(x)\right]\\
 &\leq & A_{t}\tilde{f}(x)+a_{t}\tilde{f}(x)+\frac{1}{p+\nu}\|x-x_{0}\|^{p+\nu} = A_{t+1}\tilde{f}(x)+\dfrac{1}{p+\nu}\|x-x_{0}\|^{p+\nu}.
\end{eqnarray*}
Thus, (\ref{eq:B10}) follows by induction. Now, let us prove that
\begin{equation}
A_{t}\tilde{f}(x_{t})\leq\psi_{t}^{*}\equiv\min_{x\in\E}\psi_{t}(x).
\label{eq:B11}
\end{equation}
Again, using $A_{0}=0$, we see that (\ref{eq:B11}) is true for $t=0$. Assume that (\ref{eq:B11}) is true for some $t\geq 0$. Note that $\psi_{t}(\,.\,)$ is uniformly convex of degree $p+\nu$ with parameter $2^{-(p+\nu-2)}$. Thus, by the induction assumption 
\begin{equation*}
\psi_{t}(x)\geq \psi_{t}^{*}+\dfrac{2^{-(p+\nu-2)}}{p+\nu}\|x-v_{t}\|^{p+\nu}\\
           \geq  A_{t}\tilde{f}(x_{t})+\dfrac{2^{-(p+\nu-2)}}{p+\nu}\|x-v_{t}\|^{p+\nu}.
\end{equation*}
Consequently,
\begin{eqnarray}
\psi_{t+1}^{*}& = & \min_{x\in\dom\varphi}\left\{\psi_{t}(x)+a_{t}\left[f(x_{t+1})+\la\nabla f(x_{t+1}),x-x_{t+1}+\varphi(x)\right]\right\}\nonumber\\
&\geq &\min_{x\in\dom\varphi}\left\{A_{t}\tilde{f}(x_{t})+\dfrac{2^{-(p+\nu-2)}}{p+\nu}\|x-v_{t}\|^{p+\nu}\right.\nonumber\\
& & \left. +a_{t}\left[f(x_{t+1})+\la\nabla f(x_{t+1}),x-x_{t+1}\ra+\varphi(x)\right]\right\}.
\label{eq:B12}
\end{eqnarray}
Since $f$ is convex and differentiable and $g_{\varphi}(x_{t+1})\in\partial\varphi(x_{t+1})$, we have
\begin{equation}
\tilde{f}(x_{t})\geq \tilde{f}(x_{t+1})+\la\nabla\tilde{f}(x_{t+1}),x_{t}-x_{t+1}\ra
\label{eq:B13}
\end{equation}
and
\begin{equation}
\varphi(x)\geq\varphi(x_{t+1})+\la g_{\varphi}(x_{t+1}),x-x_{t+1}\ra.
\label{eq:B14}
\end{equation}
Using (\ref{eq:B13}) and (\ref{eq:B14}) in (\ref{eq:B12}), it follows that
\begin{eqnarray}
\psi_{t+1}^{*}&\geq &\min_{x\in\dom\varphi}\left\{A_{t+1}\tilde{f}(x_{t+1})+\la\nabla\tilde{f}(x_{t+1}),A_{t}x_{t}-A_{t}x_{t+1}\ra\right.\nonumber\\
& & \left. +a_{t}\la\nabla\tilde{f}(x_{t+1}),x-x_{t+1}\ra+\dfrac{2^{-(p+\nu-2)}}{p+\nu}\|x-v_{t}\|^{p+\nu}\right\}.
\label{eq:B15}
\end{eqnarray}
Note that $A_{t}x_{t}=A_{t+1}y_{t}-a_{t}v_{t}$ and $A_{t+1}x_{t+1}=A_{t}x_{t+1}+a_{t}x_{t+1}$. Thus, combining (\ref{eq:B15}) and Lemma \ref{lem:B1}, we obtain
\begin{eqnarray*}
\psi_{t+1}^{*}&\geq &
A_{t+1}\tilde{f}(x_{t+1})+\min_{x\in\dom\varphi}\left\{A_{t+1}\frac{1}{4}\left[\dfrac{(p-1)!}{M}\right]^{\frac{1}{p+\nu-1}}\|\nabla\tilde{f}(x_{t+1})\|_{*}^{\frac{p+\nu}{p+\nu-1}}\right.\\
& &\left. +a_{t}\la\nabla\tilde{f}(x_{t+1}),x-v_{t}\ra+\dfrac{2^{-(p+\nu-2)}}{p+\nu}\|x_{t}-v_{t}\|^{p+\nu}\right\}\geq A_{t+1}\tilde{f}(x_{t+1}),
\end{eqnarray*}
where the last inequality follows from (\ref{eq:B1}) exactly as in the proof of Theorem 4.2 in \cite{GN}. Thus, (\ref{eq:B11}) also holds for $t+1$, which completes the induction argument. 

Now, combining (\ref{eq:B10}) and (\ref{eq:B11}) we have
\begin{equation}
\tilde{f}(x_{t})-\tilde{f}(x^{*})\leq\dfrac{1}{A_{t}}\left[\dfrac{1}{p+\nu}\|x_{0}-x^{+}\|^{p+\nu}\right].
\label{eq:B18}
\end{equation}
Once again, as in the proof of Theorem 4.2 in \cite{GN3}, it follows from (\ref{eq:B1}) that
\begin{equation}
A_{t}\geq\dfrac{(p-1)!}{2^{3p-1}M}\left[\dfrac{1}{p+\nu}\left(\dfrac{1}{2}\right)^{\frac{p+\nu-1}{p+\nu}}\right]^{p+\nu}(t-1)^{p+\nu},\quad\forall t\geq 2.
\label{eq:B19}
\end{equation}
Finally, (\ref{eq:B9}) follows directly from (\ref{eq:B18}) and (\ref{eq:B19}).
\end{proof}


\begin{thebibliography}{10}

\bibitem{BAES} M. Baes: Estimate Sequence Methods: Extensions and Approximations. Optimization Online (2009)

\bibitem{Birgin} E. G. Birgin, J. L. Gardenghi, J. M. Mart\'inez, S. A. Santos, and Ph. L. Toint: Worst-case evaluation complexity for unconstrained nonlinear optimization using high-order regularized models. Mathematical Programming \textbf{163}, 359-368 (2017).

\bibitem{BOL} Bolte, J., Sabach, S., Teboulle, M., Vaesburg, Y.: First Order Methods Beyond Convexity and Lipschitz Cradient Continuity with Applications to Quadratic Inverse Problems. SIAM Journal on Optimization \textbf{28}, 2131--2151 (2018)

\bibitem{BOU} Bouaricha, A.: Tensor methods for large, sparse unconstrained optimization. SIAM Journal on Optimization \textbf{7}, 732--756 (1997)

\bibitem{BUB} Bubeck, S., Jiang, Q., Lee, Y.T., Li, Y., Sidford, A.: Near-optimal method for highly smooth convex optimization. arXiv: 1812.08026v2 [math. OC] (2019)

\bibitem{CARMON} Carmon, Y., Duchi, J.C., Hinder, O., Sidford, A.: Lower Bounds for Finding Stationary Points II: First-Order Methods. arXiv: 1711.00841 [math. OC] (2017)

\bibitem{CGT0} Cartis, C., Gould, N.I.M., and Toint, Ph.L.: Adaptive cubic regularization methods for unconstrained optimization. Part II: worst-case function - and derivative - evaluation complexity. Mathematical Programming \textbf{130}, 295-319 (2011)


\bibitem{CGT1} Cartis, C., Gould, N.I.M., Toint, Ph.L.: Second-order optimality and beyond: Characterization and evaluation complexity in convexly constrained nonlinear optimization. Foundations of Computational Mathematics \textbf{18}, 1073--1107 (2018)

\bibitem{CGT3} Cartis, C., Gould, N.I.M., Toint, Ph.L.: Strong Evaluation Complexity Bounds for Arbitrary-Order Optimization of Nonconvex Nonsmooth Composite Functions. arXiv: 2001.10802 [math. OC] (2020)


\bibitem{CGT2}
Cartis, C., Gould, N.I.M., Toint, Ph.L.: Universal
regularized methods - varying the power, the smoothness,
and the accuracy. SIAM Journal on Optimization \textbf{29}, 595--615 (2019)


\bibitem{CHEN1} Chen, X., Toint, Ph.L., Wang, H.: Complexity of Partially Separable Convexly Constrained Optimization with Non-Lipschitz Singularities. SIAM Journal on Optimization \textbf{29}, 874--903 (2019)

\bibitem{CHEN2} Chen, X., Toint, Ph.L.: High-order evaluation complexity for convexly-constrained optimization with non-Lipschitzian group sparsity terms. Mathematical Programming (2020). https://doi.org/10.1007/s10107-020-01470-9

\bibitem{DOI} Doikov, N., Nesterov, Yu.: Minimizing Uniformly Convex Functions by Cubic Regularization of Newton Method. arXiv: 1905.02671 [math. OC] (2019)

\bibitem{DOI2} Doikov, N., Nesterov, Yu.: Contracting Proximal Methods for Smooth Convex Optimization. CORE Discussion Paper 2019/27

\bibitem{DOI3} Doikov, N., Nesterov, Yu.: Inexact Tensor Methods with Dynamic Accuracies. arXiv: 2002.09403 [math. OC] (2020)

\bibitem{GAS} Gasnikov, A., Dvurechensky, P., Gorbunov, E., Vorontsova, E., Selikhanovych, D., Uribe, C.A.: The global rate of convergence for optimal tensor methods in smooth convex optimization. arXiv: 1809.00389v11 [math. OC] (2019)

\bibitem{GN} Grapiglia, G.N., Nesterov, Yu.: Regularized Newton Methods for minimizing functions with H\"{o}lder continuous Hessians. SIAM Journal on Optimization \textbf{27}, 478-506 (2017)

\bibitem{GN2} Grapiglia, G.N., Nesterov, Yu.: Accelerated Regularized Newton Methods for Minimizing Composite Convex Functions. SIAM Journal on Optimization \textbf{29}, 77--99 (2019)

\bibitem{GN3} Grapiglia, G.N, Nesterov, Yu.: Tensor Methods for Minimizing Convex Functions with H\"{o}lder Continuous Higher-Order Derivatives. To appear in SIAM Journal on Optimization.

\bibitem{GN4} Grapiglia, G.N., Nesterov, Yu.: On inexact solution of auxiliary problems in tensor methods for convex optimization. Optimization Methods and Software (2020). https://doi.org/10.1080/10556788.2020.1731749

\bibitem{JIA} Jiang, B., Lin, T., Zhang, S.: A Unified Adaptive Tensor Approximation Scheme to Accelerated Composite Convex Optimization. arXiv: 1811.02427v2 [math. OC] (2020)

\bibitem{JIA1} Jiang, B., Wang, H., Zhang, S.: An optimal high-order tensor method for convex optimization. arXiv: 1812.06557v3 [math O.C.] (2020)

\bibitem{LU} Lu, H., Freund, R.M., Nesterov, Yu.: Relatively Smooth Convex Optimization by First-Order Methods, and Applications. SIAM Journal on Optimization \textbf{28}, 333--354 (2018)

\bibitem{Mart}
Mart\'inez, J.M.: On high-order model regularization for
constrained optimization. SIAM Journal on Optimization \textbf{27}, 2447--2458 (2017)

\bibitem{MS} Monteiro, R.D.C., Svaiter, B.F.: An accelerated hybrid proximal extragradient method for convex optimization and its implications to second-order methods. SIAM Journal on Optimization \textbf{23}, 1092--1125 (2013)

\bibitem{NEM} A. Nemirovskii, D. Yudin.: Problem complexity and method efficiency in optimization. John Wiley \& Sons, New York (1983)

\bibitem{NES7} Nesterov, Yu., Nemirovskii, A.: Interior point polynomial methods in convex programming: Theory and Applications, SIAM, Philadelphia (1994)

\bibitem{NP} Nesterov, Yu, Polyak, B.T.: Cubic regularization of Newton method and its global performance. Mathematical Programming \textbf{108}, 177-205 (2006)

\bibitem{NES3} Nesterov, Yu.: Accelerating the cubic regularization of Newton's method on convex problems. Mathematical Programming \textbf{112}, 159-181 (2008)


\bibitem{NES8} Nesterov, Yu.: How to make gradients small. Optima \textbf{88}, 10--11 (2012)


\bibitem{NES6} Nesterov, Yu.: Implementable tensor methods in unconstrained convex optimization. Mathematical Programming (2019). https://doi.org/10.1007/s10107-019-01449-1

\bibitem{NES10} Nesterov, Yu.: Inexact accelerated high-order proximal-point methods. CORE Discussion Paper 2020/08

\bibitem{NES11} Nesterov, Yu.: Inexact high-order proximal-point methods with auxiliary search procedure. CORE Discussion Paper 2020/10

\bibitem{NES9} Rodomanov, A., Nesterov, Yu.: Smoothness Parameter of Power of Euclidean Norm. Journal of Optimization Theory and Applications \textbf{185}, 303--326 (2020)

\bibitem{SCH} Schnabel, R.B., Chow, T.: Tensor methods for unconstrained optimization using second derivatives. SIAM Journal on Optimization \textbf{1}, 293--315 (1991)


\end{thebibliography}
\end{document}